
\documentclass[11pt,reqno,tbtags,a4paper]{amsart}
\usepackage{amssymb}
\usepackage{xpunctuate}
\usepackage{url}
\usepackage[square,numbers]{natbib}
\bibpunct[, ]{[}{]}{;}{n}{,}{,}

\title
{Tree limits and limits of random trees}

\date{28 May, 2020}

\author{Svante Janson}
\thanks{Supported by the Knut and Alice Wallenberg Foundation}
\address{Department of Mathematics, Uppsala University, PO Box 480,
SE-751~06 Uppsala, Sweden}
\email{svante.janson@math.uu.se}
\newcommand\urladdrx[1]{{\urladdr{\def~{{\tiny$\sim$}}#1}}}
\urladdrx{http://www2.math.uu.se/~svante/}

\keywords{tree limits; Galton--Watson trees; simply generated trees;
split trees; Crump--Mode--Jagers branching processes}
\subjclass[2010]{60C05, 
05C05; 
05C12, 
60J80. 
} 

\overfullrule 0pt 


\numberwithin{equation}{section}

\renewcommand\le{\leqslant}
\renewcommand\ge{\geqslant}

\allowdisplaybreaks






\theoremstyle{plain}
\newtheorem{theorem}{Theorem}[section]
\newtheorem{lemma}[theorem]{Lemma}

\newtheorem{corollary}[theorem]{Corollary}

\theoremstyle{definition}

\newtheorem{exampleqqq}[theorem]{Example}
\newenvironment{example}{\begin{exampleqqq}}
  {\hfill\qedsymbol\end{exampleqqq}}

\newtheorem{remarkqqq}[theorem]{Remark}
\newenvironment{remark}{\begin{remarkqqq}}
  {\hfill\qedsymbol\end{remarkqqq}}

\newtheorem{definition}[theorem]{Definition}

\theoremstyle{remark}

\newenvironment{romenumerate}[1][-10pt]{
\addtolength{\leftmargini}{#1}\begin{enumerate}
 \renewcommand{\labelenumi}{\textup{(\roman{enumi})}}%
 }{\end{enumerate}}

\newenvironment{alphenumerate}[1][-10pt]{
\addtolength{\leftmargini}{#1}\begin{enumerate}
 \renewcommand{\labelenumi}{\textup{(\alph{enumi})}}%
 }{\end{enumerate}}

\newenvironment{PXenumerate}[1]{
\begin{enumerate}
 \renewcommand{\labelenumi}{\textup{(#1\arabic{enumi})}}%
 }{\end{enumerate}}

\newcounter{oldenumi}
{\setcounter{oldenumi}{\value{enumi}}
\begin{romenumerate} \setcounter{enumi}{\value{oldenumi}}}
{\end{romenumerate}}

\newcounter{thmenumerate}
\newenvironment{thmenumerate}
{\setcounter{thmenumerate}{0}%
 \def\item{\par
 \refstepcounter{thmenumerate}\textup{(\roman{thmenumerate})\enspace}}
}
{}


\newcounter{steps}
\newcommand\stepx{\smallskip\noindent\refstepcounter{steps}%
 \emph{Step \arabic{steps}. }\noindent}

\newcommand{\refT}[1]{Theorem~\ref{#1}}
\newcommand{\refTs}[1]{Theorems~\ref{#1}}
\newcommand{\refC}[1]{Corollary~\ref{#1}}

\newcommand{\refL}[1]{Lemma~\ref{#1}}
\newcommand{\refLs}[1]{Lemmas~\ref{#1}}
\newcommand{\refR}[1]{Remark~\ref{#1}}

\newcommand{\refS}[1]{Section~\ref{#1}}
\newcommand{\refSs}[1]{Sections~\ref{#1}}
\newcommand{\refSS}[1]{Section~\ref{#1}}

\newcommand{\refD}[1]{Definition~\ref{#1}}
\newcommand{\refDs}[1]{Definitions~\ref{#1}}
\newcommand{\refE}[1]{Example~\ref{#1}}
\newcommand{\refEs}[1]{Examples~\ref{#1}}


\newcommand\XREM[1]{\relax}

\begingroup
  \count255=\time
  \divide\count255 by 60
  \count1=\count255
  \multiply\count255 by -60
  \advance\count255 by \time
  \ifnum \count255 < 10 \xdef\klockan{\the\count1.0\the\count255}
  \else\xdef\klockan{\the\count1.\the\count255}\fi
\endgroup

\newcommand\nopf{\qed}   



\newcommand{\sumko}{\sum_{k=0}^\infty}

\newcommand{\sumi}{\sum_{i=1}^\infty}
\newcommand{\sumj}{\sum_{j=1}^\infty}
\newcommand{\sumk}{\sum_{k=1}^\infty}
\newcommand{\suml}{\sum_{\ell=1}^\infty}

\newcommand{\sumiN}{\sum_{i=1}^N}

\newcommand{\sumkn}{\sum_{k=1}^n}

\newcommand\set[1]{\ensuremath{\{#1\}}}
\newcommand\bigset[1]{\ensuremath{\bigl\{#1\bigr\}}}

\newcommand\xpar[1]{(#1)}
\newcommand\bigpar[1]{\bigl(#1\bigr)}
\newcommand\Bigpar[1]{\Bigl(#1\Bigr)}

\newcommand\sqpar[1]{[#1]}
\newcommand\bigsqpar[1]{\bigl[#1\bigr]}
\newcommand\Bigsqpar[1]{\Bigl[#1\Bigr]}

\newcommand\xcpar[1]{\{#1\}}

\newcommand\bigabs[1]{\bigl\lvert#1\bigr\rvert}

\newcommand\lrabs[1]{\left\lvert#1\right\rvert}
\def\rompar(#1){\textup(#1\textup)}    
\newcommand\xfrac[2]{#1/#2}

\newcommand\Bigparfrac[2]{\Bigpar{\frac{#1}{#2}}}

\def\xexp(#1){e^{#1}}
\newcommand\ceil[1]{\lceil#1\rceil}

\newcommand\floor[1]{\lfloor#1\rfloor}

\newcommand\ntoo{\ensuremath{{n\to\infty}}}

\newcommand\ktoo{\ensuremath{{k\to\infty}}}

\newcommand\ttoo{\ensuremath{{t\to\infty}}}

\newcommand\bmin{\wedge}

\newcommand\punkt{\xperiod}    
\newcommand\iid{i.i.d\punkt}    
\newcommand\ie{i.e\punkt}
\newcommand\eg{e.g\punkt}

\newcommand\cf{cf\punkt}
\newcommand{\as}{a.s\punkt}
\newcommand{\textas}{\text{a.s.}}

\newcommand\whp{w.h.p\punkt}

\newcommand\ii{\mathrm{i}}

\newcommand{\tend}{\longrightarrow}
\newcommand\dto{\overset{\mathrm{d}}{\tend}}
\newcommand\pto{\overset{\mathrm{p}}{\tend}}
\newcommand\asto{\overset{\mathrm{a.s.}}{\tend}}
\newcommand\eqd{\overset{\mathrm{d}}{=}}

\newcommand\op{o_{\mathrm p}}

\newcommand\bbR{\mathbb R}
\newcommand\bbC{\mathbb C}
\newcommand\bbN{\mathbb N}

\newcommand\bbZ{\mathbb Z}

\newcounter{CC}
\newcommand{\CC}{\stepcounter{CC}\CCx} 
\newcommand{\CCx}{C_{\arabic{CC}}}     
\newcommand{\CCdef}[1]{\xdef#1{\CCx}}     
\newcommand{\CCname}[1]{\CC\CCdef{#1}}    
\newcounter{cc}

\newcommand\E{\operatorname{\mathbb E{}}}
\renewcommand\P{\operatorname{\mathbb P{}}}

\newcommand\Var{\operatorname{Var}}

\newcommand\Po{\operatorname{Po}}

\newcommand\Ge{\operatorname{Ge}}

\newcommand\diam{\operatorname{diam}}

\newcommand\ga{\alpha}
\newcommand\gb{\beta}
\newcommand\gd{\delta}
\newcommand\gD{\Delta}
\newcommand\gf{\varphi}
\newcommand\gam{\gamma}

\newcommand\kk{\kappa}
\newcommand\gl{\lambda}

\newcommand\go{\omega}

\newcommand\gs{\sigma}

\newcommand\gss{\sigma^2}
\newcommand\gth{\theta}
\newcommand\eps{\varepsilon}
\newcommand\ep{\varepsilon}

\newcommand\cA{\mathcal A}

\newcommand\cE{\mathcal E}
\newcommand\cF{\mathcal F}
\newcommand\cG{\mathcal G}

\newcommand\cK{\mathcal K}
\newcommand\cL{{\mathcal L}}

\newcommand\cN{\mathcal N}

\newcommand\cP{\mathcal P}

\newcommand\cS{{\mathcal S}}
\newcommand\cT{{\mathcal T}}

\newcommand\cV{\mathcal V}

\newcommand\cZ{{\mathcal Z}}

\newcommand\indic[1]{\boldsymbol1\xcpar{#1}}

\newcommand\qw{^{-1}}
\newcommand\qww{^{-2}}
\newcommand\qq{^{1/2}}

\newcommand\intoo{\int_0^\infty}

\newcommand\oi{\ensuremath{[0,1]}}

\newcommand\ooo{[0,\infty)}

\newcommand\ddx{\mathrm{d}}

\newcommand{\mgf}{moment generating function}

\newcommand{\gsf}{$\gs$-field}

\newcommand\lhs{left-hand side}
\newcommand\rhs{right-hand side}

\newcommand\GW{Galton--Watson}
\newcommand\GWt{\GW{} tree}
\newcommand\cGWt{conditioned \GW{} tree}
\newcommand\GWp{\GW{} process}

\newcommand\xoo{_1^\infty}

\newcommand\Moo{M_\infty}
\newcommand\rhooo{\rho_\infty}
\newcommand\tauoo{\tau_\infty}
\newcommand\push[2]{#1(#2)}
\newcommand\nn{^{(n)}}
\newcommand\nnx[1]{^{(#1)}}
\newcommand\nnga{\nnx{\ga}}
\newcommand\xnn{_n}
\newcommand\xnu{\gl}
\newcommand\cpmoo{\cP(\Moo)}
\newcommand\Ce{C_\eps}
\newcommand\Q[1]{\mathbf{#1}}
\newcommand\QT{\Q{T}}
\newcommand\Qd{\Q{d}}
\newcommand\Qmu{\Q{\mu}}
\newcommand\Qx{\Q{x}}

\newcommand\fD{\mathfrak D}
\newcommand\fT{\mathfrak T}
\newcommand\rT{\hat T}
\newcommand\fTf{\mathfrak T_{\mathsf f}}
\newcommand\fTr{\mathfrak T_{\mathsf r}}
\newcommand\cTf{\mathcal T_{\mathsf f}}
\newcommand\cTr{\mathcal T_{\mathsf r}}
\newcommand\cTc{\cT_{\mathsf c}}
\newcommand\OO{\Upsilon}
\newcommand\ET{Elek--Tardos}
\newcommand\ETn{Elek--Tardos normalization}
\newcommand\Bex{{\mathbf e}}
\newcommand\Tx{T}
\newcommand\cTex{\Tx_{2\Bex}}
\newcommand\BCRT{Brownian continuum random tree}

\newcommand\GHP{Gromov--Hausdorff--Prohorov}
\newcommand\GP{Gromov--Prohorov}
\newcommand\ddd{\gd} 
\newcommand\DDD{\gD} 
\newcommand\ctn{\cT_n}
\newcommand\ctt{\ctx{t}}
\newcommand\ctx[1]{\tilde\cT_{#1}}
\newcommand\vvv{v^\dagger} 

\newcommand\ttt{{\mathbf{t}}}
\newcommand\httt{{\mathbf{\hat t}}}
\newcommand\TT{\mathbf{T}}
\newcommand\TTx{\mathbf{T'}}
\newcommand\hv{\hat v}
\newcommand\Ezeta{\kk}
\newcommand\xW{\overline W}
\newcommand\TTK{\TT_K}
\newcommand\TTKK{\TT_{>K}}

\newcommand\bbNo{\bbN_0}
\newcommand\Tbx{\mathrm{T}}
\newcommand\Tb{\Tbx_b}
\newcommand\Too{\Tbx_{\infty}}
\newcommand\sumiob{\sum_{i=0}^b}
\newcommand\sumib{\sum_{i=1}^b}
\newcommand\hN{\widehat N}
\newcommand\VV{\widehat V}
\newcommand\cEp{\cE_+}
\newcommand\cEm{\cE_-}

\newcommand\ZZB{\cZ^\mathsf{b}}
\newcommand\ZZV{\cZ}

\newcommand\ZZVL{\ZZV_{\le L}}
\newcommand\dx{d}
\newcommand\hmu{\widehat\mu}
\newcommand\xix{\bar\xi}
\newcommand\Xia{\widehat\Xi(\ga)}
\newcommand\bb{b}
\newcommand\YY{Y}



\newcommand{\Lovasz}{Lov\'asz}

\hyphenation{Upp-sala}


\begin{document}

\begin{abstract} 
We explore the tree limits recently defined by Elek and Tardos.
In particular, we find tree limits for many classes of random trees.
We give general theorems for three classes of
conditional Galton--Watson trees and simply generated trees, 
for split trees and generalized split trees (as defined here),
and for trees defined by a continuous-time branching process.
These general results include, for example, 
random labelled trees, ordered trees,
random recursive trees, 
preferential attachment trees, 
and
binary search trees.
\end{abstract}

\maketitle

\section{Introduction}\label{S:intro}

\citet{ET} have recently introduced a theory of tree limits, in
analogy with the theory of graph limits \cite{Lovasz} and other similar
limits of various combinatorial objects 
(\eg{} hypergraphs, permutations, \dots).
Their idea is to regard a tree as a metric space with a probability measure;
the metric is the usual graph distance, suitably rescaled, and the probability
measure is the uniform  measure on the vertices.
Then,  for each integer $r$, consider the
random matrix $(d(\xi_i,\xi_j))_{i,j=1}^r$ of distances between $r$ random
vertices 
$\xi_1,\dots,\xi_r$. A sequence of trees is said to converge if, for each
$r\ge1$, the resulting random $r\times r$ matrices converge in distribution.
(This type of convergence  for metric spaces with a measure 
goes back to \citet[Chapter $3\frac12$]{Gromov}.)
See
\refS{SET} for details of this and of other topics mentioned below. 

\citet{ET} choose to normalize the metrics of the trees by dividing
the graph distance by the diameter; hence the trees become metric spaces
with diameter 1. 
One reason for this normalization is that this embeds the trees in a compact
space, and thus every sequence of trees has a convergent subsequence.
However, the theory developed in \cite{ET} treats also
more general real trees, and include trees with a different normalization.
We will in general not  use the \ETn, since other scalings
often seem more natural, in particular for random trees,
see \eg{} \refEs{ESn} and \ref{EBn}, and
\refSs{SGW1} and \ref{SGW2}.

The main results by \citet{ET} are that there exists 
a set of limit objects
called \emph{dendrons} such that each convergent sequence of finite trees
(with their normalization) converges to a unique dendron.
The dendrons can be regarded as real trees equipped with probability
measures, 
but the precise definition is slightly different.
The dendrons are defined as special cases of \emph{long dendrons}; dendrons
are long dendrons with diameter at most 1. This is tied to the
\ETn, and in the present paper, the main limit objects
are the long dendrons.

In some cases, the (long) dendrons can be identified with real trees, and
the tree limits then  coincide with 
limits in the \GP{} metric
(see \refR{RGP}). Such limits have been studied earlier (also in
the stronger \GHP{} metric); one much studied example 
going back to \citet{AldousI,AldousII,AldousIII}
is provided by \cGWt{s}, 
see \refS{SGW1}.
However the \ET{} limits are more general, and include also other types of
limits; one
 example is provided by a different class of \cGWt{s} (where condensation
 appears in the limit),  see \refS{SGW2}.

The tree limits by \citet{ET} thus seem to be very interesting, and
promising for future research.
The purpose of the present paper is to further develop the theory of tree
limits.
We give some general results in \refSs{SET}--\ref{Scompact}.
In particular, 
we show how  the set of all tree limits, or equivalently the set
of all long dendrons, can be regarded as a  metric (and Polish) space;
this makes it possible to define and study random tree limits and limits of
random trees in a convenient way.
Moreover, we characterize relative
compactness of a sequence (or set) of rescaled trees
(\refT{TC} and \refC{CC}).

The second, and perhaps main, part of the paper
applies the general theory to several classes of random trees
and finds tree limits for them.
As a preparation, we give in \refS{Sex} some simple examples of limits of
deterministic trees.
A few general results on limits of random trees are given in \refS{SRT}.

The following sections study first different classes of 
\cGWt{} and simply generated trees
(\refSs{SGW1}--\ref{SIII}),
and then different classes of random trees with logarithmic height
(\refSs{Slog}--\ref{SCMJ}), in particular
split trees (\refS{Ssplit}) 
and trees defined by continuous time branching processes (\refS{SCMJ}).
We find tree limits in all these cases, as the size \ntoo; in some cases 
with convergence in distribution to a random tree limit, and in others with
convergence in probability to a fixed tree limit.

The found limits are of different types. In particular, for a class of
\cGWt{s} including many standard classes of random trees with height of
order $\sqrt n$ (\refS{SGW1}), the well-known limit theorem by
\citet{AldousIII} gives convergence to a random real tree known as the
  the \BCRT; this tree can be regarded as a (random) long dendron, and 
Aldous's result holds in the present sense too.

On the other hand, many standard classes of random trees with height of
order $\log n$ are covered by the general results in
\refSs{Slog}--\ref{SCMJ} and have tree limits of a quite different type;
these limits are long dendrons of a very simple type (but distinct from real
trees), which is equivalent to the fact that in these trees, almost all
pairs of vertices have almost the same distance. (This is shown more
generally in \refT{TOOX}.)

\begin{remark}
  The tree limits by \citet{ET} studied in the present paper are global
  limits, in general quite different from local limits studied in \eg{} 
  \cite{SJ264}. Nevertheless, there are cases (see for example \refS{SGW2})
  where the trees are such that there is a strong relation between 
the tree limits and local limits.
\end{remark}

\section{Some notation}\label{Sprel}

\subsection*{Probability measures}

Recall that a Polish space is a separable completely metrizable topological
space. In other words, it can be regarded as a complete separable metric
space, but we ignore the metric. (When necessary or convenient, we can
choose a metric, but there is no distinguished one.)
A Polish space is often regarded as a measurable space, equipped with its
Borel \gsf.

If $X=(X,\cF)$ is a  measurable space, 
then $\cP(X)$ is the space of probability measures on $X$.
In particular,
if $X$
is a metric space, then $\cP(X)$ is the space of Borel
probability measures on $X$, and in this case we
equip $\cP(X)$ with the standard weak topology, see
\eg{} \cite{Billingsley}.
If $X$ is a Polish space, then so is $\cP(X)$,
see 
\cite[Appendix III]{Billingsley} or
\cite[Theorem 8.9.5]{Bogachev}.

The Dirac measure (unit point mass)  at a point $x$ is denoted $\gd_x$.

If $\mu$ is a probability measure on a space $X$, then $\xi\sim\mu$ and
$\mu=\cL(\xi)$ both denote that $\xi$ is a random element of $X$ with
distribution $\mu$.

If $X=(X,\cF)$ and $Y=(Y,\cG)$ are measurable spaces, 
$\gf:X\to Y$ is a measurable map, and 
$\mu\in\cP(X)$, then the \emph{push-forward} 
$\push{\gf}{\mu}\in\cP(Y)$ of $\mu$
is defined by
\begin{align}\label{push}
\push{\gf}{\mu}(A):=\mu\bigpar{\gf\qw(A)},\qquad A\in\cG.
\end{align}
(This is often denoted $\mu\circ\gf\qw$
or $\gf_*(\mu)$.)
Equivalently, if $\xi$ is a random element of $X$  
then 
\begin{align}\label{push2}
\xi\sim\mu 
\implies
\gf(\xi)\sim\push{\gf}{\mu}.  
\end{align}

\subsection*{Limits}

Unspecified limits are as \ntoo.

As usual, {\whp} (\emph{with high probability})
means with probability tending to 1 as a parameter (here always $n$) tends
to $\infty$.

If $Z,Z_n$ are random elements of a metric space $X$, then 
$Z_n\dto Z$, $Z_n\pto Z$, and $Z_n\asto Z$ 
denote convergence in distribution, in probability and almost surely (\as),
respectively. 
Note that $Z_n\dto Z$  is the same as
convergence in $\cP(X)$ of the distributions, \ie, $\cL(Z_n)\to \cL(Z)$.

If $(a_n)_n$ is a sequence of positive numbers, then $\op(a_n)$ denotes a
sequence of random variables  $Z_n$ such that $Z_n/a_n\pto0$; 
this is equivalent to $|Z_n|/a_n<\eps$ \whp{} for every $\eps>0$.

\subsection*{Miscellaneous}

If $T$ is a tree, we abuse notation and write $T$ for its vertex set $V(T)$.
The number of vertices is denoted by $|T|$.
If $T$ is a rooted tree, then the root is denoted by $o$.

If $x,y\in\bbR$, then $x\bmin y:=\min\set{x,y}$.
On the other hand, if $v$ and $w$ are vertices in a rooted tree, then
$v\bmin w$ denotes their last common ancestor.

For a sequence of random variables, \iid{} means  
independent and identically distributed.

$\bbN:=\set{1,2,\dots}$ and $\bbNo:=\set{0,1,2,\dots}$.

$C$ and $c$ denote positive constants; these may vary from one occurrence to
another. (We sometimes distinguish them by subscripts.)

\section{Convergence of trees and long dendrons}\label{SET}

We give here a summary of the main definitions and results of \cite{ET},
together with some further notation.

\subsection{Convergence of trees}\label{SStrees}
For $r\ge1$, let $M_r$ be the space of real $r\times r$ matrices;
note that $M_r=\bbR^{r^2}$ is a Polish space, and thus
$\cP(M_r)$ is a Polish space.

For a set $X$ with a given function $d:X^2\to\bbR$, and $r\ge1$, let
$\rho_r:X^r\to M_r$ be the map given by the entries
\begin{align}\label{rhor}
  \rho_r(x_1,\dots,x_r)_{ij}=
  \rho_r(x_1,\dots,x_r;X,d)_{ij}:=
  \begin{cases}
    d(x_i,x_j),& i\neq j,
\\
0, & i=j.
  \end{cases}
\end{align}
We often consider $\rho_r$ when $d$ is a metric on $X$; then the special
definition in \eqref{rhor}
when $i=j$ is redundant. However, for the long dendrons defined below, we 
typically have $d(x,x)>0$, and then the definition \eqref{rhor} is important.
See also \refR{Rd-t}.

A \emph{metric measure space} is a triple $(X,d,\mu)$, where $X$ is a
measurable space (so $X=(X,\cF)$ with $\cF$ hidden in the notation),
$\mu\in\cP(X)$, and $d:X^2\to\bbR$ is a measurable metric on $X$. 

Suppose, more generally, that
$X=(X,\cF,\mu)$ is a probability space and that $d:X^2\to\bbR$ is a
measurable function.  For $r\ge1$, define
the sampling measure
\begin{align}\label{taur}
  \tau_r(X)=\tau_r(X,d,\mu):=\push{\rho_r}{\mu^r}\in \cP(M_r),
\end{align}
the push-forward of the measure
$\mu^r\in \cP(X^r)$ along $\rho_r$. In other words,
if $\xi_1,\dots,\xi_r$ are 
\iid{}
 random points in $X$ with $\xi_i\sim\mu$,
then
\begin{align}\label{taurus}
  \tau_r(X):=\cL\bigpar{\rho_r(\xi_1,\dots,\xi_r;X)}, 
\end{align}
the distribution of  the random matrix 
$\rho_r(\xi_1,\dots,\xi_r)\in M_r$.

A finite tree $T$ is regarded as a metric space $(T,d_T)$, where $d_T$ is the
graph distance. Furthermore, if $c>0$, we let $cT$ denote the metric space
$(T,cd_T)$, where all distances are rescaled by $c$.
We regard $cT$ as a metric probability space by equipping it with the uniform
measure $\mu_T$ defined by $\mu_T\set{x}=1/|T|$ for $x\in T$.
Then $\tau_r(cT)\in\cP(M_r)$ is defined by \eqref{taur}.

\begin{definition}  \label{D1}
Let $(T_n)\xoo$ be a sequence of finite trees and $(c_n)\xoo$ a sequence of
positive numbers.
Then the sequence $(c_nT_n)\xoo$ converges if the sampling measures
converge for every fixed $r$, \ie, if there exist $\xnu_r\in \cP(M_r)$ such
that,
as \ntoo,
\begin{align}\label{d1}
\tau_r(c_nT_n)=\tau_r\bigpar{T_n,c_nd_{T_n},\mu_{T_n}}\to \xnu_r  
\qquad\text{in $\cP(M_r)$, $r\ge1$}. 
\end{align}
\end{definition}
By \eqref{taurus}, the condition \eqref{d1} is equivalent to convergence in
distribution of the random matrices
$\rho_r(\xi\nn_1,\dots,\xi\nn_r;c_nT_n)$, 
where for each $n$, $(\xi\nn_i)_i$
are \iid{} uniform random vertices of $T_n$.

\begin{remark}\label{Rdiam}
  As said in the introduction, \citet{ET} consider only the
  normalization $c_n=1/\diam(T_n)$, but we will not assume this.
\end{remark}

A \emph{real tree} is a 
complete
non-empty metric space $(T,d)$ such that
for any pair of distinct points $x,y\in T$, there exists a unique isometric
map $\ga:[0,d(x,y)]\to T$ with $\ga(0)=x$ and $\ga(d(x,y))=y$, 
and furthermore, for every $s\in(0,d(x,y))$, $x$ and $y$ are in different
components of $T\setminus\set{\ga(s)}$.
(There are several different but equivalent versions of the definition; see
\eg{} \cite{Dress,DressMoultonT,LeGall2005,LeGall2006}.)

\begin{remark}
  Note that we define the trees as complete (as do \cite{ET}); this is often
  not required. For our purposes completeness is convenient and no real loss of
    generality; if $T$ is an incomplete real tree
(defined as above without completeness), then the completion $\overline T$
is also a real tree (see \eg{} \cite[Theorem 8]{Dress}), and 
in the limit theory below we can use $\overline T$ instead of $T$.
\end{remark}

If $T=(T,d)$ is a real tree and $c>0$, let $cT:=(T,cd)$. Then $cT$
is also a real tree. 

A \emph{measured real tree} is a real tree $T=(T,d)$
equipped with a probability measure $\mu$. 
We will only consider separable trees $T$ and 
Borel measures $\mu$, and then 
$(T,d,\mu)$ is always a metric measure space. 
(For non-separable measured real trees, see \cite{ET},
where they \eg{} are  used in the proofs;
then $\mu$ might be defined on a smaller \gsf{} than the Borel one, and  the
condition that $d$ has to be measurable is added.
See also  \refR{Rd-t}.)

\begin{example}\label{Ereal}
If $T$ is any finite tree (in the usual combinatorial sense), let $\rT$
denote the 
real tree obtained by regarding each edge in $T$ is an interval of length 1.
Then $\rT$ is a compact real tree, and $T$ is isometrically embedded as a
subset of $\rT$.
Hence, we can regard $\mu_T$ as a probability measure on
$\rT$, and $(\rT,\mu_T)=(\rT,d,\mu_T)$ is a measured real tree.
Obviously,
$\tau_r(T)=\tau_r(\rT,\mu_T)$.
 More generally, $cT$ is isometrically embedded in $c\rT$
for any $c>0$, and
\begin{align}\label{ctt}
 \tau_r(cT)=\tau_r(c\rT,\mu_T). 
\end{align}
We can therefore sometimes identify $\rT$ and $T$;
see \refS{Sabs}.
\end{example}

Consequently, we can regard \refD{D1} as a special case of the following
definition. 

\begin{definition}  \label{D2}
Let $(T_n)\xoo=(T_n,d_n,\mu_n)\xoo$ 
be a sequence of measured real trees.
Then the sequence $(T_n)\xoo$ converges if the sampling measures
converge for every fixed $r$, \ie, if there exist $\xnu_r\in \cP(M_r)$ such
that,
as \ntoo,
\begin{align}\label{d2}
\tau_r(T_n)  
\to \xnu_r  
\qquad\text{in $\cP(M_r)$, $r\ge1$}. 
\end{align}
Again, \eqref{d2} 
is equivalent to convergence in
distribution of 
the random matrices
$\rho_r(\xi\nn_1,\dots,\xi\nn_r;T_n)$, where, for each $n$, 
$(\xi_i\nn)_i$ are \iid{}  random points in $T_n$
with  $\xi\nn_i\sim\mu_n$.
\end{definition}

\begin{remark}
  \label{RGP}
\citet[Chapter $3\frac12$]{Gromov} studied general complete separable metric
measure spaces (with a finite Borel measure, which we may normalize to be a
probability measure as above). He defined the \GP{} metric 
(see \citet[p.~762]{Villani} for another version), and he also considered
convergence in the sense above, \ie, $\tau_r(X_n)\to\tau_r(X)$ for every
$r$, where $X_n$ and $X$ are metric measure spaces; it turns out that this
is equivalent to convergence in the \GP{} metric, see \cite{SJN22}.
\citet[$3\frac12$.14 and $3\frac12$.18]{Gromov} noted also that
it is possible that $\tau_r(X_n)$ converges for every $r$ to some limit
measure, but that there is no metric measure space $X$ that is the limit.
(One of Gromov's examples is the sequence of unit spheres 
$S^n$ with uniform measure,
which behave as in \refT{TOO} below with almost all distances being almost
equal; a metric measure space limit would have to have almost all distances
equal to some positive constant, which is impossible for separable spaces.)
The new idea by \citet{ET} is to define another type of limit object (long
dendrons) that works in general when $X_n$ are trees.
\end{remark}

\subsection{Long dendrons}\label{SSdendron}
\citet{ET} defined  limit objects 
as follows.
Note that a real tree $T$ is locally connected (and locally pathwise connected);
thus, if $p\in T$, then $T\setminus\set{p}$ is the disjoint union of one or
several (possibly infinitely many) open connected components; these are
called \emph{$p$-branches}. A \emph{branch} of $T$ is a $p$-branch for some
$p\in T$.

\begin{definition}\label{DD}
  A \emph{long dendron} $D=(T,d,\nu)$ is a real tree $(T,d)$ together with a
  (Borel) probability measure $\nu$ on $A_D:=T\times\ooo$
satisfying $\nu(B\times\ooo)>0$ for every branch $B$ of $T$.
We define $d_D:A_D^2\to\ooo$ by
\begin{align}\label{dD}
  d_D\bigpar{(x,a),(y,b)}:=d(x,y)+a+b.
\end{align}

An isomorphism between two long dendrons $D=(T,d,\nu)$ and $D'=(T',d',\nu')$
is an isometry $f$ from $(T,d)$ onto $(T',d')$ such that the mapping 
$\bar f:=(p,a)\mapsto(f(p),a)$ is measure-preserving 
$(A_D,\nu)\to (A_{D'},\nu')$. 
\end{definition}

We call the real tree $T$ the \emph{base} of the long dendron $D$; 
we may identify
$T$ with $T\times\set0\subset A_D$.
It is shown in \cite[Lemma 6.2]{ET} that the base $T$ of a dendron
necessarily is separable; thus $T$ and $A_D$ are Polish spaces.

\begin{remark}\label{Rshort}
  \citet{ET} also define a \emph{dendron} as a long dendron such that 
if $\xi_1,\xi_2\in A_D$ are \iid{} random points with distribution $\nu$,
then $d_D(\xi_1,\xi_2)\le1$ a.s.
These are the limit objects for real trees with diameter $\le1$, and thus
for trees with the \ETn{} in \refR{Rdiam}, but they
have no special importance in the present paper.
We may call them \emph{short dendrons}.
(For consistency with \cite{ET}, we keep the name long dendron, although
for our purposes it would be more natural to change
notation and call them dendrons.)
\end{remark}

For a long dendron $D=(T,d,\nu)$, we use again \eqref{taur}--\eqref{taurus}
and define
the sampling measure
\begin{align}\label{bob}
  \tau_r(D):=\tau_r(A_D,d_D,\nu)\in\cP(M_r),
\end{align}
\ie, the distribution of  the random matrix 
$\rho_r(\xi_1,\dots,\xi_r; A_D,d_D)\in M_r$ if $\xi_1,\dots,\xi_r$ are 
\iid{}
random points in $A_D$ with $\xi_i\sim\nu$.

Convergence of finite or real trees
to a long dendron is defined by adding to \refDs{D1} and
\ref{D2} that the limits of the sampling measures are the sampling measures 
for the limit:
 \begin{definition}\label{D3}
 Let $(T_n)\xoo$ be a sequence of finite trees and $(c_n)\xoo$ a sequence of
positive numbers, and let $D$ be a long dendron.
Then the sequence $(c_nT_n)\xoo$ converges to $D$
if,
as \ntoo,
\begin{align}\label{d3a}
\tau_r(c_nT_n)\to\tau_r(D)
\qquad\text{in $\cP(M_r)$, $r\ge1$}. 
\end{align}
Similarly, if
$(T_n)\xoo$ is a sequence of real trees and  $D$  a long dendron,
then  $T_n$ converges to $D$
if,
as \ntoo,
\begin{align}\label{d3b}
\tau_r(T_n)\to\tau_r(D)
\qquad\text{in $\cP(M_r)$, $r\ge1$}. 
\end{align}
 \end{definition}

Again, \eqref{d3a} and \eqref{d3b} are equivalent to convergence in
distribution of the random matrices
$\rho_r(\xi\nn_1,\dots,\xi\nn_r)$, 
where $(\xi\nn_i)_i$ are \iid{} as above.

\begin{remark}\label{Rd-t}
As discussed in \cite[Remark 4]{ET},
the long dendrons could be replaced by real trees as follows.
(We might think of long dendrons as proxies for some measured real trees.)
Let $D=(T,d,\nu)$ be a long dendron.
First, if $\nu\set{t}=0$ for every $t\in T$, 
consider $A_D=T\times\ooo$ as a real tree $T'$ consisting of
$T=T\times\set0$ 
with a half-line
$\set{t}\times\ooo$ attached at each $t\in T$.
In general, we have to attach a continuum of half-lines at each point $t$ 
(so that each half-line has measure 0), for example by defining
$T':=T\times\bbC$
regarded as $T$ with the half-lines
$\set{(t,re^{\ii\gth}):r\ge0}$ attached,
for every $t\in T$ and $\gth\in[0,2\pi)$,
and with the measure $\nu'$ on $T'$ equal to the push-forward 
by the map $(t,r,\gth)\mapsto(t,re^{\ii\gth})$ of the measure
$\nu\times\ddx\gth/2\pi$. 
Note that then $\tau_k(T')=\tau_k(D)$ for every $k\ge1$.
(Note how the special definition for $i=j$ in \eqref{rhor} interacts with
\eqref{dD} to give the desired result.)

However, we agree with \citet{ET} that it is more convenient to use long
dendrons as limit objects.
One reason is that the trees just constructed are nonseparable, and that the
measures are not Borel measures on $T'$. (There are plenty of nonmeasurable
open sets.) Another reason is that long dendrons provide uniqueness of the
limits in a simple way.
\end{remark}

\subsection{Two examples}\label{SS2ex}
The following examples of long dendrons are rather simple, and extreme in
the sense that the measure $\nu$ on $A_D=T\times \ooo$ is 
supported on a
'one-dimensional' set with one of the coordinates fixed. 
Nevertheless, these two examples will play the main role in our  limit
theorems for random trees.

\begin{example}\label{ED=T}
  Let $T=(T,d,\mu)$ be a 
measured real tree such that every branch has
positive measure.
Identify $T\times\set0$ with $T$, and define $\nu$ as $\mu$ regarded as a
measure on $T\times\set0\subset A_D:=T\times\ooo$. 
(More formally, $\nu$ is
the push-forward of $\mu$ under $x\mapsto(x,0)\in A_D$.)
Then $(T,d,\nu)$ is a long dendron. 

Note that if $\xi\sim\mu$, then $(\xi,0)\sim \nu$. 
Since \eqref{dD} implies that $d_D$
equals $d$ on $T\times\set0=T$, 
it follows that $\tau_r(D)=\tau_r(T)$ for every $r\ge1$.
We may thus identify the long dendron $D$ with the measured real tree $T$.

By \refR{RGP}, convergence of a sequence of (real) trees
to $D$ as in \refD{D3} is equivalent to
convergence to $T$ in the \GP{} metric.
\end{example}

\begin{example}\label{EOO}
  Let $T=\set \bullet$ be the real tree consisting of a single point. 
Then the metric $d=0$, 
and we let $\mu=\gd_\bullet$  (the only probability measure on $T$, so there
is no choice).

We may identify $A_D=\set{\bullet}\times\ooo$ with $\ooo$. Thus every
probability measure $\nu$ on $\ooo$ defines a long dendron
$\OO_\nu:=(T,d,\nu)$.

By \eqref{rhor} and \eqref{dD}, 
\begin{align}\label{pop}
\rho_r(\xi_1,\dots,\xi_r;\OO_\nu)
=\bigpar{(\xi_i+\xi_j)\indic{i\neq  j}}_{i,j=1}^r,
\end{align}
and thus $\tau_r(\OO_\nu)$ is the distribution of the matrix \eqref{pop}
when $(\xi_i)_i$ are \iid{} with $\xi_i\sim\nu$.

A particularly simple, and important, case is when $\nu=\gd_a$ for some
$a\ge0$. In this case we denote the long dendron by $\OO_a$, and note that
$\xi_i=a$ is non-random, and thus \eqref{pop} shows that
$\rho_r(\xi_1,\dots\xi_r)$ is the constant matrix
\begin{align}\label{mom}
\rho_r(\xi_1,\dots,\xi_r;\OO_a)
=\bigpar{2a\indic{i\neq  j}}_{i,j=1}^r.
\end{align}
This leads to the following simple characterization of convergence to the
long dendron $\OO_a$.
\end{example}

\begin{theorem}\label{TOO}
Let  $(c_nT_n)_n$ be a sequence of rescaled trees, and $a\ge0$.
Then $c_nT_n\to \OO_a$ if and only if
\begin{align}\label{too}
  c_nd_n(\xi\nn_1,\xi\nn_2)\pto 2a,
\end{align}
where $d_n$ is the graph distance in $T_n$ and
$(\xi\nn_i)_i$ are \iid{}
uniformly random vertices in $T_n$.

The same holds, mutatis mutandis, for a sequence $(T_n,d_n,\mu_n)$ of
measured real trees.  
\end{theorem}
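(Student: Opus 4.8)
The plan is to reduce the general statement to a direct computation with the sampling measures $\tau_r$, using the fact (from \refD{D3}) that $c_nT_n\to\OO_a$ means exactly $\tau_r(c_nT_n)\to\tau_r(\OO_a)$ in $\cP(M_r)$ for every $r\ge1$, and similarly in the measured-real-tree case. By \eqref{mom}, $\tau_r(\OO_a)$ is the point mass $\gd_{A_r}$ at the constant matrix $A_r:=\bigpar{2a\indic{i\neq j}}_{i,j=1}^r$. So the task is to show that $\tau_r(c_nT_n)\to\gd_{A_r}$ for all $r$ is equivalent to the single condition \eqref{too}.

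First I would prove necessity. Assume $c_nT_n\to\OO_a$. Taking $r=2$, the random matrix $\rho_2(\xi\nn_1,\xi\nn_2;c_nT_n)$ has off-diagonal entry $c_nd_n(\xi\nn_1,\xi\nn_2)$, and $\tau_2(c_nT_n)\to\gd_{A_2}$ means this matrix converges in distribution to the constant matrix $A_2$; convergence in distribution to a constant is the same as convergence in probability, which is precisely \eqref{too}. For sufficiency, assume \eqref{too}. Fix $r$ and let $(\xi\nn_i)_{i=1}^r$ be \iid{} uniform (resp.\ $\mu_n$-distributed). For each fixed pair $i\ne j$, the pair $(\xi\nn_i,\xi\nn_j)$ has the same distribution as $(\xi\nn_1,\xi\nn_2)$, so $c_nd_n(\xi\nn_i,\xi\nn_j)\pto 2a$ by \eqref{too}. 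There are only finitely many pairs, so $\max_{i\ne j}\bigabs{c_nd_n(\xi\nn_i,\xi\nn_j)-2a}\pto0$, \ie{} $\rho_r(\xi\nn_1,\dots,\xi\nn_r;c_nT_n)\pto A_r$ in $M_r$, hence $\dto A_r$, which says $\tau_r(c_nT_n)\to\gd_{A_r}=\tau_r(\OO_a)$. Since $r$ was arbitrary, \refD{D3} gives $c_nT_n\to\OO_a$.

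The only mild subtlety — and the place I would be slightly careful — is the reduction "the $r\times r$ matrices converge (in distribution) for all $r$" versus "the $2\times 2$ matrices converge": a priori the definition demands joint control of all $\binom r2$ entries, not just the marginal distribution of one entry. But the argument above handles this cleanly because \eqref{too} is a statement of convergence \emph{in probability}, and convergence in probability of each coordinate of a finite-dimensional random vector to a constant automatically gives joint convergence in probability (and hence in distribution) of the whole vector to the constant vector; no independence or uniform-integrability input is needed. The measured-real-tree case is verbatim the same, replacing the uniform measure on vertices by $\mu_n$ and $\tau_r(c_nT_n)$ by $\tau_r(T_n)$ throughout; since the statement in that case does not involve a rescaling, one simply drops $c_n$. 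I expect this to be the entire proof, with no genuine obstacle — the content is really just unwinding the definitions plus the elementary fact that distributional convergence to a constant equals convergence in probability.
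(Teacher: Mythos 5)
Your proposal is correct and follows essentially the same route as the paper's proof: both reduce \refD{D3} to entrywise convergence in probability via the fact that convergence in distribution to a constant is convergence in probability, and then use symmetry (exchangeability of the $\xi\nn_i$) to reduce to the single entry $(1,2)$. No gaps.
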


In the terminology of \citet[p.~142]{Gromov},
\eqref{too} says that $c_nT_n$ have  
\emph{(asymptotic) characteristic size} $2a$.

\begin{proof}
Convergence in distribution to a constant is the same as convergence in
probability. Thus, \eqref{taurus}, \eqref{rhor} and \eqref{mom} 
show that \refD{D3} now yields
\begin{align}\label{sw}
& c_nT_n\to\OO_a
\notag\\&\qquad
\iff 
\bigpar{c_nd_n(\xi\nn_i,\xi\nn_j)\indic{i\neq  j}}_{i,j=1}^r
\pto\bigpar{2a\indic{i\neq  j}}_{i,j=1}^r,
\quad r\ge1,
\notag\\&\qquad
\iff 
{c_nd_n(\xi\nn_i,\xi\nn_j)\indic{i\neq  j}}
\pto{2a\indic{i\neq  j}},
\quad i,j\ge1.
\end{align}
By symmetry, it suffices to consider the case $i=1$, $j=2$.
\end{proof}

\begin{remark}\label{ROO0}
  The (long) dendron $\OO_0$ is trivial, with $d_D(\xi_1,\xi_2)=0$ \as{} if
  $\xi_i\sim\nu=\gd_0$. Note that $\OO_0$ equals the equally trivial
real tree $T=\set \bullet$ 
consisting of a single point, regarded as a long dendron as in \refE{ED=T}.

The  trivial long dendron $\OO_0$ is by \refT{TOO} the limit 
of $c_nT_n$ when
\begin{align}
  \label{trivial}
c_nd_n(\xi\nn_1,\xi\nn_2)\pto0,
\end{align}
which typically means that we have chosen the wrong rescaling.
\end{remark}

\subsection{Limit theorems}\label{SSlimit}
Some of the main results of \citet{ET} are the following, 
here somewhat reformulated.

\begin{theorem}[{\cite[partly Theorems 1 and 4]{ET}}]\label{ET1}
Any convergent sequence of rescaled finite trees converges to some long dendron.
The same holds for any convergent sequence of measured real trees.
\end{theorem}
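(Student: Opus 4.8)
The plan is to obtain the limit as a reconstruction from the limiting sampling measures, in the spirit of Gromov's reconstruction of a metric measure space from the distributions of its finite distance matrices, with the long dendron supplying the ``limit object'' that an ordinary metric measure space fails to provide (\cf{} \refE{EOO} and \refR{RGP}). By \refE{Ereal} and \eqref{ctt} a rescaled finite tree $c_nT_n$ has the same sampling measures as the measured real tree $(c_n\rT_n,\mu_{T_n})$, so it suffices to treat a convergent sequence of measured real trees $(T_n,d_n,\mu_n)$, say with $\tau_r(T_n)\to\xnu_r$ for every $r$. The family $(\xnu_r)_r$ is consistent: $\rho_{r-1}$ is the top-left corner of $\rho_r$, so $\tau_{r-1}(T_n)$ is the push-forward of $\tau_r(T_n)$ under the coordinate projection $M_r\to M_{r-1}$, and this passes to the limit. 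By Kolmogorov's extension theorem there is then a random symmetric array $\Qd=(\Qd_{ij})_{i,j\ge1}$, with zero diagonal, whose $r\times r$ corner has law $\xnu_r$ for each $r$. Each $\tau_r(T_n)$ is supported on the closed set of symmetric $r\times r$ matrices with zero diagonal, nonnegative off-diagonal entries, and obeying the triangle inequality and the four-point condition (real-tree distances satisfy all of these, and restriction and rescaling preserve them); hence so is its weak limit $\xnu_r$, and therefore $\Qd$ \as{} has all these properties. Moreover each $\tau_r(T_n)$ is exchangeable, and the sub-arrays of $\rho_r$ indexed by disjoint sets of points are independent (the sample points are \iid); both properties survive weak limits, so $\Qd$ is \as{} an exchangeable \emph{dissociated} array.

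Next I would reconstruct a long dendron from $\Qd$. Put
\begin{align*}
 \ga_i:=\tfrac12\inf\bigset{\Qd_{ij}+\Qd_{ik}-\Qd_{jk}:\ j,k\ge1\ \text{with}\ i,j,k\ \text{distinct}}
\end{align*}
and $\dx(x_i,x_j):=\Qd_{ij}-\ga_i-\ga_j$ for $i\neq j$, $\dx(x_i,x_i):=0$. Using the four-point condition and the triangle inequality for $\Qd$ one checks that $\ga_i\ge0$ and that $\dx$ is a nonnegative symmetric function satisfying the triangle inequality and the four-point condition; hence the associated pseudo-metric space on $\set{x_1,x_2,\dots}$ embeds isometrically in an $\bbR$-tree, and we let $T$ be the closed subtree it spans, a separable real tree. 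The empirical measures $r\qw\sum_{i=1}^r\gd_{(x_i,\ga_i)}$ on $T\times\ooo$ converge \as{} to a Borel probability measure $\nu$, and $D:=(T,\dx,\nu)$ is a long dendron: the requirement $\nu(B\times\ooo)>0$ for every branch $B$ holds since $T$ is exactly the closed span of the support of the first marginal of $\nu$. The choice of $\ga_i$ is forced: it is the largest pendant length for which $\dx=\Qd_{ij}-\ga_i-\ga_j$ remains nonnegative, and collapsing this much of the distance into pendant edges is precisely what keeps the base $T$ separable when an ordinary metric measure space would not exist (as in \refE{EOO}).

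Finally, $d_D\bigpar{(x_i,\ga_i),(x_j,\ga_j)}=\dx(x_i,x_j)+\ga_i+\ga_j=\Qd_{ij}$, so if the marked points $(x_i,\ga_i)$ could be treated as \iid{} samples from $\nu$ we would get $\tau_r(D)=\xnu_r$ and the theorem would follow; and because $\Qd$ is dissociated its shell $\gs$-field is trivial, which both justifies this (via the de Finetti/Kingman theory for exchangeable arrays) and shows that the isomorphism class of $D$ is \as{} a \emph{fixed} long dendron, the desired limit. The main obstacle is exactly this last circle of ideas: passing from the purely combinatorial array $\Qd$ to an honest long dendron with a genuine Borel limit measure and independent-looking sample points, and checking that the reconstruction is deterministic and reproduces the $\xnu_r$ on the nose. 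Here the four-point condition is what forces the limit to be tree-like, and the splitting of each distance into a base part $\dx$ and pendant lengths $\ga_i$ is what the long dendron provides and the metric-measure-space formalism cannot.
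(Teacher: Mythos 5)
Your route is genuinely different from the paper's, and it is worth being clear about both facts. The paper does not attempt any reconstruction from the limiting sampling measures: it reduces \refT{ET1} to \cite[Theorem 4]{ET} by the ultraproduct machinery of \cite{ET}. Concretely, convergence of $\tau_2(T_n)$ gives tightness of the pairwise distance $d_n(\xi^{(n)}_1,\xi^{(n)}_2)$, hence points $x_n$ and sets $A_n$ with $\mu_n(A_n)\ge 1-\eps$ and uniformly bounded diameter; in the ultraproduct this produces a cluster of full measure, i.e.\ the sequence is \emph{essentially bounded}, and \cite[Theorem 4]{ET} then supplies the long dendron, the ultralimit of $\tau_r(T_n)$ being its actual limit by the convergence hypothesis. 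What you propose instead is a self-contained re-proof of the existence theorem of \citet{ET} itself, via Kolmogorov extension and exchangeable dissociated arrays. The preparatory part of your argument is sound: consistency of $(\xnu_r)_r$, extension to an array $\Qd$, exchangeability and dissociation, the fact that tree-like matrices (symmetry, zero diagonal, triangle and four-point conditions) form a closed set so that $\Qd$ is \as{} tree-like, and the observation that $\ga_i=\tfrac12\inf_{j,k}\bigpar{\Qd_{ij}+\Qd_{ik}-\Qd_{jk}}$ recovers the pendant lengths when the array really is sampled from a long dendron.

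The genuine gap is the step you yourself label the ``main obstacle'', and it is precisely where the whole difficulty of the theorem sits. You have no argument that the reconstructed marked points $(x_i,\ga_i)$ behave as \iid{} samples from a Borel probability measure $\nu$ on $T\times\ooo$: the \as{} convergence of the empirical measures $r\qw\sum_{i\le r}\gd_{(x_i,\ga_i)}$ is asserted, not proved, and note that $\ga_i$ is an infimum over \emph{all} other indices, so $(x_i,\ga_i)$ is not a function of the $i$th sample alone and the usual law-of-large-numbers/de Finetti argument does not apply directly; one must first show the infimum stabilizes and that the array is a measurable functional of conditionally \iid{} marked points. Likewise the identities $\tau_r(D)=\xnu_r$, the claim that the isomorphism class of $D$ is \as{} constant, and the branch-positivity of $\nu$ (needed for $D$ to be a long dendron; the presence of one sample point in a branch does not by itself give that branch positive limiting mass) are stated without proof, with only a gesture toward ``de Finetti/Kingman theory''. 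Carrying these out would amount to reproving \cite[Theorems 1, 3 and 4]{ET} by exchangeability methods -- possibly feasible and interesting, but not done here; as it stands the proposal establishes the setup and leaves the theorem itself unproved, whereas the paper's proof deliberately avoids this by quoting \cite{ET} and only verifying the essential-boundedness hypothesis.
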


This is not stated in quite this generality in \cite{ET}; we show in
\refS{SpfET1} 
how it follows from other results in \cite{ET}.
(We postpone this proof until the end of the paper because it uses arguments
from \cite{ET} quite different from the other arguments in the present paper.)

\begin{theorem}[{\cite[Theorem 2, Lemmas 7.1 and 7.2]{ET}}]\label{ET2}
  Any long dendron is the limit of a convergent sequence $(c_nT_n)\xoo$
of rescaled finite trees.
\nopf
\end{theorem}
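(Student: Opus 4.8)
The plan is to realize an arbitrary long dendron $D=(T,d,\nu)$ as a limit of rescaled finite trees by a two-step discretization: first approximate $D$ by a ``nice'' long dendron supported on a finite tree with finitely many half-lines, and then approximate that by rescaled finite combinatorial trees. For the first step, I would fix $\eps>0$ and choose a finite subtree $T_0\subseteq T$ (the convex hull of finitely many points of $T$) together with a finite Borel partition of $A_D=T\times\ooo$ into measurable pieces each of diameter $<\eps$ with respect to $d_D$; this is possible since $A_D$ is a separable metric space (the base $T$ is separable by \cite[Lemma 6.2]{ET}, and $\ooo$ is separable). Replacing each piece by a representative point and $\nu$ by the corresponding atomic measure $\nu_\eps$ supported on finitely many points $(t_k,a_k)\in T_0\times\ooo$, one gets a ``combinatorial'' long dendron whose sampling measures $\tau_r$ are within $O(r^2\eps)$ of $\tau_r(D)$ in, say, the Prohorov metric on $\cP(M_r)$, because $d_D$ changes by at most $2\eps$ under the perturbation and $\rho_r$ is Lipschitz in the entries.

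For the second step I would discretize such a combinatorial long dendron into an honest finite tree. Choose a large integer $N$. Rescale all distances in the finite base $T_0$ and all the half-line lengths $a_k$ to be (approximately) integer multiples of $1/N$; realize $N\cdot(\text{rescaled }T_0)$ as a finite combinatorial tree $S_0$ (subdividing each edge), and attach at the appropriate vertex of $S_0$, for each atom $(t_k,a_k)$ of mass $p_k$, a path of length $\approx Na_k$ with roughly $p_k\cdot|S_n|$ vertices placed at its far end (or spread along it with vanishing relative effect). Letting the total number of vertices $|S_n|\to\infty$ while $N\to\infty$ slowly, a uniformly random vertex of $S_n$ lands in the ``cluster'' attached to $(t_k,a_k)$ with probability $\to p_k$, and then its distance to the base vertex is $\approx a_k$ after rescaling by $c_n:=1/N$. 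Hence for $r$ fixed, the rescaled distance matrix $\rho_r(\xi\nn_1,\dots,\xi\nn_r;c_nS_n)$ converges in distribution to $\rho_r$ sampled from the combinatorial long dendron, i.e.\ $\tau_r(c_nS_n)\to\tau_r(\text{combinatorial }D)$; this is exactly the content of \eqref{taurus} and \refD{D3}.

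Combining the two steps via a diagonal argument over $\eps\downarrow0$ (with $N=N(\eps)\to\infty$ and $|S_n|\to\infty$ suitably): for each $\eps$ we obtain a finite sequence of rescaled trees whose $\tau_r$, $r\le1/\eps$, are within $\eps$ (in Prohorov distance on $\cP(M_r)$) of $\tau_r(D)$; concatenating these gives a single sequence $(c_nT_n)$ with $\tau_r(c_nT_n)\to\tau_r(D)$ for every $r\ge1$, which is convergence to $D$ in the sense of \refD{D3}. One must also keep $\nu_\eps(B\times\ooo)>0$ for every branch $B$, but since a genuine long dendron has $\nu(B\times\ooo)>0$ for each branch and $T_0$ can be taken to exhaust $T$, choosing the partition fine enough and including a representative in each relevant branch ensures this; and in the limit $D$ itself carries the branch-positivity, so no constraint is lost.

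The main obstacle is bookkeeping in the second step: one needs the attached clusters to have the right asymptotic mass $p_k$ \emph{and} negligible internal diameter relative to $c_n^{-1}=N$, i.e.\ the cluster at $(t_k,a_k)$ must be ``thin'' (a path of length $\sim Na_k$ with most of its mass concentrated near the tip, so two random points in the same cluster are at rescaled distance $\approx 0$, matching $d_D((t_k,a_k),(t_k,a_k))=2a_k$ versus the diagonal convention $\rho_r=0$ on the diagonal only — note here two \emph{distinct} random points both landing in cluster $k$ should have $d_D\approx 2a_k$, so in fact the mass should sit at the \emph{tips}, away from the base, which is automatic if the cluster is a bouquet of $\sim p_k|S_n|$ leaves each at distance $\sim Na_k$ from the attachment point). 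Getting these leaf-counts to be integers summing to $|S_n|$ while all the edge-subdivision lengths are simultaneously the right multiples of $1/N$ is the routine-but-delicate part; everything else is soft topology (separability, Prohorov approximation, Lipschitz continuity of $\rho_r$) plus the diagonal extraction. Since the statement is already established in \cite{ET}, one may alternatively just cite their construction; the above is the natural self-contained route.
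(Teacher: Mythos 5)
Your two-step plan (discretize the measure, then realize the discretized object by finite trees) is in the same spirit as the route the paper points to, but the paper itself gives no proof -- it only cites \cite[Lemmas 7.1 and 7.2]{ET} -- so the real question is whether your construction works, and the second step as you describe it does not. After your first step the approximating measure is purely atomic, and an atom $(t_k,a_k)$ with $a_k>0$ and mass $p_k$ forces two independent samples to hit it simultaneously with probability $p_k^2$, in which case their $d_D$-distance must be $2a_k$. Neither of your proposed realizations reproduces this. If the cluster is a single path of length $\approx Na_k$ carrying a huge bouquet of $\approx p_k|S_n|$ leaves at its tip, then two random vertices of the cluster are with high probability two leaves of the same bouquet, at graph distance $2$, hence rescaled distance $2/N\to0\neq 2a_k$; applied to $D=\OO_a$ (one atom, $a>0$) this construction converges to $\OO_0$, not $\OO_a$. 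If instead the cluster is $\approx p_k|S_n|$ disjoint legs of length $Na_k$, each ending in one leaf, then the leg vertices outnumber the tip leaves by a factor $\approx Na_k$, so a uniform vertex of the cluster sits at a uniform position along a leg and the radial law becomes (essentially) uniform on $[0,a_k]$ instead of the point mass at $a_k$ -- exactly the size-biasing effect visible in \refE{Esuper}. The repair needs two scales: split each atom over $m_k\to\infty$ legs of length $\approx Na_k$, and end each leg with a bouquet of $B\gg Na_k$ leaves, with $m_kB\approx p_k|S_n|$; then a random vertex is a tip leaf with high probability, two samples from the same atom land in different legs, and all rescaled distances converge to the correct $d_D$-values (compare \refE{ESn}). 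This ``split each atom over unboundedly many tips'' device is precisely what the Elek--Tardos route builds in (a dendron is first approximated by finite real trees with many light leaves at the right radii, Lemma 7.1, and only then by rescaled combinatorial trees, Lemma 7.2), and it is the missing idea in your sketch.

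A secondary, fixable slip: separability of $A_D$ does not give a finite partition into pieces of $d_D$-diameter $<\eps$ (already $[0,\infty)$ has no such partition); you need tightness of $\nu$ on the Polish space $A_D$ to first restrict to a compact set of measure $\ge1-\eps$ and dump the remaining mass on a representative point, which costs only an extra $O(r\eps)$ in each $\tau_r$.
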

Again, this is not stated in quite this form in \cite{ET}, but it is a
simple consequence of \cite[Lemmas 7.1 and 7.2]{ET}; we omit the details.

\begin{theorem}[{\cite[Theorem 3]{ET}}]\label{ET3} %
  Two long dendrons $D$ and $D'$ are isomorphic if and only if
  $\tau_r(D)=\tau_r(D')$ for every $r\ge1$.
Consequently, the limit of a sequence of real trees (or rescaled finite trees)
is unique (up to isomorphism) if it exists.
\nopf
\end{theorem}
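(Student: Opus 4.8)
The plan is to handle the two implications separately. The \emph{only if} direction is immediate: if $f\colon(T,d)\to(T',d')$ is an isomorphism of the long dendrons $D=(T,d,\nu)$ and $D'=(T',d',\nu')$, then \eqref{dD} gives $d_{D'}(\bar f\xi,\bar f\eta)=d_D(\xi,\eta)$ for all $\xi,\eta\in A_D$, where $\bar f\colon(p,a)\mapsto(f(p),a)$, and by definition $\bar f$ carries $\nu$ to $\nu'$. Hence, sampling \iid{} points $\xi_1,\dots,\xi_r\sim\nu$ and applying $\bar f$, the matrices $\rho_r(\xi_1,\dots,\xi_r;A_D,d_D)$ and $\rho_r(\bar f\xi_1,\dots,\bar f\xi_r;A_{D'},d_{D'})$ coincide (also on the diagonal, both being $0$ by \eqref{rhor}), so $\tau_r(D)=\tau_r(D')$ for every $r$.

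For the \emph{if} direction, the idea is a reconstruction in the spirit of Gromov's theorem on metric measure spaces, adapted to the fact that $d_D$ is not a metric. Take an \iid{} sequence $\xi_1,\xi_2,\dots\sim\nu$ and consider only the infinite array $M=(M_{ij})_{i,j\ge1}$ with $M_{ij}:=d_D(\xi_i,\xi_j)$ for $i\ne j$. I would build, as a measurable function of $M$, a long dendron $\widehat D$ that is isomorphic to $D$ almost surely. Granting this, the theorem follows: the equalities $\tau_r(D)=\tau_r(D')$ say precisely that $M$ has the same law whether generated from $D$ or from $D'$ (the $\tau_r$ being the laws of the $r\times r$ sub-blocks of $M$); hence $\widehat D$ has the same distribution in the two cases; but that distribution is the point mass at the isomorphism class of $D$ in the first case and of $D'$ in the second, so $D\cong D'$. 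The stated consequence is then immediate: if $c_nT_n$ (or a sequence of real trees) converges to both $D$ and $D'$, then $\tau_r(D)=\lim_n\tau_r(c_nT_n)=\tau_r(D')$ for every $r$, so $D\cong D'$.

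The substantial step is the reconstruction, and its crux is recovering the ``hair lengths''. Write $\xi_i=(x_i,a_i)$ with $x_i\in T$, $a_i\in\ooo$, so that $M_{ij}=d(x_i,x_j)+a_i+a_j$ for $i\ne j$. For distinct $i,j,k$, the Gromov product $\tfrac12(M_{ji}+M_{jk}-M_{ik})$ equals $a_j+d\bigpar{x_j,\operatorname{med}(x_j,x_i,x_k)}\ge a_j$, with equality as soon as $x_j$ lies on the $T$-segment $[x_i,x_k]$. Here the defining property of a long dendron enters decisively: every branch of $T$ has positive $\nu$-measure, so the sample $(x_i)_i$ meets every branch at $x_j$ when $x_j$ is not a leaf of $T$, and accumulates at $x_j$ along its unique branch when $x_j$ is a leaf; in both cases $\inf_{i,k}\tfrac12(M_{ji}+M_{jk}-M_{ik})=a_j$ almost surely (simultaneously for all $j$). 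Thus $\widehat a_j:=\inf_{i,k}\tfrac12(M_{ji}+M_{jk}-M_{ik})$ recovers $a_j$, and then $\widehat d(i,j):=M_{ij}-\widehat a_i-\widehat a_j$ recovers the base distance $d(x_i,x_j)$.

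It remains to assemble these data. The pseudometric $\widehat d$ on $\bbN$ satisfies the four-point condition (being an almost-sure limit of the tree pseudometrics $d(x_i,x_j)$, a condition preserved under limits), so, after identifying points at $\widehat d$-distance $0$, its completion is isometric to a subset $S$ of a real tree, and $S=\overline{\set{x_i}}$ is almost surely the support of the $T$-marginal of $\nu$. A short argument using branch-positivity (project a point of $T$ onto the convex hull of $S$; the branch pointing away from the hull has zero measure) shows that the convex hull of $S$ equals $T$; taking $\widehat T$ to be this hull gives $\widehat T\cong(T,d)$. Finally, the empirical measures $\tfrac1r\sum_{i\le r}\gd_{(x_i,a_i)}$ converge weakly almost surely (strong law / Glivenko--Cantelli, using separability of $A_D$, \cite[Lemma 6.2]{ET}) to a Borel probability measure $\widehat\nu$ on $\widehat T\times\ooo$, and $(\widehat T,\widehat d,\widehat\nu)$ is a long dendron isomorphic to $D$, constructed measurably from $M$. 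I expect the main obstacle to be exactly the two uses of branch-positivity above --- the identity $\inf_{i,k}\tfrac12(M_{ji}+M_{jk}-M_{ik})=a_j$ and the fact that the base recovered is $T$ itself and not merely $\supp$ of the $T$-marginal of $\nu$; past those points the argument is a routine adaptation of the metric-measure-space reconstruction.
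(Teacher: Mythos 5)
This theorem is not proved in the paper at all: it is imported verbatim from \citet[Theorem 3]{ET} (note the missing proof), so there is no in-paper argument to compare yours with. Your sketch is an independent proof in the spirit of Gromov's reconstruction theorem for metric measure spaces, and its core is sound: the identity $\inf_{i,k}\tfrac12(M_{ji}+M_{jk}-M_{ik})=a_j$ strips off the hair lengths, $M_{ij}-\widehat a_i-\widehat a_j$ then recovers the base distances, and branch positivity is exactly what forces the closed convex hull of $\supp\mu$ (the base marginal's support) to be all of $T$ --- your projection/gate argument for that is correct. Two small repairs: at a leaf $x_j$ the identity $\widehat a_j=a_j$ is not a consequence of branch positivity (positivity of the unique branch at a leaf says nothing about accumulation of the sample at that leaf); what you need is that almost surely $x_j\in\supp\mu$, which follows from separability of the base (\cite[Lemma 6.2]{ET}). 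Also take the \emph{closed} convex hull, since the hull of $\supp\mu$ need not be closed.

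The one genuine gap is the final step, where you treat $\widehat D$ as a random variable whose law is ``the point mass at the isomorphism class of $D$''. For that you need a measurable structure on isomorphism classes of long dendrons in which singletons are measurable and $M\mapsto\widehat D(M)$ is measurable; in the present paper the only such structure is obtained by identifying $\fD$ with $\fT\subset\cpmoo$ via $\tauoo$, and the injectivity of $\tauoo$ is precisely \refT{ET3}, so as stated this reading is circular (and no alternative structure is specified). The standard repair avoids laws of dendrons entirely: $\tau_r(D)=\tau_r(D')$ for all $r$ gives $\tauoo(D)=\tauoo(D')$, and by a transfer argument (see \cite[Chapter 6]{Kallenberg}) one can couple \iid{} samples $(x_i,a_i)\sim\nu$ and $(x_i',a_i')\sim\nu'$ on one probability space so that the two distance matrices coincide almost surely. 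Then your recovery gives $a_i=a_i'$ and $d(x_i,x_j)=d'(x_i',x_j')$, so $x_i\mapsto x_i'$ is almost surely an isometry between dense subsets of the two supports; it extends to an isometry of the bases by the hull argument applied on both sides, and it carries the empirical measures, hence $\nu$ to $\nu'$, producing the isomorphism directly. With that replacement your reconstruction yields the theorem, and the uniqueness-of-limits consequence follows exactly as you say.
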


\section{Infinite matrices}\label{Soo}

We extend the definitions in \refS{SET} to the case $r=\infty$, \ie{} to  
infinite matrices.
Let $\Moo$ be the space of infinite real matrices $(a_{ij})_{i,j=1}^\infty$.
Define $\rho_r$ and $\tau_r$ by \eqref{rhor} and
\eqref{taur}--\eqref{taurus} 
also for
$r=\infty$; thus $\tauoo(X,d,\mu)$ is the distribution of the infinite random
matrix $\bigpar{d(\xi_i,\xi_j)\indic{i\neq j}}_{i,j\ge1}$ where $\xi_i$ are
\iid{} with  $\xi_i\sim\mu$.

Given any $A=(a_{ij})_{i,j=1}^s\in M_s$, with $r\le s\le \infty$,  
define the restriction 
\begin{align}
  \Pi_r(A)=(a_{ij})_{i,j=1}^r\in M_r, 
\end{align}
\ie,
the $r\times r$ top left corner of $A$.
Furthermore, if $A\in M_s$ is a random matrix with distribution
$\nu\in \cP(M_s)$, we denote the distribution of $\Pi_r(A)$ by
$\push{\Pi_r}{\nu}\in\cP(M_r)$. (This is the push-forward of $\nu$, see
\eqref{push}.)
In other words, $\Pi_r(\nu)$ is the marginal distribution of
the $r\times r$ top left corner.

Say that a sequence $\xnu_r\in\cP(M_r)$, $1\le r<\infty$, is \emph{consistent}
if $\Pi_r(\xnu_s)=\xnu_r$ when $r\le s$. 

If $\xnu\in\cP(\Moo)$, then the sequence $\xnu_r:=\Pi_r(\xnu)$ is obviously
consistent. Conversely, every consistent sequence arises in this way for a
unique $\xnu\in\cP(\Moo)$;
the corresponding statement for distributions of random vectors in
$\bbR^\infty$ is well-known  
\cite[Theorem 6.14]{Kallenberg}, and the result for $\Moo$ 
follows immediately by reading the entries of the matrices in a suitable
fixed order.
Furthermore, if $\xnu,\xnu\xnn\in\Moo$, then
\begin{align}\label{erika}
  \xnu\xnn\to\xnu \text{ in $\cP(\Moo)$}
\iff
\Pi_r(\xnu\xnn)\to\Pi_r(\xnu)
 \text{ in $\cP(M_r)$ for each $r\ge1$}.
\end{align}
Again, this follows immediately from the corresponding well-known fact for
$\bbR^\infty$ \cite[p.~19]{Billingsley}.

A sequence $\tau_r(X)$, $r\ge1$, given by \eqref{taurus} is obviously
consistent; furthermore, $\tau_r(X)=\Pi_r(\tauoo(X))$ for every $r$.
Consequently, \eqref{erika} implies the following.

\begin{theorem}\label{Too}
Let $(T_n)\xoo=(T_n,d_n,\mu_n)\xoo$ be a sequence of measured real trees,
and let $D=(T,d,\nu)$ be a long dendron.
  \begin{romenumerate}
  \item \label{Tooa}
The sequence $(T_n)\xoo$ converges
if and only if there exists $\xnu\in \cP(\Moo)$ such
that,
as \ntoo,
\begin{align}
\tauoo(T_n) 
\to \xnu
\qquad\text{in $\cP(\Moo)$},
\end{align}
\ie, if and only if the infinite random matrices
$\rhooo(\xi\nn_1,\xi\nn_2,\dots;T_n)$ converge in distribution, 
where $\xi\nn_i$ are
\iid{} random points in $T_n$ with  $\xi\nn_i\sim\mu_n$.
  \item \label{Toob}
The sequence $(T_n)\xoo$ converges to $D$
if and only if 
as \ntoo,
\begin{align}
\tauoo(T_n) 
\to \tauoo(D)
\qquad\text{in $\cP(\Moo)$},
\end{align}
\ie, if and only if the infinite random matrices
$\rhooo(\xi\nn_1,\xi\nn_2,\dots;T_n)$ converge in distribution
to $\rhooo(\xi_1,\xi_2,\dots;D)$,
where $\xi\nn_i$ are as in \ref{Tooa}
and $\xi_i$ are \iid{} with $\xi_i\sim\nu$.
   
  \end{romenumerate}
In particular, the same results holds for a sequence $(c_nT_n)\xoo$ of
rescaled finite trees.
\end{theorem}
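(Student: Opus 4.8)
The plan is to deduce \refT{Too} entirely from the equivalence \eqref{erika} together with the bookkeeping already set up in this section, so that essentially no new probabilistic content is needed. First I would observe that for any measured real tree $X=(X,d,\mu)$ (and in particular for any rescaled finite tree $c_nT_n$, via \refE{Ereal}, or any long dendron $D$ via \eqref{bob}) the sampling measures satisfy $\tau_r(X)=\Pi_r(\tauoo(X))$ for every finite $r$. This is because $\rho_r(\xi_1,\dots,\xi_r)$ is literally the top-left $r\times r$ corner of $\rhooo(\xi_1,\xi_2,\dots)$ when the $\xi_i$ are the same \iid{} sequence, and pushing forward commutes with taking that corner. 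Hence the sequence $(\tau_r(X))_{r\ge1}$ is consistent and is the restriction sequence of the single measure $\tauoo(X)\in\cP(\Moo)$, which by the paragraph around \eqref{erika} is unique.

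Next I would prove part \ref{Tooa}. By \refD{D2}, $(T_n)_n$ converges iff there exist $\xnu_r\in\cP(M_r)$ with $\tau_r(T_n)\to\xnu_r$ in $\cP(M_r)$ for every $r$. Since each $\tau_r(T_n)=\Pi_r(\tauoo(T_n))$, and since the limits $\xnu_r$, if they exist, automatically form a consistent family (being limits of the consistent families $(\tau_r(T_n))_r$, using that $\Pi_r$ is continuous so commutes with the weak limit), the consistent family $(\xnu_r)_r$ corresponds to a unique $\xnu\in\cP(\Moo)$ with $\Pi_r(\xnu)=\xnu_r$. Then \eqref{erika} gives $\tauoo(T_n)\to\xnu$ in $\cP(\Moo)$ iff $\Pi_r(\tauoo(T_n))=\tau_r(T_n)\to\Pi_r(\xnu)=\xnu_r$ for every $r$, which is exactly convergence of $(T_n)_n$. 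Conversely, if $\tauoo(T_n)\to\xnu$ for some $\xnu\in\cP(\Moo)$, then applying $\Pi_r$ (continuous) gives $\tau_r(T_n)\to\Pi_r(\xnu)$ for each $r$, so $(T_n)_n$ converges. The reformulation in terms of convergence in distribution of the matrices $\rhooo(\xi\nn_1,\xi\nn_2,\dots;T_n)$ is just the statement that $\tauoo(T_n)\to\xnu$ in $\cP(\Moo)$, by definition of $\tauoo$ and of convergence in distribution of random elements of a metric space.

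For part \ref{Toob}, I would apply exactly the same argument with the specific target $\xnu=\tauoo(D)$: by \refD{D3}, $(T_n)_n$ converges to $D$ iff $\tau_r(T_n)\to\tau_r(D)$ in $\cP(M_r)$ for every $r$; since $\tau_r(D)=\Pi_r(\tauoo(D))$, \eqref{erika} turns this into $\tauoo(T_n)\to\tauoo(D)$ in $\cP(\Moo)$, which is convergence in distribution of $\rhooo(\xi\nn_1,\dots;T_n)$ to $\rhooo(\xi_1,\dots;D)$. Finally, the sentence "In particular, the same holds for $(c_nT_n)$" follows since by \refE{Ereal} and \eqref{ctt} we have $\tau_r(c_nT_n)=\tau_r(c_n\rT_n,\mu_{T_n})$ for all $r$ including $r=\infty$, so the rescaled finite tree case is literally a special case of the measured-real-tree case. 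I do not expect any real obstacle here; the only mildly delicate point is making sure the limit family $(\xnu_r)_r$ in \ref{Tooa} is genuinely consistent so that it defines a point of $\cP(\Moo)$, which follows from continuity of the restriction maps $\Pi_r$ and the consistency of each finite-$n$ family $(\tau_r(T_n))_r$.
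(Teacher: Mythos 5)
Your proposal is correct and follows essentially the same route as the paper: the paper's own proof likewise reduces the theorem to the remarks preceding it (the identity $\tau_r(X)=\Pi_r(\tauoo(X))$, the Kolmogorov-type correspondence between consistent families and measures on $\Moo$, and the equivalence \eqref{erika}), noting that the limit family $(\xnu_r)_r$ is consistent because each family $(\tau_r(T_n))_r$ is. Your extra care about continuity of the restriction maps $\Pi_r$ is exactly the point the paper leaves implicit, so there is nothing to add.
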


\begin{proof}
  This follows from the remarks before the theorem.
Note that if \eqref{d2} holds for every $r\ge1$, then $(\xnu_r)_r$
is a consistent sequence,
since $(\tau_r(T_n))_r$ is for every $n$.
\end{proof}

\section{Abstract tree limits}\label{Sabs}

Based on the preceding section, we can define tree limits in an
abstract way as follows, using only (part of) the definitions and elementary
considerations above and none
of the deep results of \cite{ET}. (Cf.\ \cite{SJ209} for graph limits.)

Let $\cTf$ be the set of all rescaled finite trees $cT$ (with arbitrary $c>0$).
Then $\tauoo:\cTf\to\cpmoo$.
Let $\fTf:=\tauoo(\cTf)\subseteq\cpmoo$
and
\begin{align}\label{fT}
\fT:=\overline{\fTf}=\overline{\tauoo(\cT)}\subseteq\cpmoo.  
\end{align}
This defines $\fT$ as a closed subset of the Polish space $\cpmoo$; thus
$\fT$ is a Polish space.
Hence, we can regard $\fT$ as a (complete and separable) metric space
whenever convenient; if necessary we can define a metric of $\fT$ \eg{} as
the Prohorov metric on $\cP(\Moo)$ 
\cite[Appendix III]{Billingsley}, \cite[Theorem 8.3.2]{Bogachev},
but we have in the present paper no need for a specific choice of metric.

We can identify a rescaled finite tree $cT$ with its image
$\tauoo(cT)\in\fT$
(temporarily ignoring the question whether this is a one-to-one map).
Then 
convergence as in \refD{D1} is, by \refT{Too}, the same as convergence in
the metric space $\fT$. Furthermore, $\fT$ is the set of all possible limits
of convergent sequences; thus it is natural to say that 
$\fT$ is the set of \emph{tree limits}.

We have thus defined a set of tree limits; moreover, this set has turned out to
be a Polish space.

Similarly, a measured real tree $T$ defines an element $\tauoo(T)\in\cpmoo$.
We define $\cTr$ as the set of all measured real trees and 
$\fTr:=\tauoo(\cTr)\subset\cpmoo$. (We  ignore the set-theoretic difficulty
of defining the "set of all measurable real trees"; formally  we either
consider trees that are subsets of some huge universe, or suitable
equivalence classes under isomorphisms.)
Then the following holds.

\begin{theorem}
  \label{TTT}
With notations as above,
\begin{align}
  \fTf\subseteq \fTr\subseteq\fT=\overline{\fTf}=\overline{\fTr}.
\end{align}
\end{theorem}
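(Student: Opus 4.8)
The plan is to prove the chain of inclusions and equalities by establishing each piece separately. First I would note that $\fTf \subseteq \fTr$ is essentially immediate from \refE{Ereal}: given a rescaled finite tree $cT$, the associated real tree $c\rT$ equipped with $\mu_T$ is a measured real tree, and by \eqref{ctt} we have $\tauoo(cT) = \tauoo(c\rT,\mu_T)$, so every element of $\fTf$ lies in $\fTr$. (One should check $c\rT$ is separable with $\mu_T$ Borel, which is trivial since $\rT$ is compact and $\mu_T$ is finitely supported.) Likewise $\fTr \subseteq \fT$ holds provided $\fTr \subseteq \overline{\fTf}$, which is one of the two closure statements to be proved; together with the tautology $\fT = \overline{\fTf}$ from the definition \eqref{fT}, this gives $\fTf \subseteq \fTr \subseteq \fT$.

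The substantive content is the two equalities $\overline{\fTf} = \overline{\fTr}$, equivalently the single inclusion $\fTr \subseteq \overline{\fTf}$ (the reverse $\fTf \subseteq \fTr \subseteq \overline{\fTr}$ is clear). So I would fix a measured real tree $T = (T,d,\mu)$ and approximate $\tauoo(T)$ in $\cP(\Moo)$ by sampling measures of rescaled finite trees. The natural strategy: sample $\xi_1,\dots,\xi_m \sim \mu$ \iid, let $T^{(m)}$ be the subtree of $T$ spanned by $\{\xi_1,\dots,\xi_m\}$ (the union of the arcs between these points), which is a finite real tree, and then discretize its edge lengths to rational multiples of a small $\eps$ so that it becomes (a rescaling of) a combinatorial tree $cT'$. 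Equip $T^{(m)}$ with the empirical measure $\frac1m\sum_i \gd_{\xi_i}$ transported to the discretized tree. As $m \to \infty$ and the discretization mesh $\to 0$, one argues that $\tauoo$ of this object converges to $\tauoo(T)$: by the law of large numbers the empirical $r$-point sampling matrices from $T^{(m)}$ with empirical measure agree in distribution, for each fixed $r \le m$, with those from $(T,d,\mu)$ up to the negligible event that some $\xi_i = \xi_j$; and discretization perturbs all pairwise distances by at most $O(\eps \cdot \mathrm{diam})$, hence perturbs $\tauoo$ in the Prohorov metric by a controllable amount. Letting both parameters go to their limits along a diagonal gives a sequence in $\fTf$ converging to $\tauoo(T)$, so $\tauoo(T) \in \overline{\fTf}$.

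Alternatively — and this may be cleaner — one can invoke the deep results already available: by \refT{ET1} the element $\tauoo(T) \in \fTr$ is of the form $\tauoo(D)$ for some long dendron $D$ (apply \refT{ET1} to the constant sequence $T_n = T$, or directly observe $\tauoo(T) = \tauoo(D)$ with $D$ the dendron of \refE{ED=T} when every branch has positive measure, handling the general case by a limiting argument), and then by \refT{ET2} this long dendron $D$ is the limit of some convergent sequence $(c_nT_n)$ of rescaled finite trees, i.e. $\tauoo(c_nT_n) \to \tauoo(D) = \tauoo(T)$ in $\cP(\Moo)$; hence $\tauoo(T) \in \overline{\fTf}$. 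This proves $\fTr \subseteq \overline{\fTf}$. Combined with $\overline{\fTf} \subseteq \overline{\fTr}$ (from $\fTf \subseteq \fTr$) and the reverse inclusion just shown (which gives $\overline{\fTr} \subseteq \overline{\overline{\fTf}} = \overline{\fTf}$), we get $\overline{\fTf} = \overline{\fTr} = \fT$, completing the chain.

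The main obstacle is the inclusion $\fTr \subseteq \overline{\fTf}$: one must genuinely approximate an arbitrary measured real tree by finite trees in the sampling-measure topology. The self-contained route requires care with two independent approximations (finite sampling and edge-length discretization) and with the null event $\{\xi_i = \xi_j\}$ when $\mu$ has atoms — though atoms are actually harmless here, since $\rho_r$ already zeroes the diagonal, so coincidences among sampled points only make the off-diagonal entries vanish, consistently with $d(\xi_i,\xi_j) = 0$. The route via \refTs{ET1} and \ref{ET2} offloads all difficulty onto \cite{ET} but needs the minor lemma that $\tauoo(T)$ for a measured real tree equals $\tauoo(D)$ for some long dendron, which for trees where some branch has measure zero requires either the general form of \refT{ET1} or a small perturbation argument adding tiny mass to each branch. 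I would present the short proof using \refTs{ET1} and \ref{ET2}, with the self-contained approximation sketched as a remark.
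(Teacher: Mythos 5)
Your proposal is correct, and the route you say you would actually present — applying \refT{ET1} to the constant sequence $T,T,\dots$ to get a long dendron $D$ with $\tauoo(T)=\tauoo(D)$, then using \refT{ET2} to realize $D$ as a limit of rescaled finite trees — is exactly the paper's short proof (the paper phrases the second step through \refT{TDT}, whose proof is just \refTs{ET1} and \ref{ET2}). The paper also records an elementary alternative whose approximation ladder differs slightly from your sketch: instead of empirical measures from random samples plus edge-length discretization, it truncates $\mu$ to finitely and then countably supported measures and maps onto a countable dense subset, citing \cite[Lemma 7.2]{ET} for finite real trees; your randomized variant would also work, subject to the same care you already note about the diagonal and the discretization error.
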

We postpone the proof.
It follows that convergence of measured real trees as in
\refD{D2} also is the same as convergence in $\fT$.
From now on, whenever convenient, we identify finite trees and measured real
trees with their images in $\fT$.

Returning to the deep results by \citet{ET} in \refTs{ET1}--\ref{ET3},
we first note that, similarly, each long dendron $D$ defines an element
$\tauoo(D)\in\cpmoo$. \refT{ET3} and the remarks in \refS{Soo} show that
$\tauoo(D)=\tauoo(D')$ if and only if $D$ and $D'$ are isomorphic.
Thus, letting $\fD$ be the set of all
equivalence classes of 
long dendrons modulo isomorphism,
$\tauoo:\fD\to\cpmoo$ is injective.
\begin{theorem}
  \label{TDT}
$\tauoo(\fD)=\fT$, and
the mapping
$\tauoo:\fD\to\fT$ is a bijection.
\end{theorem}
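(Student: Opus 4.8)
The plan is to note first that the injectivity of $\tauoo\colon\fD\to\cpmoo$ has just been recorded (it is \refT{ET3} together with the consistency observations of \refS{Soo}), so the whole statement reduces to the set identity $\tauoo(\fD)=\fT$; once that is known, $\tauoo\colon\fD\to\fT$ is automatically a bijection. I would establish the two inclusions $\tauoo(\fD)\subseteq\fT$ and $\fT\subseteq\tauoo(\fD)$ separately, in each case translating between convergence in $\cpmoo$ and convergence of all finite truncations $\tau_r$ by means of \eqref{erika} (equivalently \refT{Too}), and invoking the two deep results \refTs{ET1} and~\ref{ET2} of \citet{ET}.

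For $\tauoo(\fD)\subseteq\fT$ I would argue as follows. Let $D$ be a long dendron. By \refT{ET2} there is a sequence $(c_nT_n)\xoo$ of rescaled finite trees converging to $D$, \ie{} $\tau_r(c_nT_n)\to\tau_r(D)$ in $\cP(M_r)$ for every $r\ge1$. Since $\tau_r(c_nT_n)=\Pi_r\bigpar{\tauoo(c_nT_n)}$ and $\tau_r(D)=\Pi_r\bigpar{\tauoo(D)}$ by \refS{Soo}, \eqref{erika} gives $\tauoo(c_nT_n)\to\tauoo(D)$ in $\cpmoo$. As $\tauoo(c_nT_n)\in\fTf$ for every $n$, this shows $\tauoo(D)\in\overline{\fTf}=\fT$.

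For $\fT\subseteq\tauoo(\fD)$ I would take $\xnu\in\fT=\overline{\fTf}$ and choose rescaled finite trees $c_nT_n$ with $\tauoo(c_nT_n)\to\xnu$ in $\cpmoo$. By \eqref{erika} this means $\tau_r(c_nT_n)\to\Pi_r(\xnu)$ in $\cP(M_r)$ for each $r$, so $(c_nT_n)\xoo$ converges in the sense of \refD{D1} with $\xnu_r:=\Pi_r(\xnu)$. By \refT{ET1} the sequence converges to some long dendron $D$, \ie{} $\tau_r(c_nT_n)\to\tau_r(D)$ for every $r$; uniqueness of limits in the metric space $\cP(M_r)$ then forces $\Pi_r(\xnu)=\tau_r(D)=\Pi_r\bigpar{\tauoo(D)}$ for all $r$, and hence $\xnu=\tauoo(D)\in\tauoo(\fD)$ by the consistency/uniqueness fact of \refS{Soo}.

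I do not expect any single step to be the real obstacle: all the substance comes from \citet{ET} and is imported through \refTs{ET1}--\ref{ET3}, while the argument above is just the bookkeeping tying these to the elementary remarks of \refS{Soo}. The only points needing care are precisely that bookkeeping --- that convergence in $\cpmoo$ is equivalent to convergence of every $\tau_r$, which is \eqref{erika}, and that the limit long dendron produced by \refT{ET1} is unique up to isomorphism, which is \refT{ET3}; the latter is what makes ``bijection'' (and not merely ``surjection'') the correct word here.
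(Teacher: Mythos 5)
Your proposal is correct and follows essentially the same route as the paper: surjectivity via \refT{ET2} for one inclusion and \refT{ET1} for the other, with injectivity supplied by \refT{ET3}, the only difference being that you spell out explicitly the passage between convergence of all $\tau_r$ and convergence of $\tauoo$ via \eqref{erika}, which the paper leaves implicit.
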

\begin{proof}
If $D$ is a long dendron, then by \refT{ET2}, there exists a convergent
sequence of rescaled finite trees $(c_nT_n)_n$ that converges to $D$.
In other words, $\tauoo(c_nT_n)\to\tauoo(D)$. Thus
$\tauoo(D)\in\overline{\tauoo(\cTf)}=\fT$. 

Conversely, 
  if $\mu\in\fT$, then there exists a sequence $c_nT_n\in\cTf$ such that
  $\tauoo(c_nT_n)\to \mu$. Thus the sequence $c_nT_n$ is convergent, and
by \refT{ET1},   there exists a long dendron $D$ such that $c_nT_n\to D$,
which means $\tauoo(c_nT_n)\to\tauoo(D)$.
Consequently, $\mu=\tauoo(D)$.

Hence, $\tauoo(\fD)=\fT$, and we have already remarked that $\tauoo$ is
injective on $\fD$ by \refT{ET3}.
\end{proof}

Consequently, we can identify $\fD$ and $\fT$, and regard also $\fD$ as the
set of all tree limits. 
(As done by \citet{ET}.)
Note that this defines a topology on $\fD$, making $\fD$ into a Polish space. 

We ignore the taking of equivalence classes,
and regard $\fD$ as the set of all long dendrons. 
Thus, the topology on $\fD$ gives a
notion of convergence for long dendrons. 

\begin{theorem}\label{TD0}
  Let $D=(T,d,\nu)$ and $D_n=(T_n,d_n,\nu_n)$, $n\ge1$ be long dendrons. 
Then the following are  equivalent.
  \begin{romenumerate}
  \item\label{TD0a}  
$D_n\to D$ in $\fD$.
  \item \label{TD0b}
$\tauoo(D_n)\to\tauoo(D)$ in $\cpmoo$.    
  \item\label{TD0c}  
The infinite random matrices
$\rhooo(\xi\nn_1,\xi\nn_2,\dots;D_n)$ converge in distribution
to $\rhooo(\xi_1,\xi_2,\dots;D)$,
where $\xi\nn_i$ are
\iid{} random points in $A_{D_n}$ with  $\xi\nn_i\sim\mu_n$.
and $\xi_i$ are
\iid{} random points in $A_{D}$ with $\xi_i\sim\nu$.

\item\label{TDod} 
$\tau_r(D_n)\to\tau_r(D)$ in $\cP(M_r)$, for every $r\ge1$.

\item \label{TD0e}
 The finite random matrices
$\rho_r(\xi\nn_1,\xi\nn_2,\dots;D_n)$ converge in distribution
to $\rho_r(\xi_1,\xi_2,\dots;D)$
for every $r\ge1$,
where $\xi\nn_i$ and $\xi_i$ are as in \ref{TD0c}.
  \end{romenumerate}
\end{theorem}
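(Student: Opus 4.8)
The plan is to prove the cycle of implications
\ref{TD0a} $\Rightarrow$ \ref{TD0b} $\Rightarrow$ \ref{TD0c} $\Rightarrow$ \ref{TD0d} $\Rightarrow$ \ref{TD0e} $\Rightarrow$ \ref{TD0b} $\Rightarrow$ \ref{TD0a},
using the identification $\fD\cong\fT$ from \refT{TDT} together with the machinery on infinite matrices from \refS{Soo}. The point is that essentially everything here is formal once one invokes \refT{TDT}: the topology on $\fD$ is \emph{defined} as the pullback along $\tauoo:\fD\to\fT$ of the subspace topology on $\fT\subseteq\cpmoo$, and $\tauoo$ is a bijection, so \ref{TD0a} $\Leftrightarrow$ \ref{TD0b} is immediate. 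The remaining equivalences are restatements of definitions and of facts already recorded.

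Carrying this out: \ref{TD0b} $\Leftrightarrow$ \ref{TD0c} is just the definition \eqref{bob} of $\tauoo(D)$ as the law of the infinite random matrix $\rhooo(\xi_1,\xi_2,\dots;A_D,d_D)$ with $\xi_i\sim\nu$ \iid, together with the fact that convergence in $\cpmoo$ of these laws is, by definition of the weak topology, convergence in distribution of the corresponding random matrices. For \ref{TD0b} $\Leftrightarrow$ \ref{TD0d} I would appeal directly to \eqref{erika} (the characterization of convergence in $\cP(\Moo)$ via convergence of all finite top-left corners in $\cP(M_r)$), noting that $\tau_r(D)=\Pi_r(\tauoo(D))$ for every $r$, exactly as recorded just before \refT{Too}. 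Finally \ref{TD0d} $\Leftrightarrow$ \ref{TD0e} is again the definition of the weak topology on $\cP(M_r)$: $\tau_r(D_n)\to\tau_r(D)$ in $\cP(M_r)$ means precisely that $\rho_r(\xi\nn_1,\dots,\xi\nn_r;D_n)\dto\rho_r(\xi_1,\dots,\xi_r;D)$, with the $\xi\nn_i$ and $\xi_i$ as in \ref{TD0c}. Since $M_r=\bbR^{r^2}$ is Polish, no subtlety arises.

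I do not expect a genuine obstacle here; the statement is a bookkeeping lemma assembling \refT{TDT}, \eqref{erika}, and the defining descriptions of the sampling measures. The one place that needs a word of care is the consistency issue: to go from ``$\tau_r(D_n)\to\tau_r(D)$ for all $r$'' back to ``$\tauoo(D_n)\to\tauoo(D)$'' one uses that each sequence $(\tau_r(D_n))_r$ is consistent (being of the form $(\tau_r(A_{D_n},d_{D_n},\nu_n))_r$, which is consistent as noted for $\tau_r(X)$ in \refS{Soo}) and likewise for $(\tau_r(D))_r$, so that \eqref{erika} applies with a genuine element $\tauoo(D)\in\cpmoo$ as the limit rather than merely a consistent family; this is the same remark made in the proof of \refT{Too}. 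With that observation in place the proof is a short chain of references, and I would write it as such, probably collapsing several of the equivalences into a single sentence each.
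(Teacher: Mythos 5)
Your proposal is correct and follows essentially the same route as the paper, whose proof is simply ``immediate by the definitions and comments before the theorem together with \eqref{erika}'': you unwind exactly those ingredients (the identification $\tauoo:\fD\to\fT$ from \refT{TDT}, the definition of the sampling measures, and the marginal characterization \eqref{erika}, including the consistency remark). The only difference is that you spell out the bookkeeping that the paper leaves implicit, which is harmless.
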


\begin{proof}
Immediate by the definitions and comments before the theorem together with
\eqref{erika}.
\end{proof}

Summarizing,
we may thus regard finite trees, real trees, and long dendrons as elements
of the Polish space $\fT\subset\cpmoo$.
This gives a unified meaning to convergence of trees and real trees to a
long dendron, and also a notion of convergence of long dendrons.

We turn to the question whether $\tauoo$ is injective (up to obvious
isomorphisms) on the sets $\cTf$ of finite trees and $\cTr$ of measured real
trees; recall that for long dendrons, this is answered (positively) by
\refT{ET3}. 
\citet[$3\frac12.5$ and $3\frac12.7$]{Gromov} 
studied a more general setting and
proved
that if $X_1=(X_1,d_1,\mu_1)$ and 
$X_2=(X_2,d_2,\mu_2)$ are two separable and complete metric
measure spaces such that the measures have full support,
and $\tauoo(X_1)=\tauoo(X_2)$, then
$X_1$ and $X_2$ are isomorphic.
This applies immediately to rescaled finite trees, and it follows that
if $c_1T_1$ and $c_2T_2$ are rescaled
trees with $\tauoo(c_1T_1)=\tauoo(c_2T_2)$, then $T_1\cong T_2$
as metric spaces, and thus as trees,
and $c_1=c_2$. (Except in the trivial case $|T_1|=|T_2|=1$, when $c_1$ and
$c_2$ are arbitrary.)
In other words, $\tauoo:\cTf\to\fT$ is injective up to isomorphism.

For measured real trees $(T,d,\mu)$, this is not quite true, since it may happen
that $\mu$ is concentrated on a subtree $T'\subset T$, and then
$\tauoo(T,d,\mu)=\tauoo(T',d,\mu)$.
However, if $\cTc$ is the set of measured real trees such that every branch has
positive measure, then $\tauoo$ is injective on $\cTc$ (up to isomorphism).
One way to see that is to note that every $T=(T,d,\mu)\in\cTc$ may be
regarded as a long dendron as in \refE{EOO},
and then use \refT{ET3}.

In general, given a measured real tree $T$, we may prune branches of measure
0 and obtain a subtree $T'\in\cTc$; this is called the \emph{core} of $T$ in
\cite{ET}, where a detailed definition is given.  
We see that the mapping $\tauoo$ does not distinguish between
a measured real tree $T$ and its core $T'$.

In other words, our identification of measured real trees with tree limits
in $\fT$ means that we ignore  branches of measure 0, and thus
identify a tree with its core, but trees with different cores are distinguished.
With some care, we may thus also regard measured real trees as elements of
$\fT$. 

One important consequence of regarding trees, measured real trees and long
dendrons as elements of the Polish space $\fT$ is that then
standard theory (\eg{} \cite{Billingsley})
defines for us random trees, random measured real trees and random long
dendrons, as well as 
convergence in probability or distribution of such random objects.
This will be a central topic in the remainder of the paper.

First, however,
it remains to prove \refT{TTT}.

\begin{proof}[Proof of \refT{TTT}]
First,  as explained in \refE{Ereal},
a rescaled finite tree $cT\in\cTf$ can be embedded in a measured real tree
$c\rT\in\cTr$
such that \eqref{ctt} holds for all finite $r$, and thus also for
$r=\infty$.
This proves $\fTf\subseteq\fTr$.

Recalling \eqref{fT}, it remains only to show $\fTr\subseteq\fT$.

We give first a short proof using the results of \cite{ET}.
If $T$ is a measured real tree, then the constant sequence $T,T,\dots$
trivially is 
convergent, and thus \refT{ET1} shows that there exists a long dendron $D$
such that $T\to D$, which by \refT{TD0} means $\tauoo(T)=\tauoo(D)$.
Hence, by \refT{TDT},
\begin{align}
\tauoo(T)=\tauoo(D)\in\tauoo(\fD)=\fT.  
\end{align}

We give also  an alternative, elementary proof. We do this in several steps.
We consider for simplicity, as said earlier, only separable trees.

\stepx
As in \cite{ET}, say that a measured real tree is a \emph{finite real tree}
if it can be obtained from a finite tree by regarding each edge as an
interval of some positive length (not necessarily the same for all edges),
and adding a probability measure on the (finite) set of vertices.
Note that a finite real tree has finite diameter.
\cite[Lemma 7.2]{ET} shows that every finite real tree $T$ of diameter $\le1$ 
is a limit of rescaled finite trees, \ie, $T\in\fT$.
By rescaling, the same holds for every finite real tree.

\stepx
Suppose that $T=(T,d,\mu)$ is a measured real tree such that $\mu$ is
concentrated on a finite set of points \set{x_1,\dots,x_m}.
Let $T':=\bigcup_{i=1}^m[x_1,x_i]$ be the subtree spanned by \set{x_1,\dots,x_m}.
Then $(T',d,\mu)$ is a finite real tree. Furthermore,
$\tauoo(T)=\tauoo(T',d,\mu)\in\fT$, using Step 1.

\stepx
Suppose that $T=(T,d,\mu)$ is a measured real tree such that $\mu$ is
concentrated on a countable  set of points \set{x_1,x_2,\dots}.
Let 
\begin{align}
  \mu_n:=\sumkn \mu\set{x_k}\gd_{x_k} 
+\Bigpar{\sum_{k=n+1}^\infty \mu\set{x_k}}\gd_{x_1},
\end{align}
where $\gd_x$ is the point mass at $x$.
Let $\xi_i$ be \iid{} with  $\xi_i\sim\mu$,
and let
\begin{align}
  \xi\nn_i:=
  \begin{cases}
    \xi_i& \text{if }\xi_i\in\set{x_1,\dots,x_n},
\\
x_1 &\text{otherwise}.
  \end{cases}
\end{align}
Then $(\xi\nn_i)_i$ are \iid{} random points in $T$ with  $\xi\nn_i\sim\mu_n$.
Furthermore, $\P(\xi\nn_i\neq\xi_i)\to0$ as \ntoo,
and thus $\rho_r(\xi\nn_1,\dots,\xi\nn_r)\pto \rho_r(\xi_1,\dots,\xi_r)$
for each $r\ge1$.
Hence, 
$\tau_r(T,d,\mu_n)\to\tau_r(T,d,\mu)$ for every
finite $r$, and thus also
$\tauoo(T,d,\mu_n)\to\tauoo(T,d,\mu)$. 
Since $\tauoo(T,d,\mu_n)\in\fT$ by Step 2, it follows that
$\tauoo(T,d,\mu)\in\fT$.  

\stepx
Let $T=(T,d,\mu)$ be any separable tree.
There exists a countable dense subset $A:=\set{x_1,x_2,\dots}$.

For each $n\ge1$, define a measurable function $f_n:T\to A$ such that
$d(x,f_n(x))<1/n$ for all $x$. (For example, let $f_n(x):=x_i$ for the smallest 
$i$ such that $d(x,x_i)<1/n$.)
Let $\xi_i$ be \iid{} random points in $T$ with  $\xi_i\sim\mu$,
and let $\xi\nn_i:=f_n(\xi_i)$.
Then $(\xi\nn_i)_i$ are \iid{} with  $\xi\nn_i\sim \mu_n:=f_n(\mu)$, which is
concentrated on the countable set $A$.
By Step 3, $\tauoo(T,d,\mu_n)\in\fT$ for every $n$.
Furthermore, 
\begin{align}
  \bigabs{d(\xi\nn_i,\xi\nn_j)-d(\xi_i,\xi_j)}
\le d(\xi\nn_i,\xi_i)+d(\xi\nn_j,\xi_j)<2/n
\end{align}
for every $i$ and $j$, and 
and thus $\rho_r(\xi\nn_1,\dots,\xi\nn_r)\asto \rho_r(\xi_1,\dots,\xi_r)$ as
\ntoo{} 
for each $r\ge1$.
Hence, 
$\tau_r(T,d,\mu_n)\to\tau_r(T,d,\mu)$ for every
finite $r$, and thus also
$\tauoo(T,d,\mu_n)\to\tauoo(T,d,\mu)$. 
Consequently, $\tauoo(T,d,\mu)\in\fT$. 
\end{proof}

\section{Compactness}\label{Scompact}.

Recall that a set $S$ in a metric space $X$
is \emph{relatively compact} if
every sequence in $S$ has a convergent subsequence. 
(This is equivalent to $\overline S$ being compact.)

Recall also that a family $\set{Z_\ga:\ga\in\cA}$ of random variables in a
metric 
space $X$ is \emph{tight} if for every $\eps>0$ there exists a compact set
$K_\eps\subseteq X$ such that $\P(Z_\ga\notin K_\eps)< \eps$ for every
$\ga\in\cA$. In this case we also say that the family of distributions
$\set{\cL(Z_\ga)}$ is tight.

Prohorov's theorem \cite[Section 6]{Billingsley} says that for a Polish
space $X$, 
the set of distributions $\set{\cL(Z_\ga)}$ is relatively compact in
$\cP(X)$ if and only if $\set{Z_\ga}$ is tight.
In particular, this leads to the following characterization of relative
compactness in $\fT$.

\begin{theorem}\label{TC}
  Let $A=\set{c_\ga T_\ga:\ga\in\cA}$ be a set of rescaled trees.
Then the following are equivalent, where 
$(\xi\nnga_i)_i$ are 
\iid{} 
uniformly random vertices in $T_\ga$
and $d_\ga$ is the graph distance in $T_\ga$.
\begin{romenumerate}
  
\item \label{TCa}
$A$ is relatively compact.
\item \label{TCtau}
The set of measures 
$\set{\tauoo(c_\ga T_\ga):\ga\in\cA}\subseteq\cpmoo$ is tight. 
\item \label{TCrho}
The set of random variables
$\set{\rhooo(\xi\nnga_1,\xi\nnga_2\dots; c_\ga T_\ga):\ga\in\cA}$ in $\Moo$
is tight. 
\item \label{TC2}
The set of random variables $\set{c_\ga d_\ga(\xi\nnga_1,\xi\nnga_2):\ga\in\cA}$ 
is tight.
\item \label{TC1}
There exists $x_\ga\in T_\ga$, $\ga\in\cA$, such that
the set of random variables $\set{c_\ga d_\ga(\xi\nnga_1,x):\ga\in\cA}$ is tight.
\end{romenumerate}
The same holds, mutatis mutandis, for sets of measured real trees 
\set{(T_\ga,d_\ga,\mu_\ga)}, and for
sets of long dendrons \set{(T_\ga,d_\ga,\nu_\ga)};
in these cases, $\xi\nnga_i\sim\mu_\ga$ and $\xi\nnga_i\sim\nu_\ga$,
respectively, 
and for long dendrons we use $d_D$ defined in \eqref{dD}.
\end{theorem}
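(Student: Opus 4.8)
The plan is to prove the chain of equivalences by a cycle together with a couple of direct implications, reducing everything to tightness of a single real random variable. First I would use Prohorov's theorem to get the equivalence \ref{TCa}$\iff$\ref{TCtau}: by \refT{Too}, a sequence $c_{\ga_n}T_{\ga_n}$ converges in $\fT$ if and only if $\tauoo(c_{\ga_n}T_{\ga_n})$ converges in $\cpmoo$, and since $\fT=\overline{\fTf}$ is closed in the Polish space $\cpmoo$, relative compactness of $A$ in $\fT$ is the same as relative compactness of $\set{\tauoo(c_\ga T_\ga)}$ in $\cpmoo$, which by Prohorov is tightness. The equivalence \ref{TCtau}$\iff$\ref{TCrho} is just the definition of tightness of a family of distributions versus tightness of the corresponding random variables, since $\tauoo(c_\ga T_\ga)=\cL\bigpar{\rhooo(\xi\nnga_1,\xi\nnga_2,\dots;c_\ga T_\ga)}$.

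Next I would show \ref{TCrho}$\iff$\ref{TC2}. The direction $\Rightarrow$ is immediate: the map $\Moo\to\bbR$ sending a matrix to its $(1,2)$ entry is continuous, so tightness of the matrices pushes forward to tightness of $c_\ga d_\ga(\xi\nnga_1,\xi\nnga_2)$. For $\Leftarrow$, suppose $\set{c_\ga d_\ga(\xi\nnga_1,\xi\nnga_2)}$ is tight; by symmetry (the $\xi\nnga_i$ are \iid) every entry $c_\ga d_\ga(\xi\nnga_i,\xi\nnga_j)$, $i\neq j$, has the same distribution, hence the family of each fixed entry is tight, hence — since a subset of $\Moo=\bbR^{\bbN\times\bbN}$ is relatively compact iff each coordinate family is — the whole matrix family is tight. (Here one uses that a set in a countable product of $\bbR$'s is relatively compact iff its projections to each factor are bounded, i.e.\ relatively compact; combined with Prohorov applied coordinatewise this gives tightness of the product.) For the long-dendron case one checks the same argument works with $d_D$ in place of the graph distance, noting $d_D\bigpar{(x,a),(y,b)}=d(x,y)+a+b\ge 0$ and the diagonal entries are $0$ by the convention \eqref{rhor}, so again relative compactness of the matrix family is equivalent to tightness of a single off-diagonal entry.

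Finally the equivalence with \ref{TC1}. For \ref{TC1}$\Rightarrow$\ref{TC2}: by the triangle inequality $c_\ga d_\ga(\xi\nnga_1,\xi\nnga_2)\le c_\ga d_\ga(\xi\nnga_1,x_\ga)+c_\ga d_\ga(x_\ga,\xi\nnga_2)$, and the two summands are \iid{} copies of a tight family, so their sum is tight (if $\P(c_\ga d_\ga(\xi\nnga_1,x_\ga)>M)<\eps$ for all $\ga$ then $\P(c_\ga d_\ga(\xi\nnga_1,\xi\nnga_2)>2M)<2\eps$ for all $\ga$). For \ref{TC2}$\Rightarrow$\ref{TC1}: one natural choice is to take $x_\ga$ to be (a point realizing, or nearly realizing) a "median" of the distribution of $\xi\nnga_1$ in $T_\ga$ — e.g.\ a vertex $x_\ga$ minimizing $\ga\mapsto\E\, c_\ga d_\ga(\xi\nnga_1,x_\ga)$, or a centroid of the finite tree — and then bound $c_\ga d_\ga(\xi\nnga_1,x_\ga)$ in terms of $c_\ga d_\ga(\xi\nnga_1,\xi\nnga_2)$. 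Concretely, if $\xi\nnga_2$ is an independent copy, then for any fixed value $v$ of $\xi\nnga_1$, $\P\bigpar{c_\ga d_\ga(v,\xi\nnga_2)\le c_\ga d_\ga(v,x_\ga)/2}$ is small when $x_\ga$ is chosen so that $\xi\nnga_2$ is unlikely to be within $c_\ga d_\ga(v,x_\ga)/2$ of $v$; averaging over $v$ and using tightness of $c_\ga d_\ga(\xi\nnga_1,\xi\nnga_2)$ forces $c_\ga d_\ga(\xi\nnga_1,x_\ga)$ to be tight. I expect this last step, \ref{TC2}$\Rightarrow$\ref{TC1} — making the choice of $x_\ga$ and the two-point-to-one-point comparison quantitative and uniform in $\ga$ — to be the main obstacle; the cleanest route is probably: pick $x_\ga$ with $\P\bigpar{c_\ga d_\ga(\xi\nnga_1,x_\ga)\ge t}\le 2\,\P\bigpar{c_\ga d_\ga(\xi\nnga_1,\xi\nnga_2)\ge t/2}$ for all $t$, which can be arranged by letting $x_\ga$ be a realization of $\xi\nnga_1$ conditioned to lie in a set of $\mu_\ga$-measure $\ge 1/2$ suitably chosen, and then invoke tightness of the right-hand family. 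The measured-real-tree and long-dendron versions go through verbatim, using $\mu_\ga$ resp.\ $\nu_\ga$ and $d_D$, the only point to check being that the triangle inequality still holds for $d_D$, which is immediate from \eqref{dD}.
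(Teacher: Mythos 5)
Your proposal is correct and follows essentially the same route as the paper: Prohorov's theorem for \ref{TCa}$\iff$\ref{TCtau}$\iff$\ref{TCrho}, continuity of the $(1,2)$-entry projection for \ref{TCrho}$\implies$\ref{TC2}, symmetry plus the product topology of $\Moo$ (with the summable union bound over entries) for \ref{TC2}$\implies$\ref{TCrho}, and the triangle inequality for \ref{TC1}$\implies$\ref{TC2}. The only divergence is \ref{TC2}$\implies$\ref{TC1}, where the paper simply selects $x_\ga$ by Fubini from the two-point estimate, while you choose a median-type point whose ball of uniformly bounded radius has measure at least $\tfrac12$; your version is slightly longer but has the merit of making explicit that one fixed $x_\ga$ works for every $\eps$ (a uniformity the paper's one-line argument leaves implicit), at the cost of a harmless quantifier slip (your inequality with $t/2$ need only hold for $t$ large, or with a shift by the fixed radius).
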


\begin{proof}
\ref{TCa}$\iff$\ref{TCtau}$\iff$\ref{TCrho}: 
Prohorov's theorem, 
together with  the definition of convergence and \refT{Too}.

\ref{TCrho}$\implies$\ref{TC2}: 
Immediate by \eqref{rhor}, since the mapping $(a_{ij})_{i,j}\mapsto a_{1,2}$
is continuous $\Moo\to\bbR$.

\ref{TC2}$\implies$\ref{TCrho}: 
Follows by symmetry and the fact that $\Moo$
has the product topology.
To be more precise, let $\eps>0$. By \ref{TC2}, there exist constants $C_k$,
$k\ge0$,
such that $\P\bigpar{c_\ga d_\ga(\xi\nnga_1,\xi\nnga_2)>C_k} < 2^{-k}\eps$
for every $\ga$. Then $K:=\set{(a_{ij})_{i,j}:|a_{ij}| \le C_{i+j}}$ is a
compact subset of $\Moo$, and, by symmetry,
\begin{align}\label{moore} 
  \P\bigpar{\rhooo(\xi\nnga_1,\xi\nnga_2\dots; c_\ga T_\ga)\notin K}
&
\le \sum_{i,j=1}^\infty\P\bigpar{c_\ga d_\ga(\xi\nnga_i,\xi\nnga_j)>C_{i+j} }
\notag\\&
< \sum_{i,j=1}^\infty2^{-i-j}\eps=\eps.
\end{align}

\ref{TC2}$\implies$\ref{TC1}: 
If $C_\eps$ is such that
$\P\bigpar{c_\ga d_\ga(\xi\nnga_1,\xi\nnga_2)>C_\eps} <\eps$, then
(by Fubini's theorem), there exists $x_\ga$ such that
$\P\bigpar{c_\ga d_\ga(\xi\nnga_1,x_\ga)>C_\eps} <\eps$.

\ref{TC1}$\implies$\ref{TC2}: 
If $C_\eps$ is such that
$\P\bigpar{c_\ga d_\ga(\xi\nnga_1,x_\ga)>C_\eps} <\eps/2$,
then
$\P\bigpar{c_\ga d_\ga(\xi\nnga_1,\xi\nnga_2)>2C_\eps} <\eps$.
\end{proof}

\begin{definition}\label{Dtight}
  A set 
of rescaled trees, measured real trees, or long dendrons, is \emph{tight} if
\ref{TC2} (or, equivalently, \ref{TC1}) in \refT{TC} holds.
\end{definition}

With this definition, \refT{TC} simply says that a set of rescaled trees,
measured 
real trees or long dendrons is relatively compact if and only if it is tight.
Usually, we consider sequences rather than general sets, and then \refT{TC}
has the following corollary.
\begin{corollary}
  \label{CC}
If $(c_nT_n)_n$ is a tight sequence of rescaled trees, then
some subsequence converges to some long dendron.

The same holds for tight sequences of measured real trees and for tight
sequences of long dendrons.
\end{corollary}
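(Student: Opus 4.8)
The plan is to derive everything from \refT{TC}. Suppose $(c_nT_n)_n$ is a tight sequence of rescaled trees. By \refD{Dtight}, tightness means that condition \ref{TC2} of \refT{TC} holds for the set $\set{c_nT_n:n\ge1}$; hence, by the equivalence \ref{TC2}$\iff$\ref{TCa} in that theorem, the set $\set{c_nT_n:n\ge1}$ is relatively compact in $\fT$. By the definition of relative compactness recalled at the start of \refS{Scompact}, every sequence in a relatively compact set has a convergent subsequence; applying this to the sequence $(c_nT_n)_n$ itself, we obtain a subsequence $(c_{n_k}T_{n_k})_k$ that converges in $\fT$, say to a point $\mu\in\fT$. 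By \refT{TDT} (or equivalently by the identification of $\fT$ with $\fD$ discussed after it), every element of $\fT$ is of the form $\tauoo(D)$ for some long dendron $D$; thus $\mu=\tauoo(D)$, and convergence of $c_{n_k}T_{n_k}$ to $\mu$ in $\fT$ is, by \refT{Too} and \refD{D3}, exactly convergence of the rescaled trees $c_{n_k}T_{n_k}$ to the long dendron $D$. This proves the first assertion.

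For tight sequences of measured real trees $(T_n,d_n,\mu_n)_n$, the argument is verbatim the same: by the ``mutatis mutandis'' part of \refT{TC}, tightness is again equivalent to relative compactness in $\fT$ (using $\fTr\subseteq\fT$ from \refT{TTT}), so a convergent subsequence exists, and its limit in $\fT$ is $\tauoo(D)$ for some long dendron $D$ by \refT{TDT}; by \refT{TD0}, convergence of $\tauoo(T_{n_k})$ to $\tauoo(D)$ in $\cpmoo$ is convergence of the measured real trees to $D$ in the sense of \refD{D3}. Likewise for tight sequences of long dendrons $(D_n)_n$: tightness is equivalent to relative compactness of $\set{\tauoo(D_n)}$ in $\fT$, a convergent subsequence $\tauoo(D_{n_k})\to\mu\in\fT$ exists, $\mu=\tauoo(D)$ for some long dendron $D$, and then $D_{n_k}\to D$ in $\fD$ by the equivalence \ref{TD0a}$\iff$\ref{TD0b} of \refT{TD0}.

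There is really no obstacle here beyond bookkeeping: the corollary is an immediate specialization of \refT{TC} from arbitrary index sets to sequences, combined with the fact (\refT{TDT}) that the closed set $\fT\subseteq\cpmoo$ consists precisely of the images $\tauoo(D)$ of long dendrons, so that any limit point automatically ``is'' a long dendron. The only point meriting a word of care is the distinction between a sequence having a convergent subsequence and the set of its terms being relatively compact; these agree because relative compactness of a set is equivalent to compactness of its closure, and in a metric space compactness is sequential compactness. Thus the proof is a short chain of citations to \refTs{TC}, \refand{TDT}{TD0} (and \refT{TTT} for the real-tree case).
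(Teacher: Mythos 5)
Your proposal is correct and is essentially the argument the paper intends (the corollary is stated without proof as an immediate consequence of \refT{TC}): tightness gives relative compactness in $\fT$, hence a subsequential limit in $\fT$, which by \refT{TDT} (equivalently \refT{ET1}) is $\tauoo(D)$ for some long dendron $D$. Your handling of the three cases and of the set-versus-sequence point matches the paper's intended reasoning.
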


\section{Simple examples}\label{Sex}

As a preparation for the study of limits of random trees in
the following sections,
we give here a few simple examples of limits of deterministic trees.

\begin{example}[paths]\label{EPn}
  Let $P_n$ be the path with $n$ vertices. We may take the vertices to be
  \set{1,\dots,n}, and then $d_{P_n}(x,y)=|x-y|$.

Let $I=(I,d,\mu)$ be the unit interval $I:=\oi$ considered as a measured
real tree with the usual metric $d$ and Lebesgue measure $\mu$.
We regard $I$ as a long dendron as in \refE{ED=T},
and  claim that $\frac{1}{n} P_n\to I$.

To see this, let $(\xi_i)_i$ be \iid{} with $\xi_i\sim\mu$,
and let $\xi\nn_i:=\ceil{n\xi_i}$.
Then $(\xi\nn_i)_i$ are \iid{} uniform vertices of $P_n$, and
$\frac{1}{n}\xi\nn_i\to\xi_i$ as \ntoo.
Hence, \eqref{rhor} shows that 
\begin{align}
  \rho_r\bigpar{\xi\nn_1,\dots,\xi\nn_r;\tfrac{1}{n}P_n}
\asto \rho_r\bigpar{\xi_1,\dots,\xi_r;I}
\end{align}
for every $r$. This implies convergence in distribution, and thus
$\tau_r\bigpar{\tfrac1nP_n}\to\tau_r(I)$, and thus
\begin{align}\label{Pn}
  \frac{1}{n}P_n\to I.
\end{align}

The diameter of $P_n$ is $n-1$. Obviously, we obtain the same limit $I$
if we use the \ETn{} $\frac{1}{n-1}P_n$.
(Note that the limit $I$ is a short dendron.)
\end{example}

\begin{example}[stars]\label{ESn}
Let $S_n=K_{n-1,1}$ be a star with $n$ vertices.
If $(\xi\nn_i)_i$ are \iid{} random vertices in $S_n$, then with probability
$1-O(1/n)$, $\xi\nn_1$ and $\xi\nn_2$ are distinct peripheral vertices, and
thus $d(\xi\nn_1,\xi\nn_2)=2$. Hence,
$d(\xi\nn_1,\xi\nn_2)\pto 2$ as \ntoo, and thus \refT{TOO} shows that
\begin{align}\label{Sn}
  S_n\to \OO_1.
\end{align}

Of course, we can use the \ETn{} and consider
$\frac12S_n$, which has diameter 1, and obtain the equivalent result
$\frac12S_n\to\OO_{1/2}$. 
\end{example}

\begin{example}[complete binary trees]\label{EBn}
Let $B_n$ be a complete binary tree with height $n-1$ and thus $2^n-1$ vertices.
Let $o$ be the root and let
$h(x):=d(x,o)$ (known as the depth of $x$) 
denote the distance from a vertex $x$ to the root.

If $(\xi\nn_i)_i$ are \iid{} random vertices in $B_n$, then for $0\le k<n$,
\begin{align}\label{bn1}
  \P\bigpar{h(\xi\nn_i)<n-k} = \frac{2^{n-k}-1}{2^n-1}\le 2^{-k}.
\end{align}
Since also $h(\xi\nn_i)<n$,
it follows that 
\begin{align}\label{bn2}
\frac{1}n h(\xi\nn_i)\pto1.  
\end{align}
Recall that $x\bmin y$ denotes the last common ancestor of $x,y\in B_n$.
If $h(x\bmin y)\ge k$, then $x$ and $y$ are both descendants of one of the
$2^k$ vertices $z$ with depth $k$. 
For each $z$, the number of such $x$ (or $y$) is $2^{n-k}-1$. Hence,
\begin{align}\label{bn3}
  \P\bigpar{h(\xi\nn_1\bmin\xi\nn_2)\ge k}
=2^k\frac{(2^{n-k}-1)^2}{(2^n-1)^2}
\le 2^{-k}, \qquad k<n.
\end{align}
Consequently,
\begin{align}\label{bn4}
\frac{1}{n} h(\xi\nn_1\bmin\xi\nn_2)\pto0.
\end{align}
Since $d(x,y)=h(x)+h(y)-2h(x\bmin y)$ for $x,y\in B_n$, 
\eqref{bn2} and \eqref{bn4} imply
\begin{align}
\frac{1}{n} d(\xi\nn_1,\xi\nn_2)
=\frac{1}{n} h(\xi\nn_1)
+ \frac{1}{n} h(\xi\nn_2)
-\frac{2}{n} h(\xi\nn_1\bmin\xi\nn_2)
\pto 1+1-0=2.
\end{align}
Consequently, \refT{TOO} yields
\begin{align}\label{Bn}
  \frac{1}{n}B_n\to \OO_1.
\end{align}
We see that \eqref{Bn} encapsulates (and formalizes) the fact that almost
all pairs of vertices in $B_n$ have distance $\approx 2n$.

Recall that $B_n$ has $N:=2^n-1$ vertices. Thus, \eqref{Bn} can also be written
\begin{align}\label{Bn2}
  \frac{1}{\log N} B_n\to\OO_{1/\log 2}.
\end{align}

The results extend to complete $b$-ary trees $T^b_n$, for any $b\ge2$, with
$N=(b^{n}-1)/(b-1)$ nodes. In this case,
\begin{align}\label{Bnb}
  \frac{1}{\log N} T^b_n\to\OO_{1/\log b}.
\end{align}
\end{example}

\begin{example}[superstars]\label{Esuper}
Let $T_n$ consist of a central vertex $o$ with $n$ paths attached: 
$N_{kn}$ paths with $k$ edges for $k\ge1$, 
all having $o$ as one endpoint but otherwise
disjoint, for some numbers $N_{kn}\ge0$ with $\sum_k N_{kn}=n$.
The number of vertices is thus $|T_n|=1+\sum_k k N_{kn}$.
We assume that as \ntoo, for some $p_k\ge0$ with $\sumk p_k=1$,
\begin{align}\label{ss0}
\frac{  N_{kn}}n \to p_k, \qquad k\ge1,
\end{align}
and 
\begin{align}\label{ss1}
  \sum_k  k\frac{N_{kn}}n \to \gam:=\sumk kp_k <\infty.
\end{align}
Thus 
\begin{align}\label{ssE}
|T_n|\sim\gam n.  
\end{align}
Suppose further (this actually follows from the other assumptions)
that
\begin{align}\label{ss2}
  \sum_k  k^2\frac{N_{kn}}n=o(n).
\end{align}
Let $(\xi\nn_i)_i$ be \iid{} uniformly random vertices of $T_n$.
It follows from the assumptions above that, for  $k\ge1$,
\begin{align}\label{ssa}
  \P\bigpar{d(\xi\nn_i,o)=k} =\frac{ \sum_{j\ge k} N_{jn}}{|T_n|}
\to\frac{\sum_{j\ge k} p_j}{\gam} =:q_k.
\end{align}
Note that 
\begin{align}\label{ssq}
  \sumk q_k = \frac{\sumk \sum_{j\ge k} p_j}{\gam} 
=\frac{\sumj jp_j}{\gam} =1.
\end{align}
Let $\nu$ be the probability distribution on $\bbN$ given by $\nu\set
k=q_k$,
and let $(\xi_i)_i$ be \iid{} with $\xi_i\sim\nu$.
Then, \eqref{ssa} shows that 
\begin{align}\label{tess}
d(\xi\nn_i,o)\dto\xi_i,
\qquad  \ntoo.  
\end{align}
Furthermore, for any $i,j\ge1$, by \eqref{ss2} and \eqref{ssE},
as in the special case in \refE{ESn},
\begin{align}\label{tekk}
&  \P\bigpar{d(\xi\nn_i,\xi\nn_j)\neq d(\xi\nn_i,o)+d(\xi\nn_i,o)}
\notag\\&\quad
= \P\bigpar{\text{$\xi\nn_i$ and $\xi\nn_j$ are in the same path}}
= \frac{\sum_k k^2 N_{kn} }{|T_n|^2}\to 0.
\end{align}
It follows from \eqref{tess} and \eqref{tekk} that for any $r\ge1$,
\begin{align}\label{tokk}
  \rho_r\bigpar{\xi\nn_1,\dots\xi\nn_r;T_n}
\dto \bigpar{(\xi_i+\xi_j)\indic{i\neq j}}_{i,j=1}^r
=\rho\bigpar{\xi_1,\dots,\xi_r;\OO_\nu}.
\end{align}
Hence,
\begin{align}\label{Tn}
  T_n \to \OO_\nu.
\end{align}
\end{example}

\section{Limits of random trees}\label{SRT}

In the rest of the paper we consider limits of random (finite) trees.
Suppose that $\ctn$, $n\ge1$, are random trees (with any distributions)
and let, conditioned on
$\ctn$, $(\xi\nn_i)_i$ be \iid{} uniformly random vertices of $\ctn$.
We are concerned with limits in distribution or probability of $c_n\ctn$
to some random or deterministic long dendron (tree limit). (Here, $c_n$ are
some given positive numbers.)
By the definitions above, this is equivalent to convergence of the
conditional distributions
\begin{align}\label{boa}
  \tauoo(c_n\ctn)=\cL\bigpar{\rhooo(\xi\nn_1,\xi\nn_2,\dots;c_n\ctn)\mid\ctn},
\end{align}
regarded as random elements of $\cP(\Moo)$;
we thus want to show
either that $\tauoo(c_n\ctn)$ converges in distribution to $\tauoo(D)$ for a
random long dendron $D$, or (as a special case) that it converges in
probability to 
$\tauoo(D)$ for a fixed $D$.

\begin{remark}
  It is important that we consider randomness in two steps: first $\ctn$ is a random
  tree and then $(\xi\nn_i)_i$ are random vertices in $\ctn$. As seen in
  \eqref{boa}, we are interested in the
\emph{quenched} version, where we first sample $\ctn$ and then condition on
$\ctn$.

The alternative \emph{annealed} version considers $\ctn$ and $(\xi\nn_i)_i$
as random together; the annealed distribution of 
$\rhooo(\xi\nn_1,\xi\nn_2,\dots;c_n\ctn)$ is the mean 
(or intensity) $\E\tauoo(c_n\ctn)$
of the random measure in \eqref{boa},
which in general is not what we want.
\end{remark}

We note one simple case where the difference between quenched and annealed
disappears. 

\begin{theorem}\label{TOOX}
Let  $(\ctn)_n$ be a sequence of rescaled random trees and 
$c_n$ some positive numbers.
Let further $a\ge0$.
Then the following are equivalent,
where $d_n$ is the graph distance in $\ctn$ and
$(\xi\nn_i)_i$ are \iid{}
uniformly random vertices in $\ctn$.
\begin{romenumerate}
  
\item \label{TOOXa}
$c_nT_n\pto \OO_a$.
\item \label{TOOXb}
\begin{align}\label{tooxb}
  c_nd_n(\xi\nn_1,\xi\nn_2)\pto 2a.
\end{align}

\item \label{TOOXc}
For every $\eps>0$,
\begin{align}\label{tooxc}
\P\bigsqpar{|c_nd_n(\xi\nn_1,\xi\nn_2)- 2a|>\eps\mid\ctn} \pto0.
\end{align}
\end{romenumerate}
\end{theorem}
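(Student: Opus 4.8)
The statement is an equivalence among three conditions about a sequence of random trees $\ctn$. I would prove it by establishing the cycle \ref{TOOXc}$\implies$\ref{TOOXb}$\implies$\ref{TOOXa}$\implies$\ref{TOOXc}, using \refT{TOO} (the deterministic characterization of convergence to $\OO_a$) as the main tool to pass between the conditional statements and the actual convergence $c_nT_n\pto\OO_a$.

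\emph{First}, \ref{TOOXc}$\implies$\ref{TOOXb} is the easy direction: \eqref{tooxc} says that a certain conditional probability tends to $0$ in probability; taking expectations and using bounded convergence gives that the corresponding unconditional probability $\P\bigpar{|c_nd_n(\xi\nn_1,\xi\nn_2)-2a|>\eps}$ tends to $0$ for every $\eps>0$, which is exactly \eqref{tooxb}.

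\emph{Second}, for \ref{TOOXa}$\iff$\ref{TOOXc} I would unwind the definitions. By \eqref{boa}, $c_nT_n\pto\OO_a$ means $\tauoo(c_n\ctn)\pto\tauoo(\OO_a)$ in $\cP(\Moo)$, where $\tauoo(\OO_a)=\gd_{M}$ is the point mass at the constant matrix $M=\bigpar{2a\indic{i\neq j}}$ of \eqref{mom}. The key observation is that convergence in probability of the random measures $\tauoo(c_n\ctn)$ to a \emph{fixed} measure $\tauoo(\OO_a)$ is equivalent to $\P\bigpar{d\bigpar{\tauoo(c_n\ctn),\tauoo(\OO_a)}>\eps}\to0$ for every $\eps>0$, where $d$ is a metric on $\cpmoo$. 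Now for each fixed realization of $\ctn$, \refT{TOO} applied to the deterministic rescaled tree $c_n\ctn$ tells us that the quantity $d\bigpar{\tauoo(c_n\ctn),\tauoo(\OO_a)}$ being small is, up to changing $\eps$, the same as the conditional probability $\P\bigsqpar{|c_nd_n(\xi\nn_1,\xi\nn_2)-2a|>\eps\mid\ctn}$ being small. More precisely, since $\tauoo(\OO_a)$ is a point mass, $\tauoo(c_n\ctn)$ is close to it in the Prohorov metric iff the $\tauoo(c_n\ctn)$-mass outside a small neighbourhood of $M$ is small, and by \eqref{rhor}--\eqref{mom} and symmetry (exactly as in the proof of \refT{TOO}, via the estimate \eqref{moore}) that mass is controlled by, and controls, $\P\bigsqpar{|c_nd_n(\xi\nn_1,\xi\nn_2)-2a|>\eps\mid\ctn}$. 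So \ref{TOOXa} and \ref{TOOXc} are equivalent.

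\emph{Third}, to close the cycle I need \ref{TOOXb}$\implies$\ref{TOOXc}. This is the step I expect to be the main obstacle, because it requires upgrading an unconditional statement to a conditional (quenched) one — \ie, showing that almost all of the conditional probability mass is concentrated near $2a$, not just the average. The trick is to use the \emph{second moment} / independence: given $\ctn$, let $p_n(\ctn):=\P\bigsqpar{|c_nd_n(\xi\nn_1,\xi\nn_2)-2a|>\eps\mid\ctn}$, and consider a second independent pair $\xi\nn_3,\xi\nn_4$ of uniform vertices; then $\E\bigsqpar{p_n(\ctn)^2}$ is the unconditional probability that \emph{both} pairs are bad, while $\E\bigsqpar{p_n(\ctn)}$ is the probability that the first pair is bad. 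By \eqref{tooxb} the latter tends to $0$; one shows the former is asymptotically at most (a constant times) the latter — intuitively because two independent pairs of uniform vertices behave like one pair up to lower-order corrections, or more simply by noting that $p_n(\ctn)\le1$ so $\E[p_n^2]\le\E[p_n]\to0$. Hence $p_n(\ctn)\pto0$ by Chebyshev (or directly, since $\E[p_n]\to0$ and $p_n\ge0$ already give $p_n\pto0$). Actually this last remark shows the implication is nearly immediate: $0\le p_n\le1$ and $\E[p_n]\to0$ force $p_n\pto0$, so \ref{TOOXb}$\implies$\ref{TOOXc} without any second-moment argument. Thus the only genuinely delicate point is making the equivalence \ref{TOOXa}$\iff$\ref{TOOXc} rigorous at the level of metrics on $\cpmoo$, and for that one simply invokes \refT{TOO} pointwise in $\ctn$ together with the remark that convergence in probability to a constant is equivalent to convergence in probability of the distance to $0$.
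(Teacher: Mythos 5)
Your proof is correct, and it reaches the same basic conclusion by a slightly different route than the paper. The easy halves agree in substance: your observation that the conditional probabilities in \eqref{tooxc} are bounded by $1$, so that \ref{TOOXb}$\iff$\ref{TOOXc} follows from taking expectations in one direction and Markov's inequality in the other, is exactly the elementary mechanism the paper uses (there phrased through the characterization $Z_n\pto0\iff\E[|Z_n|\bmin1]\to0$ and its conditional version); you are also right that no second-moment argument is needed. The difference lies in \ref{TOOXa}$\iff$\ref{TOOXc}: you work directly with the Prohorov metric on $\cpmoo$, using that $\tauoo(\OO_a)$ is a point mass and that closeness of $\tauoo(c_n\ctn)$ to it is controlled, pointwise in $\ctn$, by the conditional probability in \eqref{tooxc} via symmetry and a union bound as in \eqref{moore}. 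That is a valid argument, but note one small point of rigor: you cannot literally invoke \refT{TOO} ``pointwise in $\ctn$'', since that theorem is a statement about sequences; what you actually need (and do sketch) is the quantitative two-sided estimate relating the Prohorov distance to $\delta_M$ and the conditional probability, with explicit dependence between the two $\eps$'s, after which the in-probability equivalence follows. The paper avoids this metric bookkeeping entirely: it notes that for deterministic trees \refT{TOO} makes convergence to $\OO_a$ equivalent to $\E[|c_nd_n(\xi\nn_1,\xi\nn_2)-2a|\bmin1]\to0$, so \ref{TOOXa} is equivalent to the conditional expectation $\E[|c_nd_n(\xi\nn_1,\xi\nn_2)-2a|\bmin1\mid\ctn]\pto0$, and then both \ref{TOOXb} and \ref{TOOXc} are read off from this single bounded quantity by two more applications of the same $\pto$ characterization. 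Your version buys a more explicit picture of what convergence in the space $\fT\subset\cpmoo$ means; the paper's buys brevity and independence of any choice of metric.
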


\begin{proof}
Recall that for any random variables $Z_n$,
\begin{align}\label{pto0}
  Z_n\pto0 \iff \E \bigsqpar{|Z_n|\bmin1}\to0.
\end{align}
Thus,
  for deterministic trees $T_n$, the convergence in probability \eqref{too}
is equivalent to
\begin{align}\label{boc}
\E\bigsqpar{| c_nd_n(\xi\nn_1,\xi\nn_2)- 2a|\land1}\to0.
\end{align}
Consequently, by \refT{TOO}, \ref{TOOXa} is equivalent to
\begin{align}\label{bod}
\E\bigsqpar{| c_nd_n(\xi\nn_1,\xi\nn_2)- 2a|\land1\mid\ctn}\pto0.
\end{align}
A simple argument using Markov's inequality shows that \eqref{bod} is
equivalent to \ref{TOOXc}. 
(This argument is a conditional version of \eqref{pto0}.)

Furthermore,
since the \lhs{} of \eqref{bod} is bounded by 1,
\eqref{pto0} shows that \eqref{bod} is equivalent to
\begin{align}\label{boe}
\E\bigsqpar{| c_nd_n(\xi\nn_1,\xi\nn_2)- 2a|\land1}\pto0,
\end{align}
which by a final application of \eqref{pto0} is equivalent to \ref{TOOXb}.
\end{proof}

We give also a version of the compactness criterion in \refT{TC} for random
trees.
We  state the theorem for a sequence of random trees,
although the statement and proof holds for an arbitrary set.

\begin{theorem}\label{TCX}
  Let $\xpar{c_n \ctn}_n$ be a sequence of rescaled random trees.
Then the following are equivalent, where 
$(\xi\nn_i)_i$ are 
\iid{} 
uniformly random vertices in $\ctn$
and $d_n$ is the graph distance in $\ctn$.
\begin{romenumerate}
  
\item \label{TCXa}
The sequence $(c_n\ctn)_n$
of random elements of\/ $\fT$ is relatively compact in $\cP(\fT)$.
\item \label{TCXb}
The sequence $(c_n\ctn)_n$
of random elements of\/ $\fT$ is tight.
\item \label{TCX2}
The sequence of random variables $\bigpar{c_n d_n(\xi\nn_1,\xi\nn_2)}_n$ 
is tight.
\end{romenumerate}
\end{theorem}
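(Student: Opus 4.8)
The plan is to reduce the statement about random trees (random elements of $\fT$) to the deterministic compactness criterion of \refT{TC} via Prohorov's theorem applied in the Polish space $\fT$, together with a Fubini/expectation argument connecting tightness of random tree limits to tightness of the doubled-distance random variables $c_nd_n(\xi\nn_1,\xi\nn_2)$.

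First I would observe that \ref{TCXa}$\iff$\ref{TCXb} is immediate from Prohorov's theorem: $\fT$ is Polish (established in \refS{Sabs}), so relative compactness in $\cP(\fT)$ of the laws $\{\cL(c_n\ctn)\}_n$ is equivalent to tightness of the family $\{c_n\ctn\}_n$ of random elements of $\fT$. The content is therefore \ref{TCXb}$\iff$\ref{TCX2}. For the direction \ref{TCXb}$\implies$\ref{TCX2}, I would use that tightness in $\fT$ gives, for each $\eps>0$, a compact $\cK\subseteq\fT$ with $\P(c_n\ctn\notin\cK)<\eps$ for all $n$; by \refT{TC} applied to the (deterministic) set $\cK$ of tree limits, the family $\{c_\ga d_\ga(\xi_1,\xi_2):$ the tree limit lies in $\cK\}$ is tight, i.e.\ there is $C_\eps$ with $\P[\,c d(\xi_1,\xi_2)>C_\eps\mid \text{tree}\in\cK\,]<\eps$ uniformly; combining the two estimates gives $\P\bigpar{c_nd_n(\xi\nn_1,\xi\nn_2)>C_\eps}\le\P(c_n\ctn\notin\cK)+\sup_{\text{tree}\in\cK}\P[c d(\xi_1,\xi_2)>C_\eps]<2\eps$, which is tightness of $\bigpar{c_nd_n(\xi\nn_1,\xi\nn_2)}_n$. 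Here I must be slightly careful about measurability of the relevant conditional probabilities as functions on $\fT$; this is handled because, by \refT{TD0}, the map sending a tree limit to the law of its sampled distance matrix is continuous, hence Borel, and $c d(\xi_1,\xi_2)$ is the pushforward under the continuous coordinate map $(a_{ij})\mapsto a_{12}$.

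For the converse \ref{TCX2}$\implies$\ref{TCXb}, I would run the argument in the proof of \refT{TC} ``under the integral sign.'' Given $\eps>0$, tightness of $\bigpar{c_nd_n(\xi\nn_1,\xi\nn_2)}_n$ yields constants $C_k$ with $\P\bigpar{c_nd_n(\xi\nn_1,\xi\nn_2)>C_k}<4^{-k}\eps^2$ for all $n$. Conditioning on $\ctn$: the quenched probability $p_n:=\P\bigsqpar{c_nd_n(\xi\nn_1,\xi\nn_2)>C_k\mid\ctn}$ satisfies $\E p_n<4^{-k}\eps^2$, so by Markov $\P(p_n>2^{-k}\eps)<2^{-k}\eps$. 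Thus with probability $\ge 1-2^{-k}\eps$ the random tree $c_n\ctn$ lies in the set $A_k:=\{$rescaled trees with $\P[c d(\xi_1,\xi_2)>C_k\mid\text{tree}]\le 2^{-k}\eps\}$; intersecting over $k\ge1$ shows $c_n\ctn\in\bigcap_k A_k$ with probability $\ge 1-\sum_k 2^{-k}\eps=1-\eps$. Finally I would check that the closure of $\bigcap_k A_k$ in $\fT$ is compact: by the symmetrization/union estimate \eqref{moore} in the proof of \refT{TC}, every tree limit in $\bigcap_k A_k$ has its sampled matrix landing in the fixed compact set $K=\{(a_{ij}):|a_{ij}|\le C'_{i+j}\}\subseteq\Moo$ outside a set of measure $\le\eps$ (after relabelling the $C_k$ to absorb the symmetrized sum), which is exactly the tightness condition \ref{TCtau} of \refT{TC}; hence $\bigcap_k A_k$ is relatively compact in $\fT$, and its closure provides the compact set witnessing tightness of $(c_n\ctn)_n$.

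I expect the main obstacle to be the bookkeeping in the converse direction: correctly choosing the double-indexed constants $C_k$ (or $C_{i+j}$) so that, after passing to quenched probabilities, applying Markov's inequality, and taking the countable intersection over $k$, the resulting set of tree limits is genuinely relatively compact in $\fT$ — that is, making the ``expectation version'' of the estimate \eqref{moore} go through uniformly. The measurability points (that the maps ``tree limit'' $\mapsto$ ``conditional tail probability'' are Borel on $\fT$, so that all the events used are measurable) are routine given \refT{TD0}, but should be mentioned. None of the steps requires the deep results of \cite{ET} beyond what is already packaged into \refTs{TC} and \ref{TD0} and the Polishness of $\fT$.
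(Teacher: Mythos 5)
Your proof is correct and follows essentially the same route as the paper: Prohorov's theorem in the Polish space $\fT$ for \ref{TCXa}$\iff$\ref{TCXb}, and a Markov/Fubini transfer between the annealed tail bounds on $c_nd_n(\xi\nn_1,\xi\nn_2)$ and the quenched tail bounds encoded in $\tauoo(c_n\ctn)$, with the compactness work done by the deterministic criterion. The only difference is organizational: you invoke \refT{TC} as a black box (applied to a compact subset of $\fT$, resp.\ to the sets $A_k$ of tree limits with uniform quenched tails), whereas the paper inlines the corresponding construction of compact sets in $\Moo$ and $\cP(\Moo)$; both rest on the same Prohorov-plus-projection and union-bound estimates.
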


\begin{proof}
\ref{TCXa}$\iff$\ref{TCXb}: 
  Since $\fT$ is a Polish space, 
this is Prohorov's   theorem.

\ref{TCXb}$\implies$\ref{TCX2}:
By the definitions in \refS{Sabs}, $\fT$ is a closed subspace of $\cP(\Moo)$
and it follows that \ref{TCXb} means that for every $\eps>0$, there
exists a compact set $\cK_\eps\subset\cP(\Moo)$ such that, for every $n\ge1$,
\begin{align}\label{daa}
\P\bigpar{  \tauoo(c_n\ctn)\notin \cK_\eps}<\eps.
\end{align}
Furthermore, Prohorov's theorem (now applied to the Polish space $\Moo$)
shows that for every $\gd>0$, there exists a
compact set $K_{\eps,\gd}\subset\Moo$ such if $\gl\in\cP(\Moo)$,
then
\begin{align}\label{dab}
\gl\in\cK_\eps \implies
  \gl(K_{\eps,\gd})>1-\gd.
\end{align}
Since the projection $(a_{ij})_{ij}\mapsto a_{12}$ is continuous $\Moo\to\bbR$,
there exists a constant $C_{\eps,\gd}$ such that if 
$(a_{ij})_{ij}\in K_{\eps,\gd}$, then $|a_{12}|\le C_{\eps,\gd}$.

Consequently, for every $\eps,\gd>0$, and all $n$,
\begin{align}\label{dac}
{|c_nd_n(\xi\nn_1,\xi\nn_2)|>C_{\eps,\gd}}
\implies
{\rhooo(\xi\nn_1,\dots;c_n\ctn)\notin K_{\eps,\gd}}
\end{align}
and thus, using also \eqref{dab},
\begin{align}\label{dad}
&{\P\bigpar{|c_nd_n(\xi\nn_1,\xi\nn_2)|>C_{\eps,\gd}\mid\ctn}\ge\gd}
\notag\\&\quad
\implies
{\P\bigpar{\rhooo(\xi\nn_1,\dots;c_n\ctn)\notin K_{\eps,\gd}\mid\ctn}\ge\gd}
\notag\\&\quad
\implies
\tauoo(c_n\ctn)=
\cL\bigpar{\rhooo(\xi\nn_1,\dots;c_n\ctn)} \notin \cK_{\eps}.
\end{align}
Hence, \eqref{daa} implies
\begin{align}\label{dae}
\P\Bigpar{\P\bigpar{|c_nd_n(\xi\nn_1,\xi\nn_2)|>C_{\eps,\gd}\mid\ctn}\ge\gd}<\eps
\end{align}
which yields
\begin{align}\label{daf}
\P\bigpar{|c_nd_n(\xi\nn_1,\xi\nn_2)|>C_{\eps,\gd}}
=\E\P\bigpar{|c_nd_n(\xi\nn_1,\xi\nn_2)|>C_{\eps,\gd}\mid\ctn}
\le\gd+\eps.
\end{align}
By taking $\gd=\eps$,
this shows \ref{TCX2}.

\ref{TCX2}$\implies$\ref{TCXb}:
By \ref{TCX2}, for every $\eps>0$, there exists $C_{\eps}$ such that
\begin{align}\label{dag}
\P\bigpar{|c_nd_n(\xi\nn_1,\xi\nn_2)|>C_{\eps}}
<\eps.
\end{align}
Define 
\begin{align}\label{dak}
  K_{\eps}:=\bigset{(a_{ij})_{ij}\in\Moo:|a_{ij}|\le C_{2^{-i-j}\eps}}.
\end{align}
This is a compact subset of $\Moo$, and \eqref{dag} implies
\begin{align}\label{dah}
\P\bigpar{\rhooo(\xi\nn_1,\dots;c_n\ctn)\notin K_{\eps}}
&\le\sum_{i,j=1}^\infty
\P\bigpar{|c_nd_n(\xi\nn_i,\xi\nn_j)|>C_{2^{-i-j}\eps}}
\notag\\&
<\sum_{i,j=1}^\infty2^{-i-j}\eps=\eps.
\end{align}
Hence,
\begin{align}\label{dal}
\E\bigsqpar{\P\bigpar{\rhooo(\xi\nn_1,\dots;c_n\ctn)\notin K_{\eps}\mid\ctn}}
=&{\P\bigpar{|c_nd_n(\xi\nn_1,\xi\nn_2)|\notin K_{\eps}}}<\eps,
\end{align}
and Markov's inequality shows that, for any $\ell\ge1$,
\begin{align}\label{dam}
\P\bigsqpar{
\P\bigpar{\rhooo(\xi\nn_1,\dots;c_n\ctn)\notin K_{4^{-\ell}\eps}\mid\ctn}>2^{-\ell}}
<2^{-\ell}\eps.
\end{align}
By the definition of $\tauoo$, this is the same as
\begin{align}\label{dan}
\P\bigsqpar{\tauoo(c_n\ctn)\bigpar{\Moo\setminus  K_{4^{-\ell}\eps}}>2^{-\ell}}
<2^{-\ell}\eps.
\end{align}

Let
\begin{align}
  \cK_\eps:=\bigset{\gl\in\cP(\Moo):\gl(K_{4^{-\ell}\eps})\ge 1-2^{-\ell}, 
\forall\ell\ge1}
\end{align}
and note that $\cK_\eps$ is compact by Prohorov's theorem.
It follows by \eqref{dan} that
\begin{align}
\P\bigpar{\tauoo(c_n\ctn)\notin \cK_\eps}
  <\suml 2^{-\ell}\eps =\eps.
\end{align}
Hence, the sequence $\tauoo(c_n\ctn)$ is tight in $\cP(\Moo)$, and thus in $\fT$.
\end{proof}

\begin{remark}
Again, 
the same holds, mutatis mutandis, for random measured real trees 
and for random  long dendrons.
In fact, the argument is quite general and holds for any measured metric spaces.
We believe that this may be known, but we do not know a reference and have
included a full proof for completeness.
\end{remark}

\section{Conditioned Galton--Watson trees, I}\label{SGW1}

Consider a \GWp{} with some given offspring distribution $\zeta$.
(We let $\zeta$ denote both the distribution and a random variable with this
distribution.) 
The family tree of the \GWp{} is a random tree $\cT$, which in the
subcritical and critical cases (\ie, when $\E\zeta\le1$) is \as{} finite.
$\cT$ is a \emph{\GWt}, and the random tree $\cT_n:=(\cT\mid |\cT|=n)$
obtained by conditioning $\cT$ on a given size $n$ is said to be a
\emph{\cGWt}. (We consider only $n$ such that $\P(|\cT|=n)>0$.)
For further details, see \eg{} the survey \cite{SJ264}.

In the standard case $\E\zeta=1$ and $\Var\zeta<\infty$,
\citet{AldousI,AldousII,AldousIII} proved convergence 
in distribution
of the \cGWt{}
$\cT_n$, after rescaling, to a limit object called
the \emph{\BCRT}; this is a random measured real tree which we
denote by $\cTex$, for reasons given below. 
Aldous's original result was not in terms of the type of convergence discussed
in the present paper, but it holds in the present context too.
In fact, Aldous's result has been stated in several different forms, more
or less equivalent; one 
version, stated \eg{} in 
\cite[Theorem 8]{HaasMiermont} and \cite[Theorem 5.2]{AddarioEtAl},
is convergence in the
\GHP{} metric
(defined in \eg{} \cite[Chapter 27]{Villani} and \cite[Section 6]{Miermont}), 
which is stronger than \GP{}
convergence 
and thus implies convergence in the tree limit sense used in the present
paper
(see \refR{RGP} and \refE{ED=T}).
We thus have the following.

\begin{theorem}\label{TGW1a}
  Let $\cT_n$ be a \cGWt{} with critical offspring distribution $\zeta$ with
finite variance, \ie, we assume $\E\zeta=1$ and $\gss:=\Var\zeta\in(0,\infty)$.
Then, as \ntoo, 
\begin{align}\label{tgw1a}
  \frac{1}{\sqrt n} \cT_n \dto \frac{1}{\gs} \cTex,
\end{align}
where $\cTex$ is the \BCRT.
\end{theorem}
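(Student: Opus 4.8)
The plan is to deduce \eqref{tgw1a} directly from Aldous's theorem in its \GHP{} form, using the comparison of convergence notions set up in \refS{SET}. Recall from \cite[Theorem 8]{HaasMiermont} and \cite[Theorem 5.2]{AddarioEtAl} that, for $n$ with $\P(|\cT|=n)>0$, one has $\frac{1}{\sqrt n}\cT_n\dto\frac1\gs\cTex$ in the \GHP{} topology, where $\frac1{\sqrt n}\cT_n$ is regarded (as in \refE{Ereal}) as a finite measured real tree and $\cTex$ is the \BCRT{} equipped with its mass measure. Both sides are thus random elements of the set $\cTr$ of measured real trees, carrying the \GHP{} topology.

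The key observation is that the map $\tauoo\colon\cTr\to\fT$ is continuous for the \GHP{} topology on $\cTr$. Indeed, by \refR{RGP}, \GHP{} convergence of measured real trees implies \GP{} convergence, which is the same as $\tau_r(T_n)\to\tau_r(T)$ for every $r\ge1$; since the sampling measures are consistent, this is in turn equivalent to $\tauoo(T_n)\to\tauoo(T)$ in $\cpmoo$ by \eqref{erika} (see also \refT{Too}), i.e.\ to convergence in $\fT$. As $\cTr$ and $\fT$ are metric spaces, sequential continuity is continuity. I would also record that the mass measure of $\cTex$ has full support, so every branch of $\cTex$ has positive measure (a non-leaf point splits $\cTex$ into nondegenerate subtrees, each of positive mass; a leaf leaves a single branch of mass $1$); hence $\frac1\gs\cTex$ defines a long dendron, which we identify with an element of $\fT$ exactly as in \refE{ED=T}, consistently with our identification throughout of measured real trees with their cores.

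It then suffices to apply the continuous mapping theorem to the continuous embedding $\tauoo\colon\cTr\to\fT$: the convergence $\frac1{\sqrt n}\cT_n\dto\frac1\gs\cTex$ in the \GHP{} topology yields $\tauoo\bigpar{\frac1{\sqrt n}\cT_n}\dto\tauoo\bigpar{\frac1\gs\cTex}$ in $\fT$, which by the definition of convergence of (random) trees in $\fT$ (\refSs{Sabs} and \ref{SRT}) is precisely \eqref{tgw1a}. There is no real obstacle here, since all the analytic content sits in Aldous's theorem and in the comparison results already established; the only points needing a word of care are that convergence in distribution is transported along a continuous map, and that the random real tree $\cTex$ genuinely represents an element of $\fT$ — the standard full-support property of the \CRT{} noted above.
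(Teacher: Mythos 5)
Your argument is correct: the GHP statement you quote is exactly what the paper itself invokes in the discussion preceding the theorem, and your chain GHP $\Rightarrow$ \GP{} $\Rightarrow$ convergence of $\tau_r$ for all $r$ $\Rightarrow$ convergence of $\tauoo$ (via \refR{RGP} and \eqref{erika}), combined with the continuous mapping theorem for the map $\tauoo$ on the (Polish, metric) GHP space, does transport convergence in distribution to convergence in $\fT$; your observation that the mass measure of $\cTex$ has full support, so that the limit is a genuine long dendron as in \refE{ED=T}, is correct (and not even strictly needed, since $\fTr\subseteq\fT$ by \refT{TTT}). Where you differ from the paper's written proof: the paper treats the GHP reduction as the "known" justification and then, for completeness, sketches a more self-contained route starting from Aldous's functional limit theorem \eqref{ald} for the contour function, building the limit real tree $\Tx_g$ from the Brownian excursion via \eqref{dst}, and coupling the edge-uniform measure induced by the contour with the vertex-uniform measure $\mu_T$ (an error of at most $1$ with probability $\ge 1-1/n$), which yields the conclusion both in the tree-limit sense and in the stronger \GHP{} metric. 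Your route buys brevity and makes explicit the soft points (continuity of $\tauoo$ for the GHP topology, continuous mapping, measurability of the limit as an element of $\fT$) that the paper leaves implicit; the paper's sketch buys independence from the packaged GHP statements, deriving everything from the excursion convergence, at the cost of the measure-coupling step you avoid.
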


\begin{proof}
  As said before the theorem, this is known.
For completeness, we sketch a proof in the present context; 
omitted details can be found \eg{} in
\eg{} \cite{AldousIII} and \cite{LeGall2005}.

One standard version of Aldous's theorem uses the \emph{contour function}
$C_{\cT_n}(t)$ of $\cT_n$. In general, if $T$ is a rooted tree with $|T|=n$,
then $C_T$ is a continuous function $[0,2(n-1)]\to\ooo$;
informally, $C_T(t)$ is the distance, at time $t$, from the root to a
particle that travels with unit speed along the ``outside'' of the tree,
starting at the root at time $0$ and returning at time $2(n-1)$, having
traversed every edge once in each direction.
\citet{AldousIII} showed that
\begin{align}\label{ald}
  \frac{1}{\sqrt n} C_{\cT_n}(2(n-1)t) \dto \frac{2}{\gs}\Bex(t)
\qquad\text{in }C\oi,
\end{align}
where $\Bex(t)$ is the standard Brownian excursion, which is a random
continuous function $\oi\to\ooo$ with $\Bex(0)=\Bex(1)=0$.

Every continuous function $g:\oi\to\ooo$ with $g(0)=g(1)=0$ 
defines a real tree $\Tx_g$: define a pseudometric on $\oi$ by 
\begin{align}\label{dst}
  d(s,t):=g(s)+g(t)-2\min_{u\in[s,t]}g(u),
\qquad 0\le s\le t\le 1,
\end{align}
and form the quotient of $\oi$ 
modulo the equivalence relation $\set{d(s,t)=0}$;
see \eg{} \cite[Theorem 2.2]{LeGall2005}. 
The uniform (Lebesgue) measure on $\oi$ induces a
measure $\mu$ on $\Tx_g$, making $(T_g,\mu)$ a measured real tree.

Taking $g(t)=C_{T}(2(n-1)t)$ for a rooted tree $T$ with $|T|=n$ 
gives $T_g=\rT$, the real tree obtained from $T$ as in \refE{Ereal}.
The measure $\mu$ induced by $g$ is the uniform measure on the edges of
$\rT$, and not the uniform measure $\mu'$ on the vertices of $T\subseteq\rT$;
however, it is easy to couple these measures and find $\xi\sim\mu$ and
$\xi'\sim\mu'$ such that $\P(|\xi-\xi'|>1)\le 1/n$.
It follows that \eqref{ald} implies \eqref{tgw1a}, both in the sense of
the present paper and in the stronger \GHP{} metric.
\end{proof}

\begin{remark}
  \citet{Duquesne2003} 
considered the case when 
$\zeta$ has infinite variance and furthermore
is in the domain of attraction of a stable
distribution; he extended Aldous's result
and showed convergence of the contour process of $c_n\ctn$ 
(for suitable $c_n$)
to a certain stochastic process in this case too; this implies
convergence of $c_n\ctn$ to a random real tree called the \emph{stable tree}
\cite{LeGall2006} in \GHP{} sense,
and thus in the weaker sense of tree
limits,
also in this case.
(See  \cite[Theorem 8]{HaasMiermont}, 
with a somewhat stronger assumption on $\zeta$.)
\end{remark}

\begin{remark}\label{RGWtEx}
  As is well-known, 
several important classes of random trees can be represented as \cGWt{s}
$\ctn$ satisfying the conditions above
by choosing suitable offspring distributions $\zeta$;
thus \refT{TGW1a} applies to them.
This includes (uniformly) random
labelled trees ($\gss=1$), random ordered trees $\gss=2$) and random binary
trees ($\gss=1/2$); see \eg{} \cite{AldousII} and \cite{SJ264}.
\end{remark}

\begin{remark}\label{Rsgt}
Recall that random \emph{simply generated trees}
are defined by a weight sequence $(w_k)_k$;
see, again, \eg{} \cite{AldousII} or \cite{SJ264}
for the definition and for the well-known fact
that
while simply generated trees   are  more general than \cGWt{s}, 
they can  in many cases
be reduced to
equivalent \cGWt{s}.
Thus \refT{TGW1a} applies to simply generated trees under rather weak
conditions.
In other cases of simply generated trees, the results in \refSs{SGW2} and
\ref{SIII} may apply.
\end{remark}

\section{Conditioned Galton--Watson trees, II}\label{SGW2}

Although a large class of \cGWt{s} (and simply generated trees) are covered
by \refT{TGW1a}, there are also other cases.
One class of \cGWt{s} with a different local limit behaviour 
showing condensation was found by
\citet{JonssonS}; this was generalized  in \cite{SJ264}, with further
results in \cite{Stufler}.
This class of \cGWt{s} (called type II in \cite{SJ264} and \cite{Stufler})
has offspring distributions $\zeta$ satisfying
\begin{align}
 & 0<\kk:=\E\zeta<1,\label{gwIIa}
\\
&\E R^\zeta=\infty, \qquad R>1.\label{gwIIb}
\end{align}
In other words, the \GWt{s} are subcritical, and $\zeta$ has infinite \mgf;
see further \cite[Section 8]{SJ264}.
We will show that this class  has  tree limits 
that are very different from the ones in \refS{SGW1}.

For a rooted tree $T$ and a vertex $v\in T$, let $\ddd(v)$ denote the
outdegree of $v$. Furthermore, let 
$\DDD=\DDD(T):=\max_{v\in T}\ddd(v)$ be the maximum outdegree, and let
$\vvv$ be the vertex with maximum outdegree
(chosen as \eg{} the lexicographically first if there is a tie), so
$\DDD=\ddd(\vvv)$.

It is shown in \cite[Section 19.6]{SJ264} that \eqref{gwIIa}--\eqref{gwIIb}
 imply the
existence (asymptotically) of one or several vertices of very high
(out)degree,
with a total outdegree $\approx(1-\Ezeta)n$;
typically, there is one single large vertex with degree $\approx(1-\Ezeta)n$,
but this is not always the case; see \cite{SJ264} and \refR{R2large}.
We will assume that there is such a vertex; a case known as 
\emph{complete  condensation}. 
To be precise, we assume that $\zeta$ is such that
\begin{align}\label{gwIIc}
  \DDD(\ctn)=(1-\Ezeta)n+\op(n).
\end{align}
For example, this hold when
the offspring distribution 
satisfies 
\eqref{gwIIa}--\eqref{gwIIb}
and has a power law tail, as shown by
\citet[Theorem 5.5]{JonssonS}, see also
\cite[Theorem 19.34]{SJ264} and
(more generally, with regularly varying tails)
\citet[Theorem~1]{Kortchemski}.

We note  that \eqref{gwIIc} implies that the second largest outdegree is
$\op(n)$; in particular the maximum degree vertex $\vvv$ is unique \whp,
see \cite[paragraph after Lemma 19.32]{SJ264}.

\begin{theorem}\label{TGW2}
Let $\ctn$ be a  \cGWt{} with subcritical offspring distribution $\zeta$
satisfying \eqref{gwIIa}--\eqref{gwIIb}
and \eqref{gwIIc}.
Then, as \ntoo,
\begin{align}\label{tgw2}
  \ctn\pto\OO_\nu,
\end{align}
where $\nu=\Ge(1-\Ezeta)$ is a geometric distribution
on $\bbN:=\set{1,2,\dots}$.
\end{theorem}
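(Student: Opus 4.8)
The plan is to verify the definition of convergence in $\fT$ directly. Since $\fT$ carries (the restriction of) the Prohorov metric on $\cP(\Moo)$ and $\tau_r(\ctn)=\Pi_r\bigpar{\tauoo(\ctn)}$, the version of \eqref{erika} for random elements shows that \eqref{tgw2} is equivalent to $\tau_r(\ctn)\pto\tau_r(\OO_\nu)$ in $\cP(M_r)$ for every $r\ge1$, where (recall \refE{EOO}) $\tau_r(\OO_\nu)$ is the law of the matrix $\bigpar{(\xi_i+\xi_j)\indic{i\neq j}}_{i,j=1}^r$ with $\xi_i$ \iid{} $\sim\nu$. The starting point is the structural picture of condensation: by \eqref{gwIIc} and the remark following it, \whp{} there is a unique vertex $\vvv$ of outdegree $\DDD:=\DDD(\ctn)=(1-\kk)n+\op(n)$; let $c_1,\dots,c_\DDD$ be its children, let $T_1,\dots,T_\DDD$ be the subtrees of $\ctn$ rooted at $c_1,\dots,c_\DDD$, and let $R_n:=\ctn\setminus\bigcup_\ell T_\ell$ be the remainder, which contains $\vvv$ and the path from the root to $\vvv$. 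I would invoke the finer analysis of this model in \cite[Section~19.6]{SJ264} (see also \cite{Kortchemski,Stufler}): in the appropriate asymptotic sense $(T_\ell)_{\ell=1}^\DDD$ is a family of $\DDD$ \iid{} \GWt{s} with offspring distribution $\zeta$, and moreover $|R_n|=\op(n)$; in particular $\sum_\ell|T_\ell|=n-\op(n)$, consistent with $\DDD\cdot\E|\cT|=\DDD/(1-\kk)\sim n$.

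From this I would extract two facts. \emph{(a) The profile around $\vvv$ converges.} A \GWt{} with offspring mean $\kk$ has expected generation sizes $\kk^l$, $l\ge0$, so a law of large numbers over the asymptotically \iid{} subtrees gives $\frac1\DDD\sum_\ell\#\set{u\in T_\ell:d_n(u,\vvv)=k}\pto\kk^{k-1}$ for each $k\ge1$ (the summand counts the level-$(k-1)$ vertices of $T_\ell$), whence, using also $|R_n|=\op(n)$ and $\DDD/n\pto1-\kk$,
\begin{align*}
  \frac1n\#\set{v\in\ctn:d_n(v,\vvv)=k}
  &=\frac1n\#\set{v\in R_n:d_n(v,\vvv)=k}+\frac{\DDD}{n}\cdot\frac1\DDD\sum_{\ell=1}^\DDD\#\set{u\in T_\ell:d_n(u,\vvv)=k}\\
  &\pto(1-\kk)\kk^{k-1}=\nu(\set k),
\end{align*}
while trivially $\frac1n\#\set{v:d_n(v,\vvv)=0}\to0$. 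Thus the random empirical measure $\mu_n:=\frac1n\sum_{v\in\ctn}\gd_{d_n(v,\vvv)}$ on $\bbNo$ satisfies $\mu_n(\set k)\pto\nu(\set k)$ for every $k$, and since $\nu$ is a probability measure this already forces $\mu_n\pto\nu$ weakly (no mass can escape to $\infty$, the total mass being $1$). \emph{(b) Independent uniform vertices separate at $\vvv$.} Conditionally on $\ctn$, for $i\neq j$,
\begin{align*}
  \P\bigpar{\xi\nn_i,\xi\nn_j\in R_n\text{ or both in the same }T_\ell\mid\ctn}
  &\le\frac{|R_n|}{n}+\frac1{n^2}\sum_\ell|T_\ell|^2\\
  &\le\frac{|R_n|}{n}+\frac1n\max_\ell|T_\ell| ,
\end{align*}
where $\E|R_n|=o(n)$ and $\E\max_\ell|T_\ell|=o(n)$ (the latter by a standard truncation argument using only $\E|\cT|<\infty$ and $\DDD\le n$); hence this conditional probability is $\op(1)$. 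Summing over the finitely many pairs $i,j\le r$, \whp{} the vertices $\xi\nn_1,\dots,\xi\nn_r$ lie in $r$ distinct subtrees rooted at $r$ distinct children of $\vvv$.

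Now combine. Conditionally on $\ctn$ the $\xi\nn_i$ are \iid{} uniform, so $D\nn_i:=d_n(\xi\nn_i,\vvv)$ are \iid{} with conditional distribution $\mu_n$; and on the event of (b) we have $\xi\nn_i\bmin\xi\nn_j=\vvv$, hence $d_n(\xi\nn_i,\xi\nn_j)=D\nn_i+D\nn_j$ for all $i\neq j$, i.e.
\begin{align*}
  \rho_r\bigpar{\xi\nn_1,\dots,\xi\nn_r;\ctn}=\bigpar{(D\nn_i+D\nn_j)\indic{i\neq j}}_{i,j=1}^r .
\end{align*}
Therefore $\tau_r(\ctn)$ differs, in the Prohorov metric, by at most $\op(1)$ from the push-forward of $\mu_n^{\otimes r}$ under the fixed continuous map $(a_i)_i\mapsto\bigpar{(a_i+a_j)\indic{i\neq j}}_{i,j=1}^r$; since $\mu_n\pto\nu$ weakly, this push-forward converges in probability to $\tau_r(\OO_\nu)$, and so $\tau_r(\ctn)\pto\tau_r(\OO_\nu)$ for every $r$, which is \eqref{tgw2}.

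I expect the main obstacle to be the structural input of the first paragraph: converting the condensation hypothesis \eqref{gwIIc} into control of the subtrees hanging from $\vvv$ that is quantitative enough to support the law of large numbers for their generation sizes in (a) and the bound $\E\max_\ell|T_\ell|=o(n)$ in (b), and to guarantee $|R_n|=\op(n)$. Once this is granted via \cite{SJ264,Kortchemski,Stufler}, the rest is routine: manipulation of empirical measures together with the elementary identity $d_n(x,y)=d_n(x,\vvv)+d_n(\vvv,y)$, valid whenever the path in $\ctn$ from $x$ to $y$ passes through $\vvv$.
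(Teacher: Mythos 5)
Your overall skeleton is the same as the paper's: decompose distances through the condensation vertex $\vvv$, show that the distance-to-$\vvv$ distribution of a uniform vertex converges (conditionally on $\ctn$) to the geometric law $(1-\kk)\kk^{k-1}$, show that two independent uniform vertices \whp{} separate at $\vvv$, and then push forward via $(a_i)_i\mapsto((a_i+a_j)\indic{i\neq j})_{i,j}$. The endgame (your combination step, the no-escape-of-mass remark, the reduction to finite $r$) is fine. The problem is the structural input of your first paragraph, which you yourself identify as the main obstacle: the assertion that the subtrees $T_1,\dots,T_\DDD$ hanging off $\vvv$ are, in a quenched sense strong enough to support your law of large numbers in (a) and your bound $\max_\ell|T_\ell|=\op(n)$ in (b), asymptotically an \iid{} sample from the unconditioned \GWt{} law, together with $|R_n|=\op(n)$, is not available off the shelf at the generality of \eqref{gwIIa}--\eqref{gwIIc}. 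Kortchemski's results require regularly varying tails, not just \eqref{gwIIc}; and Stufler's extended-fringe limit is an \emph{annealed} statement (tree and vertex random together), whereas your step (a) is a concentration statement conditional on $\ctn$ -- the paper's closing remark in this section makes exactly this point and explains why the annealed result cannot simply be cited.

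This missing structural statement is precisely what the paper's proof supplies. It starts from the quenched fringe-tree theorem $N_\ttt(\ctn)/n\pto\pi_\ttt$ (valid under \eqref{gwIIa}--\eqref{gwIIb}), upgrades it to total-variation form, refines it to count fringe trees together with the parent's outdegree (\refL{LIIa}), and then uses \eqref{gwIIc} to isolate the children of $\vvv$: $N_{\ttt,\DDD}(\ctn)/n\pto(1-\kk)\pi_\ttt$ (\refL{LIIb}), which is the precise quenched version of your ``asymptotically \iid{} subtrees'' claim. The level profile around $\vvv$ (\refL{LIIc}) and the separation estimate (\refL{LIId}) are then derived by truncation arguments that pass from finitely many tree shapes $\ttt$ to all shapes using total-mass identities ($\sum_\ell W_\ell\le n$ and $\E|\cT|=1/(1-\kk)$); this replaces both your LLN over the $T_\ell$ and your claim $\E\max_\ell|T_\ell|=o(n)$, neither of which can be justified by ``standard arguments for \iid{} variables'' until the quenched \iid-type structure has actually been established. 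So your proposal is not wrong in outline, but as written it externalizes the entire substantive content of the proof to citations that do not cover the stated hypotheses; to close the gap you would essentially have to reproduce \refLs{LIIa}--\ref{LIId}.
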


\begin{remark}
  Note that there is no rescaling of $\ctn$ in \eqref{tgw2};
the situation is similar to \refEs{ESn} and \ref{Esuper}.
Distances are typically small; formally, the distance $d(\xi\nn_1,\xi\nn_2)$
between two random vertices is stochastically bounded (\ie, tight).
Hence, the local limits studied in \cite{SJ264} and \cite{Stufler} are
essentially global in this case.
\end{remark}

\begin{remark}
  The diameter  $\diam(\ctn)\pto\infty$, \eg{} by \refL{LIIc} below.
Hence, rescaling such that the diameter becomes 1 would only give the trivial
limit $\OO_0$, see \refR{ROO0}.
\end{remark}

The rest of this section contains the proof of \refT{TGW2}.
  We begin with some further notation.
In this proof, all trees are rooted and ordered.
Trees that are equal up to order-preserving isomorphisms are regarded as equal.
Let $\TT$ be the countable set of all finite trees.

Let again $\cT$ denote the (unconditioned) \GWt{} with the chosen
offspring distribution $\zeta$. Since $\E\zeta<1$, $\cT$ is \as{} finite.
If $\ttt$ is any fixed finite tree, let
\begin{align}
  \pi_\ttt:=\P(\cT=\ttt).
\end{align}
In other words, $(\pi_\ttt)_{\ttt\in\TT}$ is the probability distribution of 
$\cT\in\TT$.

The \emph{fringe tree} \cite{Aldous-fringe}
of a tree $T$ at a vertex $v$, denoted $T^v$, 
is the subtree of $T$ consisting of $v$ and its descendants, 
rooted at $v$.

Let $\ttt$ denote a finite tree.
For any tree $T$, let 
\begin{align}
  N_{\ttt}(T):=\bigabs{\set{v\in T: T^v= \ttt}},
\end{align}
\ie,  the number of fringe trees of $T$
equal to $\ttt$.

It is shown in \cite[Theorem 7.12]{SJ264} that for any fixed tree $\ttt$,
assuming \eqref{gwIIa}--\eqref{gwIIb},
\begin{align}\label{nypa}
  \frac{N_\ttt(\ctn)}{n}\pto \pi_\ttt.
\end{align}
In other words, the conditional distribution of $\ctn^v$ given $\ctn$,
with $v$ a random vertex, converges in probability to the distribution of $\cT$.
 Both sides of \eqref{nypa} 
are probability distributions on the countable set of
finite trees, and we claim that  it follows 
that 
the random distribution $\bigpar{N_\ttt(\ctn)/{n}}_\ttt$ converges in
probability to $(\pi_\ttt)_\ttt$ in total variation, and thus
for any set $\TTx\subseteq\TT$ of finite trees,
\begin{align}\label{nyttt}
\sum_{\ttt\in\TTx}  \frac{N_\ttt(\ctn)}{n}\pto \sum_{\ttt\in\TTx}\pi_\ttt
=\P(\cT\in\TTx).
\end{align}
To see this, note
the corresponding result for sequences of probability distributions on a
countable set is well known, see \eg{} \cite[Theorem 5.6.4]{Gut}.
The
version used here with random (conditional) distributions and  convergence
in probability follows by essentially the same proof, or by first using the 
Skorohod coupling theorem \cite[Theorem~4.30]{Kallenberg}, to see that we
may assume that \eqref{nypa} holds \as, and then using the deterministic
version. 

We need an extension of \eqref{nypa}. Let
\begin{align}\label{nypak}
  N_{\ttt,k}(T):=\bigabs{\set{v\in T: T^v= \ttt \text{ and } \ddd(\hv)=k}},
\end{align}
where $\hv$ denotes the parent of $v$ (undefined for the root).
Also, let
\begin{align}
  p_k:=\P(\zeta=k), \qquad k\ge0
\end{align}
and note that
\begin{align}\label{kk}
  \Ezeta:=\E\zeta=\sumk kp_k.
\end{align}

\begin{lemma}
  \label{LIIa}
Assume \eqref{gwIIa}--\eqref{gwIIb}.
For every fixed $\ttt\in\TT$ and $k\in\bbN$,
\begin{align}\label{nypon}
  \frac{N_{\ttt,k}(\ctn)}{n}\pto k p_k \pi_\ttt.
\end{align}
\end{lemma}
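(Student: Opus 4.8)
My plan is to prove \eqref{nypon} by counting the vertices of $\ctn$ that contribute to $N_{\ttt,k}$ according to their parents, which reduces the statement to the fringe-tree convergence \eqref{nypa}--\eqref{nyttt} already established. The combinatorial input is a deterministic identity. For a finite tree $\mathbf{s}\in\TT$ whose root has outdegree $k$, let $m(\mathbf{s})$ be the number of children $c$ of the root of $\mathbf{s}$ for which the fringe tree $\mathbf{s}^c$ equals $\ttt$, and for every other $\mathbf{s}$ set $m(\mathbf{s}):=0$, so that $0\le m(\mathbf{s})\le k$ for all $\mathbf{s}\in\TT$. A vertex $v$ counted in $N_{\ttt,k}(T)$ has a parent $w:=\hv$ with $\ddd(w)=k$, and (since $T^v=(T^w)^v$ for any child $v$ of $w$) is one of the children of the root of $T^w$ whose fringe tree equals $\ttt$; conversely, the root of $T$ is never counted, since \eqref{nypak} requires $\hv$ to exist. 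Hence the number of vertices counted in $N_{\ttt,k}(T)$ with parent $w$ is exactly $m(T^w)$, and summing over $w$ and regrouping by the isomorphism type $\mathbf{s}=T^w$ gives, for every finite tree $T$,
\begin{align}
  N_{\ttt,k}(T)=\sum_{w\in T}m(T^w)=\sum_{\mathbf{s}\in\TT}m(\mathbf{s})\,N_\mathbf{s}(T).
\end{align}
The first step of the proof is just to record and justify this identity with care.

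The second step is to take $T=\ctn$, divide by $n$, and let \ntoo. The key point is that $m$ is a \emph{bounded} function on the countable set $\TT$ (bounded by $k$), so the infinite sum is harmless: $\bigabs{N_{\ttt,k}(\ctn)/n-\sum_{\mathbf{s}\in\TT}m(\mathbf{s})\pi_\mathbf{s}}$ is at most $k$ times the $\ell^1$-distance between the two probability distributions $\bigpar{N_\mathbf{s}(\ctn)/n}_{\mathbf{s}\in\TT}$ and $(\pi_\mathbf{s})_{\mathbf{s}\in\TT}$ on $\TT$, and the latter tends to $0$ in probability by the total-variation convergence noted around \eqref{nyttt}. (If one prefers not to mention total variation, one can instead split the sum over a large finite set of trees, handle that finite part by \eqref{nypa}, and control the tail using $\sum_{\mathbf{s}\in\TT}\pi_\mathbf{s}=1$ together with \eqref{nyttt}; this reproves the same elementary fact.) Thus $N_{\ttt,k}(\ctn)/n\pto\sum_{\mathbf{s}\in\TT}m(\mathbf{s})\pi_\mathbf{s}$.

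The third step is to identify this constant. Encoding a tree $\mathbf{s}$ with root outdegree $k$ by the ordered $k$-tuple $(\ttt_1,\dots,\ttt_k)\in\TT^k$ of its principal subtrees, the branching property of $\cT$ gives $\pi_\mathbf{s}=p_k\prod_{i=1}^k\pi_{\ttt_i}$, while $m(\mathbf{s})=\sum_{i=1}^k\indic{\ttt_i=\ttt}$; a one-line computation, using $\sum_{\mathbf{s}\in\TT}\pi_\mathbf{s}=\P(\cT\text{ is finite})=1$ (valid since $\zeta$ is subcritical by \eqref{gwIIa}), then yields $\sum_{\mathbf{s}\in\TT}m(\mathbf{s})\pi_\mathbf{s}=k\,p_k\,\pi_\ttt$, which is \eqref{nypon}.

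The one genuinely delicate point is the interchange of the limit and the infinite sum in the second step, which is legitimate precisely because $m(\mathbf{s})\le k$ uniformly. This uniform bound is also the reason the lemma must be stated for each fixed $k$: since $\sum_{k\ge1}N_{\ttt,k}(\ctn)=N_\ttt(\ctn)-\indic{\ctn=\ttt}$, summing the limits \eqref{nypon} over $k$ would formally give $\sum_{k\ge1}k\,p_k\,\pi_\ttt=\kk\,\pi_\ttt<\pi_\ttt$, whereas $N_\ttt(\ctn)/n\pto\pi_\ttt$ by \eqref{nypa}; the missing mass $(1-\kk)\pi_\ttt$ escapes to the growing outdegree $\DDD(\ctn)\approx(1-\kk)n$ of the condensation vertex, so limit and sum over $k$ may not be exchanged. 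Finally, note that this argument uses only \eqref{gwIIa}--\eqref{gwIIb}, through \eqref{nypa}--\eqref{nyttt}, and not the complete-condensation hypothesis \eqref{gwIIc}.
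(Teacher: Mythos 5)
Your proof is correct and follows essentially the same route as the paper: the paper writes $N_{\ttt,k}(\ctn)=\sum_{j=1}^k N_{\ttt,k,j}(\ctn)$ and applies \eqref{nyttt} to the sets $\TT_j$ of admissible parent fringe trees, which is exactly your identity $N_{\ttt,k}(T)=\sum_{\mathbf{s}\in\TT}m(\mathbf{s})N_{\mathbf{s}}(T)$ with the sum over the child index carried out separately, using the same inputs \eqref{nypa}--\eqref{nyttt} and the branching property of $\cT$ to identify the constant $kp_k\pi_\ttt$. Your bounded-weight/total-variation step is thus a harmless repackaging of the paper's direct application of \eqref{nyttt}, and your closing remarks (why $k$ must be fixed, and that \eqref{gwIIc} is not needed) are consistent with the paper.
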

\begin{proof}
Let, for $j=1,\dots,k$,
\begin{align}\label{nypj}
  N_{\ttt,k,j}(T):=
\bigabs{\set{v\in T: T^v= \ttt, \ddd(\hv)=k,\text{ and $v$ is the $j$th
  child of $\hv$} }},
\end{align}
Note that $v$ is in the set in \eqref{nypj} if and only if
$T^{\hv}\in\TT_j$, where $\TT_j$ is the set of all trees $\httt$ such that
the root has exactly $k$ children, and if $w$ is the $j$th of these, then
the fringe tree  $\httt^w=\ttt$. 
Hence, \eqref{nyttt} shows that, 
using also the recursive property of the \GWt{} $\cT$,
\begin{align}\label{nyttto}
 \frac{N_{\ttt,k,j}(\ctn)}{n}
\pto \P(\cT\in\TT_j)
= p_k\P(\cT=\ttt)=p_k\pi_\ttt.
\end{align}
The result follows, since 
$N_{\ttt,k}(\ctn)=\sum_{j=1}^kN_{\ttt,k,j}(\ctn)$.
\end{proof}

We have so far not used the assumption \eqref{gwIIc}, but it is essential
for the next lemma. 
Recall that $\gD=\gD(\ctn)$ is the maximum outdegree,
and that \whp{} $\vvv$ is the only vertex of outdegree $\DDD$.
Hence, \whp, $N_\DDD=1$ and
$N_{\ttt,\DDD}(\ctn)$ is the number of children $v$ of $\vvv$
such that $\ctn^v=\ttt$.
\begin{lemma}
  \label{LIIb}
Assume \eqref{gwIIa}--\eqref{gwIIc}.
For every fixed $\ttt\in\TT$,
\begin{align}\label{liib}
  \frac{N_{\ttt,\DDD}(\ctn)}{n}\pto (1-\Ezeta) \pi_\ttt.
\end{align}
\end{lemma}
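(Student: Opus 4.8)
The plan is to split the fringe-subtree count $N_\ttt(\ctn)$ according to the outdegree $\ddd(\hv)$ of the parent, isolating the contribution of the condensation vertex $\vvv$, and to show that what is ``missing'' from the parents of bounded outdegree all piles up at $\vvv$. First I would observe that for $n>|\ttt|$ every vertex $v$ with $\ctn^v=\ttt$ has a parent (the root is excluded, since $\ctn\neq\ttt$), that \whp{} $\vvv$ is the unique vertex of outdegree $\DDD$ (as noted in the text, this follows from \eqref{gwIIc}), and that $\DDD>K$ \whp{} for any fixed $K$ (since $\DDD=(1-\Ezeta)n+\op(n)\to\infty$). Hence, \whp,
\begin{align*}
  N_\ttt(\ctn)=\sum_{k=1}^{K}N_{\ttt,k}(\ctn)+N_{\ttt,\DDD}(\ctn)+B_K ,
\qquad
B_K:=\bigabs{\set{v:\ctn^v=\ttt,\ \hv\neq\vvv,\ \ddd(\hv)>K}},
\end{align*}
the three terms counting, respectively, parents of outdegree in $\set{1,\dots,K}$, the parent $\vvv$, and parents of ``intermediate'' outdegree. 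By \refL{LIIa} (a finite sum over $k$), $\frac1n\sum_{k=1}^{K}N_{\ttt,k}(\ctn)\pto\pi_\ttt\sum_{k=1}^{K}kp_k$, and by \eqref{nyttt}, $\frac1n N_\ttt(\ctn)\pto\pi_\ttt$, so, using \eqref{kk},
\begin{align*}
  \frac{N_\ttt(\ctn)}{n}-\sum_{k=1}^{K}\frac{N_{\ttt,k}(\ctn)}{n}\pto
  \pi_\ttt\Bigpar{1-\sum_{k=1}^{K}kp_k}=\pi_\ttt\Bigpar{(1-\Ezeta)+\sum_{k>K}kp_k} .
\end{align*}

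The crux is to show that $B_K/n$ is small for $K$ large, uniformly in $n$. I would bound $B_K\le\sum_{w\neq\vvv,\,\ddd(w)>K}\ddd(w)$ (each such parent $w$ has $\ddd(w)$ children) and use the elementary identity $\sum_{w\in\ctn}\ddd(w)=n-1$ to write, \whp{} (when $\DDD>K$),
\begin{align*}
  \sum_{w\neq\vvv,\,\ddd(w)>K}\ddd(w)
  =(n-1)-\DDD-\sum_{k=1}^{K}k\,\bigabs{\set{w\in\ctn:\ddd(w)=k}} .
\end{align*}
Here $\frac1n\bigabs{\set{w\in\ctn:\ddd(w)=k}}\pto p_k$; this follows from \eqref{nyttt} applied with the set of finite trees whose root has outdegree $k$, since the root of $\cT$ has $\zeta$ children and hence $\P(\ddd(\text{root of }\cT)=k)=p_k$. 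Combined with $\DDD/n\pto 1-\Ezeta$ from \eqref{gwIIc}, this gives $\frac1n\sum_{w\neq\vvv,\,\ddd(w)>K}\ddd(w)\pto 1-(1-\Ezeta)-\sum_{k=1}^{K}kp_k=\sum_{k>K}kp_k$, which tends to $0$ as $\ktoo$ because $\sum_k kp_k=\Ezeta<\infty$.

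Putting the pieces together, for each fixed $K$ we have \whp{} that $0\le B_K\le\sum_{w\neq\vvv,\,\ddd(w)>K}\ddd(w)$, so $N_{\ttt,\DDD}(\ctn)/n$ is squeezed (\whp) between $\frac{N_\ttt}{n}-\sum_{k\le K}\frac{N_{\ttt,k}}{n}-\frac1n\sum_{w\neq\vvv,\,\ddd(w)>K}\ddd(w)$ and $\frac{N_\ttt}{n}-\sum_{k\le K}\frac{N_{\ttt,k}}{n}$; by the displays above both of these converge in probability to values differing from $(1-\Ezeta)\pi_\ttt$ by at most $(1+\pi_\ttt)\sum_{k>K}kp_k$. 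Since $\sum_{k>K}kp_k\to0$ as $\ktoo$, the usual $\eps$-argument (fix $\eps$, choose $K$, then let $\ntoo$) yields $N_{\ttt,\DDD}(\ctn)/n\pto(1-\Ezeta)\pi_\ttt$, as claimed. I expect the control of $B_K$ to be the real obstacle: one must rule out that a positive proportion of the fringe-$\ttt$ vertices are attached to parents of large-but-non-maximal outdegree, and this is exactly where complete condensation \eqref{gwIIc} — through the uniqueness of $\vvv$ together with the edge-count identity — is indispensable; were the excess outdegree $(1-\Ezeta)n$ spread over several vertices, the limit would not be the single geometric dendron $\OO_\nu$.
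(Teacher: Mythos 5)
Your proof is correct and follows essentially the same route as the paper: decompose the fringe-$\ttt$ count by the parent's outdegree into the ranges $k\le K$, the condensation vertex $\vvv$, and the intermediate range, and control the intermediate contribution by the edge-count identity $\sum_w \ddd(w)=n-1$ together with $N_k/n\pto p_k$ and $\DDD/n\pto(1-\Ezeta)$, finishing with the same $\eps$--$K$ squeeze. No gaps to report.
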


\begin{proof}
  Let $N_k:=\bigabs{\set{v\in\ctn:\ddd(v)=k}}$. Then, as a consequence of
  \eqref{nyttt} or as a simpler version of \eqref{nypa}, see
\cite[Theorem 7.11]{SJ264},
\begin{align}\label{kya}
  N_k/n\pto p_k,
\qquad k\ge0.
\end{align}

Let $\eps>0$, and choose $K$ such that 
\begin{align}
  \label{keps}
\sum_{k>K}kp_k<\eps.
\end{align}
The number of vertices having a parent of outdegree $k$ is $kN_k$.
Thus $\sum_k kN_k=n-1$. Hence,
using \eqref{kya}, \eqref{gwIIc}, \eqref{kk}  and \eqref{keps},
and assuming as we may that $N_\DDD=1$,
\begin{align}\label{nyq}
  \sum_{K+1}^{\DDD-1} kN_k&
=\sum_{k=1}^\infty kN_k -  \sum_{k=1}^K kN_k -\DDD
\notag\\&
=n-1- \sum_{k=1}^K kp_kn - \xpar{1-\Ezeta} n + \op(n)
\notag\\&
= \sum_{k>K} kp_kn + \op(n)
\notag\\&
<\eps n+\op(n)
<2\eps n
\qquad\text{\whp}
\end{align}

Now consider the $N_\ttt$ vertices $v$ such that $\ctn^v=\ttt$.
Assume for convenience $n>|\ttt|$, so that the root is not one of these
vertices. Then, using \eqref{nypa} and \eqref{nypon},
\begin{align}\label{nyr}
  N_{\ttt,\DDD}&
=N_\ttt-\sum_{k=1}^K N_{\ttt,k} - \sum_{K+1}^{\DDD-1} N_{\ttt,k}
\notag\\&
=
\pi_\ttt n -\sum_{k=1}^K kp_k\pi_\ttt n - \sum_{K+1}^{\DDD-1} N_{\ttt,k} + \op(n).
\end{align}
Thus, using \eqref{nyq} and $N_{\ttt,k}\le k N_k$, \whp,
\begin{align}
  \pi_\ttt n -\sum_{k=1}^K kp_k\pi_\ttt n - 2\eps n + \op(n)
\le N_{\ttt,\DDD}
\le \pi_\ttt n -\sum_{k=1}^K kp_k\pi_\ttt n  + \op(n).
\end{align}
Using also \eqref{keps} and \eqref{kk}, we find that \whp
\begin{align}
(1-\Ezeta)\pi_\ttt n -3\eps n
\le{N_{\ttt,\DDD}}
\le (1-\Ezeta)\pi_\ttt n +2\eps n.
\end{align}
The result \eqref{liib} follows, since $\eps>0$ is arbitrary.
\end{proof}

Next, for a tree $\ttt$, and $\ell\ge0$, let $w_\ell$ be the number of
vertices at distance $\ell$ from the root.
Furthermore, for $\ell\ge1$,
let $W_\ell:=w_\ell(\ctn^{\vvv})$, the number of vertices in $\ctn$ that are
descendants of $\vvv$ and are $\ell$ generations from it,
and let $\xW:=n-\sum_{\ell\ge1} W_\ell$ be the number of vertices that are not
descendants of $\vvv$.

\begin{lemma}
  \label{LIIc}
Assume \eqref{gwIIa}--\eqref{gwIIc}.
Then, 
for $\ell\ge1$,
\begin{align}\label{liic}
  \frac{W_\ell}{n} \pto \xpar{1-\Ezeta}\Ezeta^{\ell-1}
\end{align}
and
\begin{align}\label{liicx}
  \frac{\xW}{n} \pto0.
\end{align}

\end{lemma}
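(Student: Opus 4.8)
The plan is to reduce \eqref{liic} and \eqref{liicx} to \refL{LIIb} via an explicit formula for $W_\ell$ in terms of fringe-tree counts at $\vvv$, and then to pass to the limit by truncating the sum over trees, using the deterministic bound $\sum_{\ell\ge1}W_\ell\le n$ to control the tail.

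First I would observe that, on the \whp{} event that $\vvv$ is the unique vertex of maximum outdegree, $N_{\ttt,\DDD}(\ctn)$ is exactly the number of children $v$ of $\vvv$ with $\ctn^v=\ttt$. Since each descendant of $\vvv$ in generation $\ell\ge1$ lies in a unique such fringe tree $\ctn^v$, at distance $\ell-1$ from $v$ inside it, this gives, \whp,
\begin{align*}
 W_\ell=\sum_{\ttt\in\TT}N_{\ttt,\DDD}(\ctn)\,w_{\ell-1}(\ttt),\qquad \ell\ge1,
\end{align*}
where $w_m(\ttt)$ is the number of vertices of $\ttt$ at distance $m$ from its root (so $w_0\equiv1$, and for $\ell=1$ this recovers $W_1=\DDD$, consistent with \eqref{gwIIc}). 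I would also use the standard fact that the subcritical \GWt{} $\cT$ satisfies $\E w_m(\cT)=\Ezeta^m<\infty$, that is, $\sum_{\ttt\in\TT}\pi_\ttt w_m(\ttt)=\Ezeta^m$.

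For the lower bound, fix $\ell$ and $\eps>0$ and choose $K$ with $\sum_{|\ttt|\le K}\pi_\ttt w_{\ell-1}(\ttt)>\Ezeta^{\ell-1}-\eps$ (monotone convergence). Keeping only the finitely many terms with $|\ttt|\le K$ in the identity above and applying \refL{LIIb} to each of them gives $W_\ell/n\ge(1-\Ezeta)(\Ezeta^{\ell-1}-\eps)-\eps$ \whp; since $\eps$ is arbitrary, $\P\bigpar{W_\ell/n<(1-\Ezeta)\Ezeta^{\ell-1}-\eps}\to0$ for every $\eps>0$. For the upper bound and \eqref{liicx} I would invoke $\sum_{\ell\ge1}W_\ell=|\ctn^{\vvv}|-1\le n$: given $\eps>0$, pick $L\ge\ell$ with $\Ezeta^L<\eps/2$ and intersect the $L$ \whp{} lower-bound events $W_{\ell'}/n\ge(1-\Ezeta)\Ezeta^{\ell'-1}-\eps/(2L)$, $1\le\ell'\le L$. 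On this event $\sum_{\ell'=1}^{L}W_{\ell'}/n\ge1-\Ezeta^L-\eps/2$, which forces $\xW/n\le\Ezeta^L+\eps/2<\eps$, and, for the fixed $\ell$, $W_\ell/n\le1-\sum_{1\le\ell'\le L,\,\ell'\neq\ell}W_{\ell'}/n\le(1-\Ezeta)\Ezeta^{\ell-1}+\Ezeta^L+\eps/2<(1-\Ezeta)\Ezeta^{\ell-1}+\eps$. Combined with the lower bound this yields \eqref{liic}, and since $\xW\ge0$ it yields \eqref{liicx}.

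The only genuinely delicate point is the interchange of the limit in probability with the infinite sum over $\ttt$. The lower bound deals with this by truncating at $|\ttt|\le K$ and appealing to \refL{LIIb} for a finite family only; the upper bound and the estimate for $\xW$ never touch the tail of the sum directly, bounding it instead by the deterministic quantity $n-\sum_{\ell'\le L}W_{\ell'}$. Everything else is routine bookkeeping with geometric sums and a union bound over finitely many \whp{} events.
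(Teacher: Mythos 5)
Your proof is correct and follows essentially the same route as the paper: the identity $W_\ell=\sum_{\ttt}N_{\ttt,\DDD}(\ctn)w_{\ell-1}(\ttt)$ (on the \whp{} event that $\vvv$ is unique), lower bounds via \refL{LIIb} applied to finite families of trees together with $\sum_\ttt\pi_\ttt w_{\ell-1}(\ttt)=\Ezeta^{\ell-1}$, and then the deterministic bound $\sum_{\ell\ge1}W_\ell\le n$ to squeeze out the matching upper bound and \eqref{liicx}. Your explicit truncation at $L$ with $\Ezeta^L<\eps/2$ is just a slightly more hands-on rendering of the paper's step of summing the lower bounds over $j\neq\ell$ and subtracting from the trivial bound, so no substantive difference.
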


\begin{proof}
  We have, assuming $N_\DDD=1$ which holds \whp,
  \begin{align}\label{laa}
W_\ell = \sum_{\ttt\in\TT} N_{\ttt,\DDD}(\ctn)w_{\ell-1}(\ttt).
  \end{align}
For any finite family $\TT_0\subset\TT$,
by \refL{LIIb},
\begin{align}\label{lab}
\frac{1}{n}  \sum_{\ttt\in\TT_0} N_{\ttt,\DDD}(\ctn)w_{\ell-1}(\ttt)
\pto
\xpar{1-\Ezeta} \sum_{\ttt\in\TT_0} \pi_{\ttt}w_{\ell-1}(\ttt).
\end{align}
 Hence, by \eqref{laa},
\begin{align}\label{lac}
\frac{1}{n} W_\ell
=\frac{1}{n}  \sum_{\ttt\in\TT} N_{\ttt,\DDD}(\ctn)w_{\ell-1}(\ttt)
\ge
\xpar{1-\Ezeta} \sum_{\ttt\in\TT_0} \pi_{\ttt}w_{\ell-1}(\ttt)
+\op(1)
\end{align}
for any finite $\TT_0$.
Furthermore, by elementary branching process theory,
\begin{align}\label{laf}
   \sum_{\ttt\in\TT} \pi_{\ttt}w_{\ell-1}(\ttt)
=\E w_{\ell-1}(\cT)
=(\E\zeta)^{\ell-1}
=\Ezeta^{\ell-1}.
\end{align}
In particular, the sum converges, and it follows from \eqref{lac} that
\begin{align}\label{lad}
\frac{1}{n} W_\ell
\ge
\xpar{1-\Ezeta} \sum_{\ttt\in\TT} \pi_{\ttt}w_{\ell-1}(\ttt)
+\op(1).
\end{align}
Thus, \eqref{lad} yields
\begin{align}\label{lag}
\frac{1}{n} W_\ell
\ge
\xpar{1-\Ezeta}\Ezeta^{\ell-1}+\op(1),
\qquad \ell\ge1.
\end{align}
We can sum \eqref{lag} over any set of $\ell$, using the same 
argument as for \eqref{lad} again. In particular, we obtain
\begin{align}\label{laj}
\frac{1}{n}\sum_{j\neq \ell} W_j
\ge
\xpar{1-\Ezeta}\sum_{j\neq\ell}\Ezeta^{j-1}+\op(1)
.\end{align}
On the other hand, trivially,
\begin{align}\label{lai}
\frac{1}{n}\sumj W_j
\le 1=
\xpar{1-\Ezeta}\sumj\Ezeta^{j-1}.
\end{align}
Subtracting \eqref{laj} from \eqref{lai} yields
\begin{align}\label{lak}
\frac{1}{n} W_\ell
\le
\xpar{1-\Ezeta}\Ezeta^{\ell-1}+\op(1)
,\end{align}
which together with \eqref{lag} yields the result \eqref{liic}.

Furthermore, \eqref{lag} and \eqref{laj} yield
\begin{align}\label{lal}
\frac{1}{n}\sumj W_j
\ge
\xpar{1-\Ezeta}\sumj \Ezeta^{j-1}+\op(1)
=1+\op(1)
.\end{align}
Thus,
\begin{align}\label{lam}
\xW=
  n-\sumj W_j =\op(n),
\end{align}
which yields \eqref{liicx} and completes the proof.
\end{proof}

\begin{lemma}\label{LIId}
  Assume \eqref{gwIIa}--\eqref{gwIIc}.
Let $(\xi\nn_i)_i$ be \iid{} vertices in $\ctn$.
Then
\begin{align}\label{liid}
  \P\Bigpar{d(\xi\nn_1,\xi\nn_2)\neq
d(\xi\nn_1,\vvv)+d(\xi\nn_2,\vvv)\Bigm|\ctn}
\pto0.
\end{align}
\end{lemma}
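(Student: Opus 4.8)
The plan is to show that, conditionally on $\ctn$, two independent uniform vertices have, with probability tending to $1$, their connecting path passing through the giant vertex $\vvv$; once that is established, the tree identity $d(\xi\nn_1,\xi\nn_2)=d(\xi\nn_1,\vvv)+d(\xi\nn_2,\vvv)$ holds automatically, since in a tree a point lies on the geodesic between two others exactly when it realizes the additive distance identity. Throughout I would assume, as we may since it holds \whp, that $N_\DDD(\ctn)=1$; then $\vvv$ is the unique maximum-outdegree vertex, its children $v_1,\dots,v_\DDD$ are such that $N_{\ttt,\DDD}(\ctn)$ counts those $j$ with $\ctn^{v_j}=\ttt$, and $\sum_{j=1}^\DDD|\ctn^{v_j}|=|\ctn^\vvv|-1=n-\xW$ by the definition of $\xW$ in \refL{LIIc}.

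First I would record the purely combinatorial fact that if $x$ and $y$ are both descendants of $\vvv$ (i.e.\ lie in $\ctn^\vvv$) but not both in the same subtree $\ctn^{v_j}$, then $\vvv$ lies on the path from $x$ to $y$. Hence the ``bad'' event $\{d(\xi\nn_1,\xi\nn_2)\neq d(\xi\nn_1,\vvv)+d(\xi\nn_2,\vvv)\}$ is contained in $\bigcup_{j}\{\xi\nn_1,\xi\nn_2\in\ctn^{v_j}\}\cup\{\xi\nn_1\notin\ctn^{\vvv}\}\cup\{\xi\nn_2\notin\ctn^{\vvv}\}$, and by independence and uniformity of $\xi\nn_1,\xi\nn_2$ conditionally on $\ctn$,
\begin{align*}
\P\bigpar{d(\xi\nn_1,\xi\nn_2)\neq d(\xi\nn_1,\vvv)+d(\xi\nn_2,\vvv)\mid\ctn}
\le \sum_{j=1}^{\DDD}\Bigpar{\frac{|\ctn^{v_j}|}{n}}^2 + \frac{2\xW}{n}.
\end{align*}
By \refL{LIIc} the last term is $\op(1)$, so everything reduces to showing $\sum_{j}\bigpar{|\ctn^{v_j}|/n}^2\pto0$, i.e.\ that no single subtree hanging off $\vvv$ carries a positive fraction of the vertices (and a bit more than that).

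To prove this I would truncate subtree sizes at a large constant $K$. The terms with $|\ctn^{v_j}|\le K$ contribute at most $(K/n)\sum_j|\ctn^{v_j}|/n\le K/n\to0$. For the terms with $|\ctn^{v_j}|>K$ I would use the crude bound $(|\ctn^{v_j}|/n)^2\le|\ctn^{v_j}|/n$ and write the remaining sum as
\begin{align*}
\sum_{j:\,|\ctn^{v_j}|>K}\frac{|\ctn^{v_j}|}{n}
= \frac{n-\xW}{n} - \sum_{\ttt:\,|\ttt|\le K}\frac{|\ttt|\,N_{\ttt,\DDD}(\ctn)}{n}.
\end{align*}
By \refL{LIIc} the first term tends to $1$ in probability, and by \refL{LIIb}, applied termwise to the finitely many trees $\ttt$ with $|\ttt|\le K$, the second tends in probability to $S_K:=(1-\Ezeta)\sum_{|\ttt|\le K}|\ttt|\pi_\ttt$. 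Since $\sum_\ttt|\ttt|\pi_\ttt=\E|\cT|=\sum_{\ell\ge0}\Ezeta^{\ell}=1/(1-\Ezeta)$ by elementary branching process theory (as in \eqref{laf}), we have $S_K\upto1$ as $K\to\infty$, so $1-S_K$ can be made arbitrarily small. Thus, given $\eps>0$, I would first fix $K$ with $1-S_K<\eps$, conclude that the displayed quantity is $<\eps$ \whp, hence $\sum_j(|\ctn^{v_j}|/n)^2<2\eps$ \whp, and let $\eps\downto0$.

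I expect the truncation step to be the only real obstacle. One cannot pass to the limit directly in $\sum_j(|\ctn^{v_j}|/n)^2$ through something like a second moment $\E|\cT|^2$, because under \eqref{gwIIb} the offspring law — and hence the total progeny $|\cT|$ — is heavy-tailed and $\E|\cT|^2$ may be infinite; writing each square of a large term as the term itself and controlling the total mass via \refLs{LIIb}--\ref{LIIc} is exactly what avoids any second-moment hypothesis. Everything else — the combinatorial description of when $\vvv$ lies on a path, and the reduction using independence of $\xi\nn_1$ and $\xi\nn_2$ — is routine bookkeeping.
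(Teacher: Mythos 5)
Your proof is correct and follows essentially the same route as the paper's: the same decomposition of the bad event (a random vertex outside the progeny of $\vvv$, or both vertices in the same fringe subtree at a child of $\vvv$), the same bound $2\xW/n+\sum_j(|\ctn^{v_j}|/n)^2$, and the same truncation at size $K$ using \refL{LIIb}, \refL{LIIc} and $\E|\cT|=1/(1-\Ezeta)$ to control the tail without any second-moment assumption. The only cosmetic difference is that the paper bounds the tail sum by $n-\sum_{|\ttt|\le K}N_{\ttt,\DDD}|\ttt|$ directly, whereas you use the exact identity with $n-\xW$; both work identically.
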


\begin{proof}
If $\xi\nn_1$ and $\xi\nn_2$ both are descendants of $\vvv$, then   
$d(\xi\nn_1,\xi\nn_2)= d(\xi\nn_1,\vvv)+d(\xi\nn_2,\vvv)$
unless $\xi\nn_1$ and $\xi\nn_2$ are in the same fringe subtree rooted
at a child of $\vvv$. Hence, the probability in \eqref{liid} is at most
\begin{align}\label{lb}
  2\frac{\xW}{n}+\frac{1}{n^2}\sum_{\ttt\in\TT} N_{\ttt,\DDD}|\ttt|^2.
\end{align}
By \eqref{liicx}, it suffices to show that the sum in \eqref{lb} is
$\op(n^2)$.
Fix $K>1$, and let $\TTK:=\set{\ttt\in\TT:|\ttt|\le K}$
and
$\TTKK:=\set{\ttt\in\TT:|\ttt|> K}$.
First, deterministically,
\begin{align}\label{lba}
  \sum_{\ttt\in\TTK} N_{\ttt,\DDD}|\ttt|^2
\le K   \sum_{\ttt\in\TTK} N_{\ttt,\DDD}|\ttt|
\le Kn = o(n^2).
\end{align}
Secondly, since no subtree of $\ctn$ has more than $n$ vertices,
\begin{align}\label{lbb}
  \sum_{\ttt\in\TTKK} N_{\ttt,\DDD}|\ttt|^2
\le n  \sum_{\ttt\in\TTKK} N_{\ttt,\DDD}|\ttt|.
\end{align}
By \eqref{liib}, since the set $\TTK$ is finite,
\begin{align}\label{lbc}
    \sum_{\ttt\in\TTKK} N_{\ttt,\DDD}|\ttt|
\le  n -   \sum_{\ttt\in\TTK} N_{\ttt,\DDD}|\ttt|
= n - n   \sum_{\ttt\in\TTK} (1-\Ezeta)\pi_\ttt|\ttt|+\op(n).
\end{align}
On the other hand
\begin{align}\label{lbd}
  \sum_{\ttt\in\TT} (1-\Ezeta)\pi_\ttt|\ttt| = (1-\Ezeta)\E|\cT|
= (1-\Ezeta)\frac{1}{1-\Ezeta}=1.
\end{align}
Thus, for every $\eps>0$, we may choose $K$ such that
$ \sum_{\ttt\in\TTK} (1-\Ezeta)\pi_\ttt|\ttt|>1-\eps$,
and then \eqref{lbb}--\eqref{lbc} yield, \whp,
\begin{align}\label{top}
  \sum_{\ttt\in\TTKK} N_{\ttt,\DDD}|\ttt|^2 \le n^2\eps+\op(n^2) < 2\eps n^2.  
\end{align}
The result follows by \eqref{lb}, \eqref{lba} and \eqref{top}.
\end{proof}

\begin{proof}[Proof of \refT{TGW2}]
Let $(\xi\nn_i)$ be \iid{} uniformly random vertices of $\ctn$, and
let $Y\nn_i:=d(\xi\nn_i,\vvv)$.
Then \refL{LIIc} yields
\begin{align}\label{lca}
  \P\bigpar{Y\nn_i=\ell\mid\ctn}\pto (1-\Ezeta)\Ezeta^{\ell-1}, 
\qquad \ell\ge1,
\end{align}
and \refL{LIId} yields
\begin{align}\label{lcb}
  \P\bigpar{d(\xi\nn_i,\xi\nn_j)\neq Y\nn_i+Y\nn_j\mid\ctn}\pto0.
\end{align}
We may for convenience, by the
Skorohod coupling theorem \cite[Theorem~4.30]{Kallenberg}, 
or (more elementary) by considering suitable subsequences,
assume that \eqref{lca} and \eqref{lcb} hold with $\asto$.
Then, \eqref{lca} 
and the independence of $(Y\nn_i)_i$ shows that
\as{} the sequence $\ctn$ is
such that,
conditioned on $\ctn$, we have
$(Y\nn_i)_i\dto (Y_i)_i$ with $Y_i\sim\Ge (1-\Ezeta)$
i.i.d.
 Consequently, for every $r\ge1$, using also \eqref{lcb},
\begin{align}
  \rho_r\xpar{\xi\nn_1,\dots,\xi\nn_r;\ctn}
\dto \bigpar{(Y_i+Y_j)\indic{i\neq j}}_{i,j=1}^r
= \rho_r(\xi_1,\dots,\xi_r;\OO_\nu),
\end{align}
where $\xi_i:=(\bullet,Y_i)\in A_{\OO_\nu}$ are \iid{} with $\xi_i\sim\nu$. 
Hence, \as, $\tau_r(\ctn)\to \tau_r(\OO_\nu)$ and thus $\ctn\to\OO_\nu$.
\end{proof}

\begin{remark}\label{R2large}
  \cite[Example 19.37]{SJ264} gives an example of an offspring distribution
  satisfying \eqref{gwIIa}--\eqref{gwIIb} but not
\eqref{gwIIc}. In this example, there exists a subsequence of $n$ such that 
$\gD(\ctn)/n\pto0$; there exists also another subsequences for which $\ctn$
\whp{} contains two vertices of outdegree $n/3$.

It is an open problem to find tree limits in such cases.
In the case just mentioned with two large vertices (but not more), we
conjecture that the tree limit is similar to $\OO_\nu$ in \refE{EOO}, 
but has a base
consisting of a unit interval with the marginal distribution of $\nu$
concentrated on the two endpoints.
\end{remark}

\begin{remark}
  The proof above is based on the result \eqref{nypa} for random fringe
  trees $\ctn^v$ of $\ctn$. However, we also consider the parent of the
  random node $v$, see \eg{} \eqref{nypak}; thus we really consider
  properties of (part of) the \emph{extended fringe tree}, also defined by
  \citet{Aldous-fringe}. The asymptotic distribution of the entire extended
  fringe tree was found by \citet{Stufler}. However, his result is for the
  annealed version, where the tree $\ctn$ and the vertex $v$ are chosen
  at random together, while we here need the quenched version, where we
fix (\ie,   condition on) $\ctn$ and then take a random vertex $v$.
We have therefore used the (quenched) result \eqref{nypa} rather than the
result of \cite{Stufler}.
In fact, the argument above is easily extended to show that for 
the part of the extended
fringe tree up to the first very large ancestor (\ie, \whp, $\vvv$),
the infinite limit tree found by \citet{Stufler} is also the limit in the
quenched sense. However, this does not hold for the remaining part of the
extended fringe tree; this part is, for $n-\op(n)$ choices of $v$, equal to 
the part of $\ctn$ between the root and $\vvv$, and conditioned on $\ctn$
it is thus \whp{} equal to some random tree determined by $\ctn$.
Consequently, there is no quenched limit of the entire extended fringe tree.
\end{remark}

\section{Simply generated trees, type III}\label{SIII}
As said in \refR{Rsgt}, many simply generated trees are covered by the
results for \cGWt{s} in the preceding sections. However, there are also
simply generated trees of a different type (called type III in
\cite{SJ264}), where there is no equivalent \cGWt.
These are defined by  weight sequences $(w_k)_k$ such that the power series
$\sum_k w_k z^k$ 
has radius of convergence 0, \ie,
\begin{align}
  \label{III}
\sumko w_k z^k=\infty,\qquad z>0.
\end{align}
As shown in \cite{SJ264}, such simply generated trees have many
similarities with \cGWt{s} satisfying \eqref{gwIIa}--\eqref{gwIIb}, 
if we define $\kk:=0$.
In particular, there exists one or several vertices of high outdegree, with
total outdegree $n-\op(n)$. Again, \cf{} \eqref{gwIIc},
we regard as typical the case of complete
condensation now defined by
\begin{align}\label{IIIc}
  \DDD(\ctn)=n-\op(n),
\end{align}
so that there is a single vertex $\vvv$ that has fathered almost all others.
(In fact, then \whp{} $\vvv$ is the root, see \cite[(20.2)]{SJ264}.)
We then have an almost trivial result.
\begin{theorem}\label{TIII}
  Let $\ctn$ be a simply generated tree defined by a weight sequence
  $(w_k)\xoo$ satisfying \eqref{III} and \eqref{IIIc}.
Then
\begin{align}\label{tiii}
  \ctn\pto \OO_1.
\end{align}
\end{theorem}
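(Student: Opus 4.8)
The plan is to apply \refT{TOOX} with $c_n=1$ and $a=1$: by the equivalence of \ref{TOOXa} and \ref{TOOXb} there, the assertion $\ctn\pto\OO_1$ is the same as $d_n(\xi\nn_1,\xi\nn_2)\pto 2$, where $d_n$ is the graph distance in $\ctn$ and $\xi\nn_1,\xi\nn_2$ are independent uniformly random vertices of $\ctn$. (This is exactly the point of \refT{TOOX}: for a target of the form $\OO_a$ there is no difference between the quenched convergence we actually want and this plain statement about the distance between two random vertices.) Since $d_n$ is integer-valued, it then suffices to show $\P\bigpar{d_n(\xi\nn_1,\xi\nn_2)=2}\to1$.

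Recall that $\vvv=\vvv(\ctn)$ is a vertex of maximum outdegree $\DDD=\DDD(\ctn)$, so it has exactly $\DDD$ children. The elementary observation driving the proof is that any two \emph{distinct} children of $\vvv$ lie at distance exactly $2$ in $\ctn$, since the path joining them passes through their common parent $\vvv$; for this we need neither uniqueness of $\vvv$ nor the fact (true \whp{} by \eqref{IIIc}, see \cite[(20.2)]{SJ264}) that $\vvv$ is the root. Conditioned on $\ctn$, the probability that a given $\xi\nn_i$ is a child of $\vvv$ equals $\DDD/n$, while $\P\bigpar{\xi\nn_1=\xi\nn_2\mid\ctn}=1/n$; hence
\begin{align*}
  \P\bigpar{d_n(\xi\nn_1,\xi\nn_2)=2\mid\ctn}
  \ge\Bigpar{\frac{\DDD(\ctn)}{n}}^{2}-\frac1n .
\end{align*}
Taking expectations and using $0\le\DDD(\ctn)/n\le1$ together with $\DDD(\ctn)/n\pto1$, which is precisely \eqref{IIIc}, dominated convergence gives $\E\bigsqpar{\bigpar{\DDD(\ctn)/n}^{2}}\to1$, so $\P\bigpar{d_n(\xi\nn_1,\xi\nn_2)=2}\to1$. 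Thus $d_n(\xi\nn_1,\xi\nn_2)\pto 2$, and \refT{TOOX} yields $\ctn\pto\OO_1$, completing the proof.

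As in \refS{SGW2}, the hypothesis \eqref{III} plays no direct role in the computation: it merely places us in the type III setting in which complete condensation \eqref{IIIc} is the natural assumption (and under which \cite{SJ264} supplies \eqref{IIIc} under further conditions); \eqref{IIIc} alone is what is used. There is essentially no genuine obstacle once \refT{TOOX} is available; the only mild care needed concerns the two levels of randomness, and that has already been dispatched by \refT{TOOX}.
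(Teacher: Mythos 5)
Your proof is correct and follows essentially the same route as the paper: reduce via \refT{TOOX} (with $c_n=1$, $a=1$) to the annealed statement $d_n(\xi\nn_1,\xi\nn_2)\pto2$, and deduce this from \eqref{IIIc} by noting that \whp{} both random vertices are distinct children of $\vvv$ and hence at distance $2$. The only difference is that you spell out the conditional bound $(\DDD/n)^2-1/n$ and the bounded-convergence step, which the paper leaves implicit; your remarks that uniqueness of $\vvv$, its being the root, and \eqref{III} itself are not needed are also consistent with the paper's argument.
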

\begin{proof}
If $(\xi\nn_i)$ are \iid{} uniformly random vertices of $\ctn$, then 
\eqref{IIIc} shows that \whp{} $\xi\nn_1$ and $\xi\nn_2$ are children of the
node $\vvv$ with highest degree. 
Furthermore, \whp, $\xi\nn_1\neq\xi\nn_2$.
Hence, \whp{} $d(\xi\nn_1,\xi\nn_2)=2$, and the result follows by \refT{TOOX}.
\end{proof}

The proof shows that \eqref{tiii} holds because $\ctn$ ``almost'' is a star
$S_n$, see \refE{ESn}.

\begin{example}
  Considered the case $w_k=k!$, which satisfies \eqref{III}.
It was shown in \cite{SJ259} that then \eqref{IIIc} holds.
More precisely, 
if the fringe trees rooted at children of the root are called \emph{branches},
\whp{} 
$\ctn$ has a root of degree $n-1-Z_n$, where $Z_n\dto\Po(1)$, 
and of the $n-1-Z_n$ branches,
$Z_n$ have size 2 and all others are single vertices (\ie, leaves of
$\ctn$).

 The case $w_k=(k!)^\ga$ with $0<\ga<1$ is similar \cite{SJ259};
there are more branches
that have size $\ge2$, and the largest may have size $\ceil{1/\ga}+1$,
but their number is still $\op(n)$ and \eqref{IIIc} holds.
If  $w_k=(k!)^\ga$ with $\ga>1$, then $\ctn=S_n$ \whp.

Thus, \eqref{tiii} holds for any $\ga>0$.
\end{example}

\begin{remark}
\cite[Examples 19.18 and 19.39]{SJ264} give examples where \eqref{IIIc}
does not hold, and there are (at least for some subsequences)  several large
vertices. It is still true that \whp{} almost all vertices are at distance 1
from one of the large vertices, so possible subsequence limits (in
distribution) of $\ctn$ are determined by the structure of the subtree
spanned by the large vertices. We leave further study of this case as an
open problem.
\end{remark}

\section{Logarithmic trees}\label{Slog}

Many random trees $\ctn$ have heights that \whp{} are of order $\log n$;
we call such trees \emph{logarithmic trees}.
(Here, as usual, $n$ measures the size of the tree in some sense.
Note, however, that in some examples below, 
 $|\ctn|$ is random and not always
equal to $n$; nevertheless, it is always \whp{} of order $n$.)
Some examples are binary search trees, random recursive trees,
$m$-ary search trees, digital search trees, preferential attachment
trees and tries. 
Two general classes of such trees (overlapping, and together
including the examples just mentioned) are studied in \refSs{Ssplit} 
and~\ref{SCMJ}. 

In all these cases, it turns out that
the random trees $\ctn$ 
after rescaling 
have a non-random tree limit (in probability)
of the type $\OO_a$ in \refE{EOO}.
More precisely, for some $a\in(0,\infty)$,
\begin{align}\label{tlog}
  \frac{1}{\log n}\ctn \pto \OO_a.
\end{align}
We note that by \refT{TOOX}, \eqref{tlog} is equivalent to
\begin{align}\label{dlog}
  \frac{\dx(\xi\nn_1,\xi\nn_2)}{\log n}\pto 2a,
\end{align}
where, as usual, $\dx$ is the distance in $\ctn$ and $(\xi\nn_i)_i$ are
\iid{} vertices in $\ctn$. 
Equivalently, from our
point of view, \ie{} with regard to distances between random points, 
these classes of logarithmic trees behave
just like the deterministic binary tree in \refE{EBn}.
(We do not know any natural example of logarithmic random trees
that do not satisfy \eqref{tlog}--\eqref{dlog}.)

\begin{remark}
Note that \refT{TOOX} shows that in this case, with convergence to a
constant,
the annealed result \eqref{dlog} is sufficient. 
To prove \eqref{tlog} for some random trees $\ctn$,
we therefore may work with
annealed results and do not have to show quenched versions (which often
are more difficult).
\end{remark}

Before considering particular classes of random trees, we note the following
simple result, which is used to prove \eqref{tlog} in many cases.

\begin{theorem}\label{Tlog}
  Let $\ctn$ be random trees such that, as \ntoo,
for some $a\in\ooo$,
\begin{align}\label{olof}
  \frac{\dx(\xi\nn_1,o)}{\log n}&\pto a
\end{align}
and
\begin{align}
  \frac{\dx(\xi\nn_1\bmin\xi\nn_2,o)}{\log n}&\pto0\label{twin}
,\end{align}
where $(\xi\nn_i)_i$ are \iid{} random vertices in $\ctn$.
Then \eqref{tlog} and \eqref{dlog} hold.
\end{theorem}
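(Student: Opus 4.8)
The plan is to reduce the statement to the elementary distance identity in rooted trees together with \refT{TOOX}. First I would recall that for any two vertices $x,y$ of a rooted tree with root $o$ one has $\dx(x,y)=\dx(x,o)+\dx(y,o)-2\dx(x\bmin y,o)$, where $x\bmin y$ denotes the last common ancestor. Taking $x=\xi\nn_1$, $y=\xi\nn_2$ and dividing by $\log n$ gives
\begin{align*}
  \frac{\dx(\xi\nn_1,\xi\nn_2)}{\log n}
  =\frac{\dx(\xi\nn_1,o)}{\log n}+\frac{\dx(\xi\nn_2,o)}{\log n}
   -2\,\frac{\dx(\xi\nn_1\bmin\xi\nn_2,o)}{\log n}.
\end{align*}

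Second, I would observe that the $\xi\nn_i$ are (conditionally on $\ctn$, hence unconditionally) identically distributed, so \eqref{olof} applied to $\xi\nn_1$ also yields $\dx(\xi\nn_2,o)/\log n\pto a$. Combining this with \eqref{olof} for $\xi\nn_1$ and with \eqref{twin}, all three terms on the right-hand side above converge in probability, and since convergence in probability is preserved by finite sums, the left-hand side converges in probability to $a+a-2\cdot0=2a$; this is exactly \eqref{dlog}. Finally, I would invoke \refT{TOOX} with $c_n=1/\log n$, whose equivalence \ref{TOOXa}$\iff$\ref{TOOXb} says precisely that \eqref{dlog} is the same as \eqref{tlog}, which completes the argument.

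I do not expect any genuine obstacle here; the only point deserving a word of care is that $\xi\nn_1$ and $\xi\nn_2$ are merely conditionally independent given $\ctn$, but the argument uses only that they are identically distributed and that convergence in probability respects addition, so this causes no trouble. Indeed, this is exactly the reason why, as noted before \refT{Tlog}, one may work at the annealed level here rather than prove a quenched statement.
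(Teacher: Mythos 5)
Your proposal is correct and follows essentially the same route as the paper: the identity $\dx(v,w)=\dx(v,o)+\dx(w,o)-2\dx(v\bmin w,o)$ together with \eqref{olof} and \eqref{twin} gives \eqref{dlog}, and then \refT{TOOX} (equivalence of \ref{TOOXa} and \ref{TOOXb}) converts \eqref{dlog} into \eqref{tlog}. Your remark that only the identical distribution of $\xi\nn_1,\xi\nn_2$ and closure of convergence in probability under finite sums are needed is exactly the point, and matches the paper's observation that the annealed statement suffices here.
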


\begin{proof}
We have  
$d(v,w)=d(v,o)+d(w,o)-2d(v\bmin w,o)$ for any $v,w\in\ctn$;
thus \eqref{dlog} follows from \eqref{olof} and \eqref{twin}.
(Cf.\ \refE{EBn}, where the same argument was used.)
\end{proof}

The estimate \eqref{twin} is usually easy, for example by arguments sch as
in \refL{LSB} below, so the main task is to prove \eqref{olof}; this has
been done for many logarithmic random trees.

 The distance $d(\xi\nn_1,\xi\nn_2)$ between two random vertices
has previously been studied in a number of papers for various random trees.
These results verify \eqref{dlog} and thus \eqref{tlog} for several 
random trees; we give some examples.
(The references below show stronger results, which we ignore here.)

\begin{example}\label{EBST}
\emph{Binary search trees} were studied by
\citet{MahmoudN} 
who showed (in particular)
\eqref{dlog} with $a=2$. 
(See also \citet{PP2004}.)
Hence,
\begin{align}
\frac{1}{\log n}\ctn \pto \OO_2.  
\end{align}
This also follows by any of \refT{Tsplit}, \ref{Tsplit2} or \refT{TCMJ} 
below.
\end{example}

\begin{example}\label{ERRT}
\emph{Random recursive trees} were studied by \citet{Pan2004},
who showed (in particular)
\eqref{dlog} with $a=1$. 
Hence,
\begin{align}
\frac{1}{\log n}\ctn \pto \OO_1.  
\end{align}
This also follows by \refT{TCMJ} below.
\end{example}

\begin{example}\label{EPA}
\emph{Heap ordered trees}
(also called 
\emph{plane-oriented recursive trees} and
\emph{preferential attachment trees})
were studied by \citet{MorrisPP}
who showed (in particular) 
\eqref{dlog} with $a=1/2$. 
Hence,
\begin{align}\label{epa}
\frac{1}{\log n}\ctn \pto \OO_{1/2}.  
\end{align}
This also follows by \refT{TCMJ}  below.
\end{example}

\begin{example}\label{Ebinc}
\emph{Random $b$-ary recursive trees} 
(\emph{$b$-ary increasing trees})
were studied by 
\citet{MunsoniusR} who showed (in particular) \eqref{dlog} with $a=b/(b-1)$.
Hence,
\begin{align}
\frac{1}{\log n}\ctn \pto \OO_{b/(b-1)}.  
\end{align}
This also follows by \refT{TCMJ}  below,
or (using \cite[Theorem 6.1]{SJ320}) by \refTs{Tsplit} and \ref{Tsplit2}.
(The binary search tree in \refE{EBST} is the case $b=2$.)
\end{example}

\begin{example}\label{EPAX}
  More generally, for a preferential attachment tree where, in each round, 
a node with outdegree $k$
gets a child with probability proportional to $\chi k+\rho$, 
it follows from \refT{TCMJ} and \cite[Example 6.4]{SJ306} that
\begin{align}\label{epax}
\frac{1}{\log n}\ctn \pto \OO_{\rho/(\chi+\rho)}.    
\end{align}
\refEs{EBST}--\ref{Ebinc} are the cases with $(\chi,\rho)=$
$(-1,2)$, $(0,1)$, $(1,1)$, $(-1,b)$, respectively.
\end{example}

We end with an example that, as far as we know,   
does not follow from the general results in \refSs{Ssplit} and \ref{SCMJ}.
\begin{example}
\emph{Simple families of increasing trees}
(\emph{simply generated increasing trees}) were studied
by \citet{PP2004b}, who showed that if the generating function is a
polynomial of degree $d\ge2$, then \eqref{dlog} holds with $a=d/(d-1)$.
Hence,
\begin{align}
\frac{1}{\log n}\ctn \pto \OO_{d/(d-1)}.  
\end{align}
\end{example}

\section{Split trees}\label{Ssplit}
 Random \emph{split trees} were introduced by \citet{Devroye} as a unified
model that includes many important families of random trees (of logarithmic
height), for example binary search trees, $m$-ary search trees, tries and
digital search trees.
\refT{Tsplit} below  shows that random split trees 
after rescaling
have a non-random tree limit of the type $\OO_a$ in \refE{EOO}.
Equivalently, 
by \refT{TOOX}, distances between random points satisfy \eqref{dlog}.

The definition of split trees involves several parameters $b$, $s$, $s_0$, $s_1$
and a \emph{split vector} $\cV=(V_1,\dots,V_b)$ which is a random vector with
$V_i\ge0$ and $\sumib V_i=1$, \ie, a random probability distribution on
\set{1,\dots,b}.
The split tree is defined as a subtree of the infinite $b$-ary tree  $\Tb$. 
The tree is constructed by  adding a sequence of $n$ \emph{balls} to the
tree, which initially is empty. Each ball  arrives at the root and then
moves recursively as follows; see \cite{Devroye} for further details.

Each vertex is equipped with its own copy $\cV\nnx v$ of the random split vector
$\cV$; these copies are independent.
Each vertex has maximum capacity $s\ge1$;
the first $s$ balls that arrives at a vertex stay there (temporarily),
but when the $(s+1)$th ball arrives at the vertex, 
some balls are sent to its children, leaving $s_0\in[0,s]$ balls
that remain in the vertex for ever. 
(The details of this step depend on $s_1$, see \cite{Devroye}.)  
Any further ball that comes to the vertex is immediately passed along to 
one of its children, with probability $V\nnx v_i$ for child $i$
and independently of all previous events.

The split tree $\ctn$ is defined as the set of all vertices that have been
visited by a ball; note that (if $s_0=0$) some vertices in $\ctn$ may be empty, 
but there is always at least one ball in some descendant of the vertex.

We exclude the trivial case $\max(V_1,\dots,V_b)=1$ \as, and then $\ctn$ is
finite \as.
(Usually one assumes the slightly stronger $V_i<1$ \as{} for every $i$
\cite{Devroye}.) 

It is important to note that $\ctn$ is defined with a fixed number $n$ of
balls, while the number of vertices $|\ctn|$ in general is random.
Nevertheless, it is easy to see that
\begin{align}\label{sp1}
  \E |\ctn| = O(n).
\end{align}
Furthermore, since each node stores at most $s$ balls, we have a
deterministic lower bound
\begin{align}\label{sp2}
  |\ctn|\ge n/s.
\end{align}
In fact, in most cases $\E|\ctn|/n$ converges to some constant, and,
moreover, $|\ctn|/n$ converges in probability to the same constant, see
\cite[Theorem 1.1]{Holmgren2012}.
However, this is not always the case; for some tries, $\E|\ctn|/n$ oscillates.

We define
\begin{align}\label{chi}
  \chi:=\sum_{i=1}^b\E\bigsqpar{V_i\log(1/V_i)},
\end{align}
and note that $0<\chi<\infty$.

\begin{theorem}\label{Tsplit}
  Let $\ctn$ be a random split tree with a split vector
  $\cV=(V_1,\dots,V_b)$
and let $\chi$ be given by \eqref{chi}.
Then,
\begin{align}\label{tsplit}
\frac{1}{\log n}  \ctn\pto \OO_{1/\chi}.
\end{align}
\end{theorem}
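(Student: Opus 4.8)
The plan is to deduce \refT{Tsplit} from \refT{Tlog}. Write $o$ for the root of $\ctn$ and let $(\xi\nn_i)_i$ be i.i.d.\ uniformly random vertices of $\ctn$. It then suffices to verify the two hypotheses of \refT{Tlog} with $a=1/\chi$, namely $d(\xi\nn_1,o)/\log n\pto1/\chi$ (this is \eqref{olof}) and $d(\xi\nn_1\bmin\xi\nn_2,o)/\log n\pto0$ (this is \eqref{twin}); \refT{Tlog} then delivers \eqref{dlog} and \eqref{tlog}, which is exactly \eqref{tsplit}. Equivalently, by \refT{TOOX} it would be enough to prove the annealed statement $d(\xi\nn_1,\xi\nn_2)/\log n\pto2/\chi$ directly, and \refE{EBST} and the examples following it record several special split trees for which this is already in the literature; e.g.\ the binary search tree is the case $b=2$, $s=s_0=s_1=1$, $\cV=(U,1-U)$, for which $\chi=2\E[U\log(1/U)]=\tfrac12$ and $1/\chi=2$, consistent with \refE{EBST}.

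The estimate \eqref{olof} is essentially the universal logarithmic depth law for split trees of \citet{Devroye}, and I would either invoke it or re-derive it; the mechanism producing the constant $1/\chi$ is the following. Condition on the independent split vectors $(\cV\nnx v)_v$. After passing the finitely many saturated vertices on its way, each of the $n$ balls moves from a vertex $v$ to its $i$th child with probability $V_i^{(v)}$; hence the number of balls that ever visit a vertex $v$ is, up to bounded corrections caused by the capacities $s,s_0,s_1$, distributed as $\Bin(n,\Pi_v)$, where $\Pi_v$ is the product of the split coordinates along the path from $o$ to $v$. Thus $v\in\ctn$ essentially iff this count is positive, so $\ctn$ agrees, up to lower-order terms, with $\{v:\Pi_v\gtrsim1/n\}$, i.e.\ the truncation at level $\log n$ of the branching random walk $v\mapsto-\log\Pi_v$. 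Along the descent of a single ball the increments of $-\log\Pi$ are i.i.d.\ with mean $\sum_{i=1}^b\E[V_i\log(1/V_i)]=\chi<\infty$ by \eqref{chi}, so by the law of large numbers the ball first exceeds level $\log n$ after $(1+\op(1))\log n/\chi$ steps; a standard first-moment (many-to-one) estimate then shows that an overwhelming fraction of the $\asymp n$ vertices of $\ctn$ (recall \eqref{sp1}--\eqref{sp2}) lie at depth $(1+o(1))\log n/\chi$, which gives \eqref{olof}.

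For \eqref{twin} I would prove the stronger fact that $d(\xi\nn_1\bmin\xi\nn_2,o)$ is tight. Conditionally on $\ctn$, $\P(d(\xi\nn_1\bmin\xi\nn_2,o)\ge k\mid\ctn)=\sum_{v:\,d(v,o)=k}(|\ctn^v|/|\ctn|)^2$, where $\ctn^v$ is the fringe subtree rooted at $v$; in particular the two vertices fall into the same subtree rooted at a child $v_i$ of $o$ with conditional probability $\sum_i(|\ctn^{v_i}|/|\ctn|)^2$. The expectation of this quantity is at most some $\theta<1$ not depending on $n$: for the bulk of $n$ the ratios $|\ctn^{v_i}|/|\ctn|$ are close to $V_i$, and $\E\sum_{i=1}^bV_i^2<1$ strictly since $\max_iV_i=1$ is excluded a.s.\ (so $\sum_iV_i^2<1$ with positive probability). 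Because each $\ctn^{v_i}$ is again distributed as a smaller split tree, the same bound applies recursively at every level, giving $\P(d(\xi\nn_1\bmin\xi\nn_2,o)\ge k)\le C\theta^{k}$ for all $n$; hence $d(\xi\nn_1\bmin\xi\nn_2,o)=\Op(1)=\op(\log n)$, which is \eqref{twin}.

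The main obstacle is \eqref{olof}: carefully translating the combinatorial structure of $\ctn$ (which vertices occur, and the ball counts in the subtrees) into the split-vector branching random walk, and handling the bounded corrections from $s,s_0,s_1$ together with the possible oscillation of $|\ctn|/n$. Once \eqref{olof} is established — equivalently, once \cite{Devroye} is invoked — the remaining steps, in particular \eqref{twin}, are routine.
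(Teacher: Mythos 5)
Your skeleton is the same as the paper's: reduce \eqref{tsplit} via \refT{TOOX} and \refT{Tlog} to the depth law \eqref{olof} and the ancestor estimate \eqref{twin}; the paper does exactly this (for a more general model) in \refT{Tsplit2}. But two of your steps have genuine problems. First, \eqref{olof} is not ``equivalent to invoking \citet{Devroye}'': Devroye's theorem is the depth law \eqref{luc} for a uniformly random \emph{ball}, while \eqref{olof} concerns a uniformly random \emph{vertex}. Since $|\ctn|$ is random and $|\ctn|/n$ need not converge (the paper recalls that $\E|\ctn|/n$ oscillates for some tries), passing from balls to vertices is a real step: the paper devotes \refL{Lsplit1} to it, combining the ball-depth law (\refL{LSball}) with the size estimates of \refL{LS1} (in particular \eqref{ls1c} and \eqref{ls1b}) to control separately the too-shallow and too-deep vertices. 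Your ``standard first-moment'' sentence is precisely where this work lies; you do flag \eqref{olof} as the main obstacle, so here the issue is an underestimate rather than a wrong idea.

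The more serious gap is in \eqref{twin}, which you call routine. The claim that $|\ctn^{v_i}|/|\ctn|$ is close to $V_i$ is false in general, for the same reason as above: $|\cT_m|/m$ need not converge to a constant, so vertex-count ratios need not track the split vector. Moreover your level-by-level recursion is not justified: the factor $(|\ctn^{v_i}|/|\ctn|)^2$ and the deeper ratio sum inside $\ctn^{v_i}$ are functions of the same subtree, so a per-level bound $\theta<1$ cannot simply be multiplied across levels, and a bound uniform in the number of balls of the subtree would itself need proof. The paper's \refL{LSB} avoids both difficulties by working with ball counts: $\hN_v/n\pto\VV_v$, the product of the split entries along the path to $v$ (see \eqref{no1}, from \cite{Broutin+++}), whose factors at different levels are independent by construction; vertex counts enter only through the one-sided bounds $|\ctn^v|\le C\hN_v+\op(n)$ and $|\ctn|\ge cn$, applied once at a fixed level $K$. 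This gives $\P\bigpar{d(\xi\nn_1\bmin\xi\nn_2,o)\ge K}\le C\bigpar{\sum_{i=1}^b\E V_i^2}^K+o(1)$, where the constant $C$ appears once rather than raised to the power $K$; that already yields tightness of $d(\xi\nn_1\bmin\xi\nn_2,o)$ and hence \eqref{twin}, with no recursion needed. If you repair \eqref{twin} along these lines (or via the tightness results cited around \eqref{teta}) and either quote Holmgren's vertex-depth result for \eqref{olof} or carry out the ball-to-vertex transfer, your argument becomes essentially the paper's proof.
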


\begin{proof}
As said in \refS{Slog},  by \refT{TOOX}, \eqref{tsplit} is equivalent to
  \begin{align}\label{ton}
    \frac{\dx(\xi\nn_1,\xi\nn_2)}{\log n} \pto \frac{2}{\chi}.
  \end{align}
where $(\xi\nn_i)_i$ are \iid{} vertices in $\ctn$. 
Under a technical condition, \eqref{ton} was proved by 
\citet[Corollary 1]{arxiv1902}, as a corollary to some stronger estimates.
(Actually, their $\dx$ is slightly different, and includes the distance to
the root, but the same proof yields \eqref{ton}.)

For completeness, we give a proof (by similar methods)
in the following subsection, not requiring any further conditions; in fact,
we consider there an even more general model.
\end{proof}

Without going into details,
we note that the proof of \eqref{ton}
in \cite{arxiv1902}, as well as our similar proof in \refSS{SSsplit2},
is based on showing the two results
\eqref{olof} and \eqref{twin},
and that \eqref{olof} was shown by \citet{Holmgren2012}.

Note also the related fact that 
if $\eta\nn$ is a random \emph{ball} in $\ctn$, then
\begin{align}\label{luc}
\frac{d(\eta\nn,o)}{\log n}\pto\frac{1}{\chi}.
\end{align}
This was proved by \citet[Theorem 2]{Devroye},
see also the stronger results by
\citet[Theorem 1.3]{Holmgren2012} 
(under a weak technical assumption)  
and \citet[Lemma 13(ii)]{arxiv1902}
(Actually, \citet{Devroye} considered the depth of the last added ball, 
and not a random one, but
that easily implies the result \eqref{luc} by 
\citet[Proposition 1.1]{Holmgren2012}, arguing as in 
\citet[proof of Corollary 1.1]{Holmgren2012}.)

Furthermore, \citet{Ryvkina} showed (in particular)
the corresponding fact for the distance between two random balls:
\begin{align}\label{teta}
    \frac{\dx(\eta\nn_1,\eta\nn_2)}{\log n} \pto \frac{2}{\chi}.
\end{align}
See also \citet[Lemma 3.4]{AlbertH++}, showing that
$\dx(\eta\nn_1\bmin\eta\nn_2,o)$ is tight, which together with \eqref{luc}
implies \eqref{teta}, 

\begin{remark}\label{Rballs}
  In analogy with \refT{TOOX}, we can interpret \eqref{teta} as convergence
\begin{align}\label{balllog}
  \frac{1}{\log n}\bigpar{\ctn,\mu^*_n} \pto \OO_{1/\chi},
\end{align}
where we equip $\ctn$ with the probability measure 
$\mu^*_n$ 
defined as the distribution of the balls on $\ctn$.
\end{remark}

\subsection{Generalized split trees}\label{SSsplit2}

We define random \emph{generalized split trees} as follows;
this is a minor variation of the model in \citet{Broutin+++}.
%
Let $2\le b<\infty$ 
be a fixed  branching factor 
and suppose that for every integer $n\ge1$ we have a 
random vector $\cN\nn=(N\nn_i)_{i=0}^b$ with $N\nn_i\in\bbNo$
and
\begin{align}\label{nna}
  \sumiob N\nn_i=n.
\end{align}
Consider the infinite $b$-ary tree $\Tb$. 
For a given number $n$ of balls, all starting at the root, distribute the
balls according to $\cN\nn$, with $N\nn_0$ balls remaining in the root 
(for ever),
and $N\nn_i$ balls passed to the $i$th child.
Continue recursively in each subtree that has received at least one ball, 
using an independent copy of $\cN\nnx m$ at each vertex that has received $m$
balls. 

It is convenient to begin by equipping each vertex $v$ in the infinite
tree $\Tb$ with a private copy $\cN\nnx{n,v}$ of $\cN\nn$
for each $n\ge1$, with all these random vectors $\cN\nnx{n,v}$ independent.
Then, at each vertex $v$ that receives $m\ge1$ balls, we apply
$\cN\nnx{m,v}$. 

The tree $\ctn$ is defined as the set of all vertices that have received at
least one ball (whether or not any ball remains there).
Equivalently, $\ctn$ is the set of all
vertices $v\in\Tb$ such that the fringe tree $\Tb^v$ contains at least one ball.
Note again that the size $|\ctn|$ is random.

We assume the following:
\begin{PXenumerate}{ST}
  
\item \label{ST1}
There exists a constant $C_0$ such that for every $n$, \as,
  \begin{align}\label{st1}
0\le N\nn_0\le C_0.
  \end{align}

\item\label{ST2} 
The random vector $n\qw\cN\nn$ converges in distribution as \ntoo:
  \begin{align}\label{st2}
\frac{1}{n}\cN\nn = \Bigpar{\frac{N\nn_i}n}_{i=0}^b 
\dto \cV=\bigpar{V_i}_{i=0}^b.
  \end{align}

\item \label{ST3}
For every $n\ge1$,
\begin{align}\label{st3a}
  \P \bigpar{\max_{1\le i\le b} N\nn_i = n} < 1
\end{align}
and, similarly,
\begin{align}\label{st3b}
  \P \bigpar{\max_{1\le i\le b} V_i = 1} < 1.
\end{align}

\end{PXenumerate}
We call the limit $\cV$ in \eqref{st2} the (asymptotic) \emph{split vector}.
Note that $V_0=0$ by \ref{ST1} and \ref{ST2}; thus it suffices to consider
$(V_i)_1^b$. Furthermore, \eqref{st2} implies
\begin{align}\label{sumv}
  \sumib V_i = 1 \qquad \text{\as}
\end{align}
Thus, $\cV$ is a random probability distribution on \set{1,\dots,b}.

It should be clear that the definition above includes the split trees
defined by \citet{Devroye} and discussed above.
(In particular, our model includes tries, unlike the version in
\cite{Broutin+++}.) 

\begin{remark}\label{RST3}
  \eqref{st3a} only excludes the trivial case when, for some $n$, 
\as{} all $n$ balls
  are passed to the same child, and therefore, by induction, 
continue along some infinite path so that $\ctn$ becomes infinite.

Conversely, it is easy to see by induction that \eqref{st3a} implies that
$\ctn$ is finite \as{} for every $n\ge1$. 

Moreover, \eqref{st3b} implies  uniformity in \eqref{st3a}: it is easy to
see that \eqref{st3a}--\eqref{st3b} is equivalent to
the existence of $c,\gd>0$ such that, for every $n\ge1$,
\begin{align}\label{st3c}
  \P \bigpar{\max_{1\le i\le b} N\nn_i >(1-\gd)n} \le 1-c.
\end{align}
\end{remark}

\begin{theorem}\label{Tsplit2}
  Let $\ctn$ be a random generalized split tree with a split vector
  $\cV=(V_1,\dots,V_b)$
and let $\chi$ be given by \eqref{chi}.
Then, 
  \begin{align}\label{tonx}
    \frac{\dx(\xi\nn_1,\xi\nn_2)}{\log n} \pto \frac{2}{\chi},
  \end{align}
where $\dx$ is the distance in $\ctn$ and $(\xi\nn_i)_i$ are
\iid{} vertices in $\ctn$. Consequently,
\begin{align}\label{tsplit2}
\frac{1}{\log n}  \ctn\pto \OO_{1/\chi}.
\end{align}
\end{theorem}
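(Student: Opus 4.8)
The plan is to deduce \eqref{tsplit2} from \eqref{tonx} by \refT{TOOX} (applied with $c_n=1/\log n$ and $a=1/\chi$; recall $0<\chi<\infty$), so that the whole content is \eqref{tonx}. As \eqref{tonx} is just \eqref{dlog} with $a=1/\chi$, I would obtain it from \refT{Tlog}: it then suffices to prove the two estimates \eqref{olof} and \eqref{twin} for the root $o$ of $\ctn$, that is, $\dx(\xi\nn_1,o)/\log n\pto1/\chi$ and $\dx(\xi\nn_1\bmin\xi\nn_2,o)/\log n\pto0$, where $(\xi\nn_i)_i$ are \iid{} uniform vertices of $\ctn$. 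Note that, by \refT{TOOX} again, only these \emph{annealed} statements are needed, so we never have to condition on $\ctn$. For ordinary split trees, \eqref{olof} is essentially due to \citet{Holmgren2012} and \eqref{tonx} (under an additional technical assumption) to \citet{arxiv1902}; the task is to rerun these arguments assuming only \ref{ST1}--\ref{ST3}.

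For \eqref{olof} I would first analyse a uniformly random \emph{ball}: following its path from the root, let $Z_k$ be the number of balls in the subtree at depth $k$, so $Z_0=n$. At a vertex holding $m\gg1$ balls the split uses an independent copy of $\cN\nnx{m}$, so the ball moves to child $i$ with probability $\approx N\nnx{m}_i/m\to V_i$; hence $\log Z_{k+1}-\log Z_k\approx\log V_I$ with $\P(I=i\mid\cV)=V_i$, and $\E\bigsqpar{\log Z_{k+1}-\log Z_k}\approx\E\bigsqpar{\sum_iV_i\log V_i}=-\chi$. A renewal/law-of-large-numbers argument then places the ball's final (settling) depth at $(1/\chi+o(1))\log n$ — this is the Devroye--Holmgren picture behind \eqref{luc}, to be rerun for the generalized model using \ref{ST2} and \eqref{st3c} (the latter giving uniform non-degeneracy of the splits). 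To pass from a random ball to a uniform random vertex one invokes \ref{ST1}: every ball eventually settles and each vertex retains at most $C_0$ balls, so $|\ctn|\ge n/C_0$ deterministically; hence, writing $J_n:=[(1/\chi-\eps)\log n,(1/\chi+\eps)\log n]$, the probability that $\dx(\xi\nn_1,o)\notin J_n$ is at most $C_0/n$ times the expected number of vertices of $\ctn$ at depth outside $J_n$, and the crux is that this expectation is $o(n)$. The ``too deep'' part follows from a Cram\'er-type large-deviation bound for the path sum $\sum_j\log(1/V_{I_j})$ (which has mean $\chi$) together with exponential tails for path lengths; the ``too shallow'' part is the statement that the profile of $\ctn$ concentrates near depth $(1/\chi)\log n$, which in this generality must also be proved by these methods.

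For \eqref{twin} the point is that two independent uniform vertices split near the root. Conditionally on $\ctn$, the event $\dx(\xi\nn_1\bmin\xi\nn_2,o)\ge\ell$ has probability $\sum_v\bigpar{|\ctn\cap\Tb^v|/|\ctn|}^2$, the sum over vertices $v$ at depth $\ell$; since these subtrees are disjoint, $\sum_v|\ctn\cap\Tb^v|\le|\ctn|$, so this probability is at most $\bigpar{\max_v|\ctn\cap\Tb^v|}/|\ctn|$. Taking $\ell=\eps\log n$, a union bound over the at most $b^{\ell}$ candidate vertices, together with \eqref{st3c} and a Chernoff estimate, shows that \whp{} every such subtree contains at most $n^{1-\gd}$ balls; since a generalized split tree on $k$ balls has at most $Ck$ vertices with exponential concentration (each maximal run of empty pass-through vertices has geometric length, again by \eqref{st3c}), a further union bound gives $\max_v|\ctn\cap\Tb^v|=o(n)$ \whp, while $|\ctn|\ge n/C_0$. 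Hence the conditional probability above is $o(1)$, so $\dx(\xi\nn_1\bmin\xi\nn_2,o)=\op(\log n)$, which is \eqref{twin}. Combining \eqref{olof} and \eqref{twin} via \refT{Tlog} yields \eqref{tonx}, and then \refT{TOOX} gives \eqref{tsplit2}.

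The main obstacle, I expect, is precisely the bookkeeping that relates numbers of balls to numbers of vertices \emph{uniformly over all vertices at a given depth}. In the classical Devroye model one can take $s_0\ge1$, forcing every internal vertex to keep a ball; here \ref{ST1} bounds $N\nn_0$ only from above, so $\ctn$ may contain empty pass-through chains that forward all their balls to a single child, and these have to be tamed before $|\ctn|$, the subtree sizes in the proof of \eqref{twin}, or the profile in the proof of \eqref{olof} are under control. This is exactly where the quantitative non-degeneracy \eqref{st3c} (equivalently \ref{ST3}) enters, providing geometric decay along such chains; carrying out the profile concentration needed for the ``too shallow'' direction of \eqref{olof} at this level of generality is the principal piece of genuinely new work.
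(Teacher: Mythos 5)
Your overall reduction is exactly the paper's: deduce \eqref{tsplit2} from \eqref{tonx} via \refT{TOOX}, and get \eqref{tonx} from \refT{Tlog} by proving \eqref{olof} and \eqref{twin}, analysing a random ball first and then transferring to vertices via the deterministic bound $|\ctn|\ge n/C_0$. But two of the quantitative steps are genuinely problematic. First, your argument for \eqref{twin}: the claim that \whp{} every subtree rooted at depth $\ell=\eps\log n$ contains at most $n^{1-\gd}$ balls cannot be obtained from \eqref{st3c} by a per-path Chernoff estimate plus a union bound over the $b^{\ell}$ candidate vertices. Condition \eqref{st3c} only says that, along a fixed path, the ball count drops by a factor $1-\gd$ at a given level with probability at least $c$, so the per-path failure probability decays no faster than roughly $(1-c)^{\ell}$; since $c$ may be small, this need not beat the $b^{\ell}$ paths. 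What makes the statement true is a moment computation based on \eqref{st3b}: $\E\sum_i V_i^\theta\to\P(\max_iV_i=1)<1$ as $\theta\to\infty$, so $\E\sum_{|v|=\ell}\VV_v^{\theta}=(\E\sum_iV_i^{\theta})^{\ell}\to0$ for $\theta$ large, and one must additionally transfer this from the limit split vectors to the actual counts $\hN_v$ at depth growing with $n$, since \ref{ST2} carries no uniformity. The paper sidesteps all of this: in \refL{LSB} it works at a \emph{fixed} depth $K$, uses $\hN_v/n\pto\VV_v$ for the finitely many $v\in U_K$ and $|\ctn^v|\le C\hN_v+\op(n)$, bounds the conditional probability by $C\sum_{|v|=K}\VV_v^2+\op(1)$, and needs only $\E\sum_iV_i^2<1$; this even gives tightness of $d(\xi\nn_1\bmin\xi\nn_2,o)$, far more than \eqref{twin} requires.

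Second, for \eqref{olof} you correctly identify the ``too shallow'' direction (that the expected number of vertices at depth $\le(1/\chi-\eps)\log n$ is $o(n)$) as the crux, but you leave it unproved, calling it the principal piece of new work; so as it stands the proposal is incomplete there. The paper does prove it (\refL{Lsplit1}) by a soft argument: split the low-depth vertices into those with $\hN_v>B$, whose expected number is $\le Cn/B$ by \eqref{ls1c}, and those with $\hN_v\le B$; the latter lie in fringe trees rooted at ``green'' vertices (where the count first drops to $\le B$), each of which with probability $\ge\tfrac12$ has height $O(1)$ and hence contributes a ball of depth $\le L+O(1)$, so their expected number is $O\bigpar{\E|\ZZB_{\le L+O(1)}|}=o(n)$ by the ball-depth result --- no profile large deviations are needed. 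The ``too deep'' side is likewise reduced to balls via $\E\bigpar{|\ctn^v|\mid\hN_v}\le C\hN_v$ summed over depth-$L$ vertices (this also tames the empty pass-through chains you worry about), and the ball-depth lemma itself is proved not by a Cram\'er bound for $\sum_j\log(1/V_{I_j})$ but by an $L^1$-coupling of the finite-$n$ increments $-\log W\nnx{m}$ with $-\log W$, uniform for $m\ge B$ (\refL{LS0}), precisely because \ref{ST2} gives only convergence in distribution with no rate. So your skeleton is the right one, but the two estimates it rests on are, respectively, argued by a union bound that does not close and left as an acknowledged gap.
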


To prove \refT{Tsplit2}, we show a series of lemmas.
We define random variables $W\nn$ and $W$ as size-biased selections
from $N\nn/n$ and $\cV$. More precisely, conditionally on $N\nn$, we select
an index $I$ with distribution $\P(I=i\mid N\nn)=N_i\nn/n$, and then define
\begin{align}\label{W}
  W\nn:=
  \begin{cases}
    \xfrac{N_I\nn}{n},&I\ge1,\\
1,& I=0.
  \end{cases}
\end{align}
(The special definition in the case $I=0$, 
which has  probability $O(1/n)$ only, will
be convenient below.)
Similarly, conditionally on $\cV$ we select $I$ with $\P(I=i\mid\cV)=V_i$,
and then take
\begin{align}\label{W2}
  W:=V_I.
\end{align}
It follows from \ref{ST2} that
\begin{align}\label{W3}
  W\nn\dto W.
\end{align}
Note that 
\begin{align}\label{wlogn}
  \E \bigpar{-\log(W\nn)} 
=\E \sumib \frac{N\nn_i}{n}\Bigpar{-\log\frac{N\nn_i}{n}}
\end{align}
and, by \eqref{chi},
\begin{align}\label{wlog}
  \E \bigpar{-\log W} 
=\E \sumib V_i\bigpar{-\log V_i}
=\chi.
\end{align}

\begin{lemma}\label{LS0}
We may couple $-\log W\nn$ with a copy $\zeta\nn$ of $\zeta:=-\log W$ such
that
\begin{align}
  \E|\zeta\nn +\log W\nn|\to0
\qquad \text{as \ntoo}.
\end{align}
\end{lemma}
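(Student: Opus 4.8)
The plan is to deduce the lemma from the weak convergence $W\nn\dto W$ established in \eqref{W3}, combined with a uniform integrability bound for the family $\set{-\log W\nn}_n$, and then to invoke the Skorohod coupling theorem \cite[Theorem~4.30]{Kallenberg}. Note first that $W\nn\in(0,1]$ always (if $I\ge1$ then $N\nn_I\ge1$, and $W\nn=1$ if $I=0$), while $W\in(0,1]$ with $W>0$ \as; moreover $\E(-\log W)=\chi<\infty$ by \eqref{wlog}, so all the logarithms occurring below are finite \as{} and integrable.

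The crux of the argument — and essentially the only point requiring any thought — is a \emph{uniform} tail estimate for $W\nn$: I claim that $\P\bigpar{W\nn<t}\le bt$ for all $t>0$ and all $n\ge1$. For $t>1$ this is trivial; for $t\le1$, conditioning on $\cN\nn$ and using that the size-biased index $I$ satisfies $\P(I=i\mid\cN\nn)=N\nn_i/n$ while $W\nn=1>t$ when $I=0$, one gets
\begin{align}
  \P\bigpar{W\nn<t\mid\cN\nn}=\sumib\frac{N\nn_i}{n}\indic{N\nn_i/n<t}\le bt,
\end{align}
since there are at most $b$ nonzero summands and each is at most $t$; taking expectations gives the claim. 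From this, together with the elementary identity $\E\bigsqpar{Y\indic{Y>M}}=M\,\P(Y>M)+\int_M^\infty\P(Y>s)\dd s$ applied to $Y:=-\log W\nn\ge0$ and the bound $\P(Y>s)=\P(W\nn<e^{-s})\le b e^{-s}$, I would obtain
\begin{align}
  \E\bigsqpar{(-\log W\nn)\indic{-\log W\nn>M}}\le b(M+1)e^{-M},\qquad M\ge0,\ n\ge1,
\end{align}
which tends to $0$ as $M\to\infty$, uniformly in $n$. Hence $\set{-\log W\nn}_n$ is uniformly integrable (in particular $\sup_n\E(-\log W\nn)\le b<\infty$, consistent with \eqref{wlogn}).

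To finish, I would apply the Skorohod coupling theorem to $W\nn\dto W$: on a common probability space there are $\widehat W_n\eqd W\nn$ and $\widehat W\eqd W$ with $\widehat W_n\to\widehat W$ \as, and since $\widehat W_n,\widehat W\in(0,1]$ with $\widehat W>0$ \as, it follows that $-\log\widehat W_n\to-\log\widehat W$ \as. Taking $-\log\widehat W_n$ as the realization of $-\log W\nn$ and setting $\zeta\nn:=-\log\widehat W$ (a copy of $\zeta$), the uniform integrability just established upgrades this almost sure convergence to $L^1$ convergence, giving $\E\bigabs{\zeta\nn+\log W\nn}\to0$ as \ntoo, which is the assertion. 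The main obstacle is thus entirely concentrated in the uniform integrability step, i.e.\ in the tail bound above; everything else is routine and I foresee no further difficulty.
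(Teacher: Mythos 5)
Your proof is correct and follows essentially the same route as the paper's: obtain $-\log W\nn\to\zeta$ \as{} via the Skorohod coupling applied to \eqref{W3}, then upgrade to $L^1$ convergence by uniform integrability. The only difference is how UI is checked: the paper bounds the second moments uniformly, using that $x\log^2x$ is bounded on $[0,1]$ (cf.\ \eqref{wlogn2}), whereas you derive the uniform tail bound $\P(-\log W\nn>s)\le b e^{-s}$ from the size-biased construction; both verifications are valid and equally short.
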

\begin{proof}
  By \eqref{W3}, we have 
\begin{align}
\label{ask}    
\log W\nn \dto \log W=-\zeta.
\end{align}
By the Skorohod coupling theorem \cite[Theorem~4.30]{Kallenberg},
we may assume that \eqref{ask} holds \as, and thus
\begin{align}\label{embla}
\log W\nn + \zeta\asto0.
\end{align}
Furthermore,
\begin{align}\label{wlogn2}
  \E\bigsqpar{ \bigpar{\log W\nn}^2}
=\E \sumib \frac{N\nn_i}{n}\Bigpar{-\log\frac{N\nn_i}{n}}^2 \le C,
\end{align}
since $x\log^2 x$ is bounded on \oi, and similarly
$\E\zeta^2=\E[ \bigpar{\log W}^2] \le C$.
Hence the sequence 
$\E\xpar{\log W\nn+\zeta}^2$ is uniformly bounded, and thus the sequence
$\log W\nn+\zeta$ is uniformly integrable
\cite[Theorem 5.4.2]{Gut}.
Consequently, \eqref{embla} implies
$\E|\log W\nn+\zeta|\to0$
\cite[Theorem 5.5.2]{Gut}.
\end{proof}

Let $\hN_v$ be the number of balls received by vertex $v\in\Tb$.
Thus 
\begin{align}\label{ctn2}
\ctn=\set{v\in\Tb:\hN_v\ge1}.  
\end{align}

\begin{lemma}\label{LS1}
  \begin{thmenumerate}
  \item 
There exists a constant $ C$ such that, for all $n$,
\begin{align}\label{ls1a}
  \E |\ctn| \le C n
\end{align}
and,
 more generally, for any $K$,
\begin{align}\label{ls1c}
  \E\bigabs{\set{v\in\ctn:\hN_v\ge K}} \le C n/K.
\end{align}
Furthermore, 
\begin{align}\label{ls1whp}
  |\ctn| \le C n
\qquad\text{\whp}
\end{align}

\item  Deterministically, 
  \begin{align}\label{ls1b}
    |\ctn|\ge cn.
  \end{align}
\end{thmenumerate}
\end{lemma}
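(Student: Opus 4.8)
The plan has three parts: deduce the lower bound \eqref{ls1b} from conservation of balls; obtain the upper bounds \eqref{ls1c}--\eqref{ls1a} from the natural recursion for $\ctn$ together with the uniform non\-/degeneracy \eqref{st3c} of \refR{RST3}; and derive \eqref{ls1whp} from \eqref{ls1a} by a concentration estimate.

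For \eqref{ls1b} I would work on the (probability $1$, by \refR{RST3}) event that $\ctn$ is finite. Then every ball eventually comes to rest at some vertex, so that the number of balls retained over all vertices equals $n$; this telescoping identity is proved by a trivial induction over the finite tree. Since every vertex retains at most $C_0$ balls by \ref{ST1}, at least $n/C_0$ vertices retain at least one ball and hence lie in $\ctn$, giving $|\ctn|\ge n/C_0$. Thus \eqref{ls1b} holds with $c:=1/C_0$ (and on the negligible event that $\ctn$ is infinite there is nothing to prove). Note that \ref{ST1} and \eqref{st3a} force $C_0\ge1$.

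For \eqref{ls1c} (which contains \eqref{ls1a} as the case $K=1$) I would set $T_K(n):=\E\bigabs{\set{v\in\ctn:\hN_v\ge K}}$. Splitting $\ctn$ at the root into its at most $b$ subtrees, and using that conditionally on the number of balls it receives each such subtree is again a generalized split tree, one gets the recursion
\[ T_K(n)=\indic{n\ge K}+\E\sum_{i=1}^b T_K\bigpar{N\nn_i}, \]
with $T_K(0)=0$ and $T_K(m)=0$ for $1\le m<K$, and $T_K(n)<\infty$ for each fixed $n$ (itself part of the induction). The goal is to show $T_K(n)\le Cn/K$ for a constant $C$ depending on the model only, by strong induction on $n$. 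The mechanism: by \eqref{st3c}, with probability at least $c$ no child gets more than $(1-\gd)n$ balls, and on this event (for $n$ above a model\-/dependent threshold) the balls are genuinely split among at least two children, each strictly smaller than $n$, so the inductive hypothesis applies to them; on the complementary event one follows the heaviest child, using that consecutive ``essentially\-/all\-/to\-/one\-/child'' steps each have probability at most $1-c$ and so occur only a geometrically bounded number of times before a genuine split. The subtle point, and the main obstacle, is making the constants close: the naive linear ansatz leaves no room to absorb the ``$+\indic{n\ge K}$'' term, so one must carry a small additive correction in the induction hypothesis on the range above the threshold and check that the (geometrically controlled) non-branching chains do not spoil it.

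Finally, \eqref{ls1whp} would follow from \eqref{ls1a} together with a routine second\-/moment bound: the same recursion, again using \eqref{st3c} to beat the trivial $O(n^2)$, gives $\Var|\ctn|=o(n^2)$, whence $|\ctn|\le Cn$ \whp{} by Chebyshev's inequality; alternatively one may quote the known concentration of split\-/tree sizes \cite{Holmgren2012}.
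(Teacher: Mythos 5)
Your part (ii) is exactly the paper's argument and is fine. The problem is part (i): your induction scheme for \eqref{ls1a}/\eqref{ls1c} does not get off the ground as stated, and you yourself flag the decisive step as unresolved. Concretely, the recursion $T_K(n)=\indic{n\ge K}+\E\sum_i T_K(N\nn_i)$ is \emph{not} a strict recursion in $n$: with probability up to $1-c$ a single child receives all $n$ balls (condition \ref{ST3} only bounds this probability away from $1$), so $T_K(n)$ appears on both sides and "strong induction on $n$, applying the hypothesis to the children" is not available. Solving for $T_K(n)$ on the all-to-one-child event produces a factor $c^{-1}$ in front of the recursive term, which destroys any linear ansatz; fixing this forces you into exactly the chain/"heaviest child" bookkeeping you gesture at, and that is precisely the part you admit you have not closed ("the main obstacle"). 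Relatedly, the finiteness $T_K(n)<\infty$ is not delivered by the recursion "as part of the induction" for the same reason — a priori the expected size could be infinite even though the tree is a.s.\ finite, and proving finiteness already requires the geometric control of non-branching chains. The \whp{} statement \eqref{ls1whp} inherits the gap: the claimed bound $\Var|\ctn|=o(n^2)$ is asserted, not derived (and would meet the same recursion obstacle), while the fallback citation of \cite{Holmgren2012} does not apply here — that result concerns Devroye's split trees under extra conditions, and the paper explicitly notes it fails for some tries, which the generalized model of this subsection is designed to include.

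For comparison, the paper sidesteps the recursion entirely: fix $r:=1/(1-\gd)<2$ and count, for each scale $k$, the vertices with $\hN_v\in[r^k,r^{k+1})$. Stopping the construction at vertices receiving fewer than $r^{k+1}$ balls gives at most $n/r^k$ "red" starting points (their ball sets are disjoint), and since $r<2$ each red vertex has at most one child with $\hN\ge r^k$, while \eqref{st3c} kills that child with probability at least $c$; hence the number of scale-$k$ vertices below each red vertex is dominated by a geometric variable. Summing over $k$ gives \eqref{ls1a}, restricting to $r^{k+1}>K$ gives \eqref{ls1c}, and the same domination by independent geometrics (uniformly in $k$) yields \eqref{ls1whp} directly from the law of large numbers, with no variance computation needed. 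If you want to salvage your route, you would have to build this chain-length argument into the induction explicitly (e.g.\ by first proving the geometric bound on the number of consecutive heavy steps and inducting on the ball count \emph{after} a genuine split), at which point you have essentially reproduced the paper's proof in different clothing.
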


\begin{proof}
  First, \eqref{ls1b} follows immediately from the fact that by \ref{ST1},
  no vertex contains more than $C_0$ balls when the construction is finished;
  hence there are at least $n/C_0$ vertices containing balls.

For \eqref{ls1a}, recall \eqref{st3c}, and assume as we may that $\gd<1/2$.
Let $r:=1/(1-\gd)<2$, and let $X_k$ be the number of vertices $v$ such that
$\hN_v\in[r^k,r^{k+1})$.
For a given $k\ge0$, generate the tree as usual, but stop at every vertex
$v$ that 
receives $\hN_v<r^{k+1}$ balls, and colour these vertices pink. 
If a pink vertex $v$ has $\hN_v\ge r^k$, recolour it red.
Since the red vertices receive disjoint sets
of balls, the number $R_k$ of them is at most $n/r^{k}$.
Condition on the set of red vertices and the numbers of balls in them, and
continue the construction of the tree. Since $r<2$, each red vertex has at
most one child $w$ with $\hN_w\ge r^k$, and by \eqref{st3c}, with probability
at least $c$ it has none. Continuing, we see that for each red vertex, the
number of descendants $w$ with $\hN_w\ge r^k$ is dominated by a geometric
distribution, and thus the expected number of such descendants is $O(1)$.
Consequently, $  \E \bigpar{X_k\mid R_k}\le C R_k$, and thus
\begin{align}
  \E X_k \le C \E R_k \le C\frac{n}{r^{k}}.
\end{align}
This yields
\begin{align}
  \E |\ctn| = \E \sumko X_k 
\le\sumko C\frac{n}{r^{k}}
= C n,
\end{align}
which is \eqref{ls1a}.

We obtain \eqref{ls1c} in the same way, summing only over $k$ with $r^{k+1}>K$.

Finally, the argument above shows that $X_k$ is stochastically dominated by
a sum 
of $\floor{n/r^k}$ independent copies of a geometric random variable
$\zeta$.
Furthermore, we may choose these to be independent also for different $k$. (The
red vertices for different $k$ are not independent, but the stochastic upper
bound that we use holds also conditioned on events for larger $k$.)
Hence,
\begin{align}
  |\ctn|\le \sum_{i=1}^{m_n}\zeta_i,
\end{align}
where $\zeta_i\in\Ge(p)$ are \iid{} with some fixed $0<p<1$,
and $m_n:=\sumko \floor{n/r^k} \le Cn$. Hence, \eqref{ls1whp} follows by the
law of large numbers.
\end{proof}

\begin{lemma}\label{LSball}
  Let $\chi>0$ be given by \eqref{chi}.
If $D_n$ is the depth of a random ball in $\ctn$, then
\begin{align}
  \frac{D_n}{\log n}\pto \frac{1}{\chi}.
\end{align}
\end{lemma}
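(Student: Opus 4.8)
The plan is to follow a uniformly random ball down the tree and control how fast the number of balls it ``sees'' shrinks. Write $v_0=o,v_1,v_2,\dots$ for the vertices on the path of the chosen ball and put $Z_k:=\hN_{v_k}$, so that $Z_0=n$ and $(Z_k)$ is non-increasing. Conditioned on $Z_k=m$, the vertex $v_k$ uses a fresh copy of $\cN\nnx m$; our ball, being equally likely to be any of the $m$ balls present at $v_k$, stays there (ending its journey) with probability $N\nnx m_0/m$, and otherwise moves to child $i$ with probability $N\nnx m_i/m$, in which case $Z_{k+1}=N\nnx m_i$. Thus $(Z_k)$ is a Markov chain, $D_n$ is the step at which the ball first stays, and — with the convention of \eqref{W} — $-\log(Z_{k+1}/Z_k)$ has the law of $-\log W\nnx{Z_k}$. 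The leading behaviour should come from the fact that $\log Z_k$ drifts downward at rate $\chi$, so that $Z_k$ reaches $O(1)$ when $k\approx\log n/\chi$, after which the ball is absorbed within $O(1)$ further steps.

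First I would collect three ingredients. (i) By \ref{ST1}, conditionally on $Z_k=m$ the ball stays at step $k$ with probability $\E[N\nnx m_0/m]\le C_0/m$. (ii) By \eqref{st3c} there are $c,\gd>0$ with $\P\bigpar{Z_{k+1}\le(1-\gd)m\text{ or the ball stays}\mid Z_k=m}\ge c$ for all $m$ and $n$. (iii) Setting $g(m):=\E[-\log W\nnx m]$, the identities \eqref{wlogn}--\eqref{wlog} together with the uniform bound \eqref{wlogn2} (which supplies uniform integrability, exactly as in \refL{LS0}) give $g(m)\to\chi$ and $\sup_m\E\bigsqpar{(\log W\nnx m)^2}<\infty$; write $\delta(A):=\sup_{m\ge A}\abs{g(m)-\chi}\to0$. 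Then for $k\le D_n$ one has $S_k:=\sum_{j=0}^{k-1}\bigpar{-\log W\nnx{Z_j}}=\log n-\log Z_k$, and $M_k:=S_k-\sum_{j<k}g(Z_j)$ is a martingale with uniformly bounded increment variances, so Doob's $L^2$ inequality gives $\P\bigpar{\max_{k\le K}\abs{M_k}\ge t}\le CK/t^2$, which is $o(1)$ for $K=O(\log n)$ and $t=\eps\log n$.

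For the upper bound $D_n\le(1+\eps)\log n/\chi$ \whp{} I would fix $A$ with $\delta(A)<\eps\chi/4$ and set $k^+:=\lceil(1+\tfrac\eps2)\log n/\chi\rceil$. On the event that $Z_k>A$ and the ball has not stayed for every $k\le k^+$ we would get $\sum_{j<k^+}g(Z_j)\ge k^+(\chi-\delta(A))\ge(1+\tfrac\eps8)\log n$, which together with the martingale bound forces $S_{k^+}>\log n$, impossible since $Z_{k^+}\ge1$; hence \whp{} either the ball has already stayed by step $k^+$, or $Z_k\le A$ for some $k\le k^+$, in which case from that time $Z$ lives in $\{1,\dots,A\}$ and, by (ii), each step independently with probability $\ge c$ either absorbs the ball or lowers $Z$ by at least $1$ (and from $Z=1$ such a step forces absorption), so the ball stays within a sum of at most $A$ i.i.d.\ $\Ge(c)$ variables, i.e.\ within $o(\log n)$ further steps \whp. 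For the lower bound $D_n\ge(1-\eps)\log n/\chi$ \whp, put $k^-:=\lfloor(1-\eps)\log n/\chi\rfloor$ and let $\sigma$ be the first $k\le D_n$ with $Z_k\le n^{\eps/8}$. On $\{\sigma\le k^-\}$ we would have $S_\sigma=\log n-\log Z_\sigma\ge(1-\tfrac\eps8)\log n$, whereas $\sum_{j<\sigma}g(Z_j)\le k^-(\chi+\delta(A))\le(1-\tfrac\eps2)\log n$ (since $Z_j>n^{\eps/8}>A$ for $j<\sigma$), so the martingale bound makes $S_\sigma\le(1-\tfrac\eps4)\log n$ \whp, a contradiction; hence $\sigma>k^-$ \whp. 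Then by (i),
\begin{align*}
\P\bigpar{D_n\le k^-}
&\le\P\bigpar{\sigma\le k^-}+\sum_{k=0}^{k^-}\E\Bigsqpar{\frac{C_0}{Z_k}\indic{Z_k>n^{\eps/8}}}\\
&\le o(1)+(k^-+1)\,C_0\,n^{-\eps/8}\to0 .
\end{align*}
Combining the two bounds and letting $\eps\downarrow0$ gives $D_n/\log n\pto1/\chi$.

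The main obstacle is the inhomogeneity of the chain $(Z_k)$: its transition law depends on the current value and approaches the law built from the split vector $\cV$ only as $Z_k\to\infty$. This is what forces the two-regime analysis above — a ``large $Z_k$'' regime, where the $\chi$-drift of $\log Z_k$ (controlled via $g$, the second-moment bound \eqref{wlogn2}, and the martingale estimate) pins down the leading term $\log n/\chi$, and a ``bounded $Z_k$'' regime, where one needs only that absorption occurs within $O(1)$ further steps (from \eqref{st3c}). The same inhomogeneity also makes it necessary to exclude early absorption while $Z_k$ is still large, which is exactly the content of ingredient (i), $\P(\text{absorb}\mid Z_k=m)\le C_0/m$; ensuring that the error terms in both the martingale estimate and this absorption bound beat the $O(\log n)$ number of steps is the only genuinely delicate book-keeping.
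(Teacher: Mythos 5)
Your argument is correct, and it follows the same skeleton as the paper's proof (follow the random ball, show that $\log \hN_{v_k}$ drifts down at rate $\chi$ while the ball count is large, and treat separately the two boundary effects: absorption while the count is still large, and the final phase with a bounded count), but the technical implementation is genuinely different at each step. Where the paper couples the increments with an \iid{} sequence distributed as $-\log W$ via \refL{LS0} (Skorohod coupling plus uniform integrability), controls the coupling error in $L^1$ and invokes the law of large numbers together with Markov's inequality, you center by the exact conditional means $g(Z_j)=\E[-\log W\nnx{Z_j}]$ and use a Doob $L^2$ maximal inequality, which needs only $g(m)\to\chi$ and the uniform second-moment bound \eqref{wlogn2}. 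Where the paper excludes absorption at a heavy vertex via the expected number of vertices with many balls, \ie{} \eqref{ls1c} of \refL{LS1}, you use the pointwise bound $N\nnx m_0/m\le C_0/m$ from \ref{ST1} and a union bound over the $O(\log n)$ steps; and where the paper finishes the small-count phase by the \as{} finiteness (hence finite heights) of the trees $\cT_m$, $m\le B$, you dominate the remaining number of steps by a sum of at most $A$ $\Ge(c)$ variables using \eqref{st3c}. Your route is thus more self-contained for this lemma (it does not use \refL{LS1} or the coupling of \refL{LS0}) and somewhat more quantitative, while the paper's route reuses lemmas it needs anyway and avoids the martingale formalism. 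Two small points to tidy in a final write-up: the process $M_k=S_k-\sum_{j<k}g(Z_j)$ is a martingale only up to the absorption time (after absorption the increments are identically $0$, not centered at $g(Z_j)$), so you should phrase the maximal inequality for the stopped process $M_{k\wedge(D_n+1)}$ — all your events occur before absorption, so nothing changes; and the Markov-chain description of the ball's trajectory rests on exchangeability of the balls at each vertex, which is also used implicitly in the paper and is harmless, but deserves a sentence.
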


\begin{proof}
  Consider a random ball, and suppose that it follows a path 
$v_0=o,  v_1,\dots,v_D$, ending up at a vertex $v_D$ of depth $D=D_n$.
For completeness, define $v_j:=v_D$ for $j>D$.
Let, for $k\ge0$,
\begin{align}\label{Yk}
  Y_k:=\log \hN_{v_{k-1}} - \log \hN_{v_{k}}
=-\log \frac{\hN_{v_{k}} }{ \hN_{v_{k-1}}}\ge0,
\qquad k\ge0.
\end{align}
For $k\ge0$, 
let $\cF_k$ be the \gsf{} generated by $\cN\nnx{m,v}$ with $m\ge1$ and
$d(v,o)<k$, together with $v_j$ for $j\le k$.
Then $\hN_{v_k}$ and $Y_k$ are $\cF_k$-measurable, and so is the event 
$\set{D\ge k}=\set{v_k\neq v_{k-1}}$.
Conditioned on $\cF_k$, and assuming $\hN_{v_k}=m$ and $D\ge k$,
$Y_{k+1}$ has by the definitions \eqref{Yk} and \eqref{W} 
the same distribution as $-\log W\nnx m$.

Let $\gl>0$ be fixed and let $\eps>0$.
By \refL{LS0},
there exists $B=B_\eps$ 
such that if $m\ge B$, then we can couple $-\log W\nnx m$ with
$\zeta:=-\log W$ such that
$\E|\zeta+\log W\nnx m|<\eps$.

Let $L=\floor{\gl\log n}$
and
define the stopping time $\tau$ as the smallest $k$ such that one of the
following occurs.
\begin{alphenumerate}
  
\item\label{tau1}
 $k\ge L$,

\item\label{tau2}
 $\hN_{v_k}\le B$,

\item\label{tau3} 
$k>D$, and thus the ball has come to rest.
\end{alphenumerate}

By the comments just made, 
we can couple the sequence $(Y_k)_k$ with an \iid{} sequence
$(\zeta_k)\xoo$  with $\zeta_k\eqd -\log W$ such that
on the event $\set{\tau>k}\in\cF_k$,
\begin{align}\label{glada}
\E\bigpar{|Y_{k+1}-\zeta_{k+1}|\mid \cF_k}<\eps.
\end{align}
Let, recalling \eqref{Yk},
\begin{align}\label{falk}
  X:=\sum_{k=1}^\tau\bigpar{Y_k-\zeta_k}=
\log n-\log \hN_{v_\tau} -\sum_{k=1}^\tau\zeta_k
.\end{align}
Then, \eqref{glada} implies
\begin{align}\label{chil}
\E|X|
\le 
\sum_{k=1}^{L} \E\bigabs{\bigpar{Y_k-\zeta_k}\indic{k\le \tau}}
\le L\eps.
\end{align}

Let $\cEm$ be the event $\set{D<L-1}$, and let
$\cEm':=\cEm\cap\set{\hN_{v_\tau}>B}$
and
$\cEm'':=\cEm\cap\set{\hN_{v_\tau}\le B}$.
First, if $\cEm'$ occurs, then in the definition of $\tau$, neither
\ref{tau1} 
nor \ref{tau2} may occur. 
(If $\tau\ge L$, then already $\tau-1>D$, a contradiction.) 
Hence, \ref{tau3} occurs, and thus the ball has $v_\tau$ as its final position. 
Let $\cS:=\set{v\in \Tb:\hN_v>B}$. By the definition
of $\cEm'$, we have $v_\tau\in \cS$, and thus the ball ends up in the set
$\cS$.
By \ref{ST1}, there are at most $C_0|\cS|$ such balls, and thus the
conditional probability given $\ctn$ 
that our random ball is one of them is $\le C_0|\cS|/n$.
Hence, by \eqref{ls1c},
\begin{align}\label{erik}
  P(\cEm') \le \frac{\E(C_0|\cS|)}{n} \le \frac{\CC}{B}.
\end{align}
We may increase $B$ if necessary so that $B>\CCx/\eps$, and thus
$\P(\cEm')<\eps$.

On the other hand, if $\cEm''$ holds, then, by \eqref{falk}, 
\begin{align}\label{akela}
  X &\ge \log n -\log B -\sum_{k=1}^\tau\zeta_k
\ge \log n -\log B -\sum_{k=1}^L\zeta_k.
\end{align}
 Thus, by the law of large numbers, recalling that $\E\zeta_k=\chi$ by
 \eqref{wlog}, on the event $\cEm''$,  \whp{}
 \begin{align}\label{lom}
   X \ge \log n -\log B - L(\chi+\eps)
\ge \bigpar{1 -\gl(\chi+\eps)-\eps}\log n.
 \end{align}
If $\gl<1/\chi$, and $\eps$ is so small that $\gl(\chi+\eps)+\eps<1$, 
\eqref{lom}, \eqref{chil} and Markov's inequality yield
\begin{align}
  \P(\cEm'')\le \frac{L\eps}{\bigpar{1 -\gl(\chi+\eps)-\eps}\log n}+o(1)
\le \frac{\gl\eps}{{1 -\gl(\chi+\eps)-\eps}}+o(1).
\end{align}
Hence,
\begin{align}
  \P(D<L-1)=\P(\cEm)=\P(\cEm')+\P(\cEm'')
\le \eps + \eps \frac{\gl}{{1 -\gl(\chi+\eps)-\eps}}+o(1).
\end{align}
Letting $\eps\to0$, we see that $\P(D\le \gl \log n-2)\le \P(D<L-1)\to0$.
In other words, for any $\gl<1/\chi$, 
\begin{align}\label{bill}
D> \gl \log n -2  
\qquad\text{\whp}
\end{align}

For the other side, 
assume $\gl>1/\chi$, and 
let
$\cEp$ be the event $\set{D \ge L}$. Let
$\cEp':=\set{\tau=L}$
and
$\cEp'':=\cEp\cap\set{\tau<L}$.

The law of large numbers and \eqref{falk} imply that on the event $\cEp'$, \whp,
\begin{align}
  X \le \log n -\sum_{k=1}^L \zeta_k \le \log n -(\gl\chi-\eps)\log n
= - (\gl\chi-1-\eps)\log n
.\end{align}
Hence, if  
$\eps$ is small enough, \eqref{chil} and
Markov's inequality yield
\begin{align}\label{kork}
  \P(\cEp') \le \frac{L\eps}{(\gl\chi-1-\eps)\log n} + o(1)
\le \frac{\gl\eps}{\gl\chi-1-\eps}+o(1).
\end{align}

If $\cEp''$ holds, then \ref{tau1} and \ref{tau3} cannot hold, and thus
$\hN_{v_\tau}\le B$. Hence our chosen ball belongs to a subtree rooted at
$v_\tau$ with at most $B$ balls. Conditioned on $\hN_{v_\tau}=m$, this
subtree is a copy of $\cT_m$, and since the finitely many random trees
$\cT_m$, $1\le m\le B$, all are \as{}
finite and thus have finite (random) heights $H(\cT_m)$, there exists a
constant $\CCname\CCgran$ such that 
\begin{align}\label{gran}
  \P\bigpar{H(\cT_m)>\CCgran}\le \eps,\qquad m=1,\dots,B.
\end{align}
It follows that conditioned on $\cE''$, 
\begin{align}\label{ek}
  \E\bigpar{D> L+\CCx\mid\cEp''}
\le   \E\bigpar{D> \tau+\CCx\mid\cEp''}
\le \eps.
\end{align}
Finally, combining \eqref{kork} and \eqref{ek} we obtain
\begin{align}
  \P\bigpar{D>L+\CCx}
&\le \P \bigpar{\cE'}+ \P\bigpar{D>L+\CCx \text{ and }\cE''}
\notag\\&
\le \frac{\gl\eps}{\gl\chi-1-\eps}+\eps+o(1).
\end{align}
The constant $\CCx$ may depend on $\eps$, but it follows that
for large $n$,
\begin{align}
  \P\bigpar{D>(\gl+\eps)\log n}
\le
  \P\bigpar{D>L+\CCx}
\le \frac{\gl}{\gl\chi-1-\eps}\eps+\eps+o(1).
\end{align}
Since $\eps$ can be arbitrarily small, this shows that
for any $\gl>1/\chi$ and $\gd>0$,
\begin{align}\label{bull}
D\le(\gl+\gd)\log n
\qquad\text{\whp},  
\end{align}
which  together with \eqref{bill} completes the proof.
\end{proof}

We transfer this result from balls to vertices.

\begin{lemma}\label{Lsplit1}
Let $\ctn$ and $\xi\nn_i$ be as above. Then
\begin{align}\label{lsplit1}
  \frac{\dx(\xi\nn_1,o)}{\log n}\pto\frac{1}{\chi}.
\end{align}
\end{lemma}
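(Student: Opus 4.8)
The plan is to deduce \eqref{lsplit1} from the ball statement \refL{LSball} (which gives $D_n/\log n\pto1/\chi$ for the depth $D_n$ of a uniformly random ball) together with the size bounds $cn\le|\ctn|\le Cn$ of \refL{LS1}. Since, conditioned on $\ctn$, the vertex $\xi\nn_1$ is uniform and $|\ctn|\ge cn$ holds deterministically by \eqref{ls1b}, it suffices to prove that for every fixed $\eps>0$,
\begin{align*}
\E\bigabs{\set{v\in\ctn:\dx(v,o)>(1/\chi+\eps)\log n}}=o(n)
\qquad\text{and}\qquad
\E\bigabs{\set{v\in\ctn:\dx(v,o)<(1/\chi-\eps)\log n}}=o(n);
\end{align*}
indeed, then $\P\bigpar{|\dx(\xi\nn_1,o)-(1/\chi)\log n|>\eps\log n}\le \E|\set{\text{bad }v}|/(cn)\to0$, which is \eqref{lsplit1}.

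\emph{Upper tail.} Put $k:=\ceil{(1/\chi+\eps)\log n}$. The set $\set{v\in\ctn:\dx(v,o)\ge k}$ is the disjoint union, over the vertices $a\in\ctn$ of depth exactly $k$, of the fringe trees $\ctn^a$; conditioned on $\hN_a=m\ge1$, the tree $\ctn^a$ is a copy of $\cT_m$, so $\E\bigsqpar{|\ctn^a|\mid\hN_a}\le C\hN_a$ by \eqref{ls1a}. Moreover $\sum_{a:\dx(a,o)=k}\hN_a$ is precisely the number of balls that reach depth $k$, i.e.\ the number of balls with depth $\ge k$ in $\ctn$. Hence
\begin{align*}
\E\bigabs{\set{v\in\ctn:\dx(v,o)\ge k}}\le C\,\E\sum_{a:\dx(a,o)=k}\hN_a=Cn\,\P(D_n\ge k)=o(n)
\end{align*}
by \refL{LSball}.

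\emph{Lower tail.} Fix a large constant $T>C_0$ and put $k_0:=\floor{(1/\chi-\eps)\log n}$. Split the vertices of depth $<k_0$ according to whether $\hN_v\ge T$ or $\hN_v<T$; the first kind contributes expectation $\le Cn/T$ by \eqref{ls1c}. For the second, call $v\in\ctn$ \emph{light} if $\hN_v<T$. For $n>T$ the root is not light, and since $\hN$ is non-increasing along any root path, every light vertex lies in the fringe tree $\ctn^u$ of a unique \emph{light-root} $u$ (a light vertex whose parent is not light), and distinct light-roots have disjoint fringe trees. A light $v$ with $\dx(v,o)<k_0$ has its light-root $u$ at depth $<k_0$, so
$\bigabs{\set{v\in\ctn:\dx(v,o)<k_0,\ \hN_v<T}}\le\sum_{u}|\ctn^u|$,
the sum over light-roots $u$ with $\dx(u,o)<k_0$. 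Conditioning on the light-roots of depth $<k_0$ and their sizes, the $\ctn^u$ are independent copies of $\cT_{\hN_u}$, so by \eqref{ls1a} this sum has expectation $\le C\,\E\sum_u\hN_u$, which is $C$ times the expected number of balls whose root path meets a light-root of depth $<k_0$. That number is at most $n\,\P\bigpar{\hN_{v_{k_0-1}}<T}+CC_0n/T$, where $v_0,v_1,\dots$ is the path of a uniformly random ball (the $CC_0n/T$ term, bounded via \eqref{ls1c}, covers the few balls resting in a subtree with $\ge T$ balls). Finally, on $\set{\hN_{v_{k_0-1}}<T}$ the ball sits in a copy of $\cT_m$ with $m<T$ rooted at depth $k_0-1$, so $D_n\le k_0-1+\max_{1\le m<T}H(\cT_m)$ with each $H(\cT_m)<\infty$ a.s.; hence for fixed $\Lambda$, $\P(\hN_{v_{k_0-1}}<T)\le\P(D_n\le k_0+\Lambda)+\P(\max_{m<T}H(\cT_m)>\Lambda)$, and letting $n\to\infty$ and then $\Lambda\to\infty$ this tends to $0$ by \refL{LSball}. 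Combining the two kinds, $\E\bigabs{\set{v\in\ctn:\dx(v,o)<k_0}}\le o(n)+O(n/T)$; letting $T\to\infty$ gives $o(n)$, as needed.

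The routine part is the upper tail; the main obstacle is the lower tail, where one must exclude a positive fraction of vertices lying at depth below $(1/\chi-\eps)\log n$. The crux is the light-root decomposition, which converts ``vertices at small depth'' into ``balls whose fringe tree has already shrunk below size $T$ at small depth,'' reducing the estimate to \refL{LSball} plus the elementary tightness of the heights $H(\cT_m)$ for $m$ below a fixed bound. One must be careful that no spurious factor of $\log n$ (coming from the range of depths) creeps in — this is why it is essential to bound $\sum_u|\ctn^u|$ over the \emph{pairwise disjoint} light subtrees rather than naively summing over ancestors.
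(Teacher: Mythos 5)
Your argument is correct, and at the level of ingredients it is essentially the paper's own proof: the same reduction of \eqref{lsplit1} to showing that the expected numbers of vertices at depth above $(1/\chi+\eps)\log n$ and below $(1/\chi-\eps)\log n$ are $o(n)$ (using the deterministic bound \eqref{ls1b}), the identical upper-tail estimate via the disjoint fringe trees rooted at depth $k$, the conditional bound $\E\bigpar{|\ctn^a|\mid \hN_a}\le C\hN_a$ from \refL{LS1}, and \refL{LSball}. In the lower tail the paper likewise splits vertices according to whether $\hN_v$ exceeds a large constant, disposes of the heavy ones by \eqref{ls1c}, and stops the construction at the first light vertex on each root path (its ``green'' vertices are exactly your light-roots, with the depth restriction built in); the only real difference is the transfer to \refL{LSball}: the paper bounds the number of green vertices by twice the expected number of balls of depth at most $L+C$, using that each green fringe tree contains a shallow ball with probability at least $1/2$, whereas you bound the number of balls passing through light-roots by $n\P\bigpar{\hN_{v_{k_0-1}}<T}$ and estimate this probability by $\P(D_n\le k_0+\Lambda)+\P\bigpar{\max_{m<T}H(\cT_m)>\Lambda}$. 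Both transfers rest on the same two facts (\refL{LSball} plus the a.s.\ finiteness of the heights $H(\cT_m)$ for bounded $m$), so nothing is gained or lost; yours is ball-centric where the paper's is vertex-centric. One small bookkeeping remark: your extra term $CC_0n/T$ is justified as covering balls that rest in subtrees with at least $T$ balls, but the balls actually needing attention (those reaching a light-root yet coming to rest before depth $k_0-1$) rest in subtrees with fewer than $T$ balls; this is harmless, since with the convention $v_j:=v_D$ for $j>D$ (as in the paper's proof of \refL{LSball}) such balls already satisfy $\hN_{v_{k_0-1}}<T$, and in any case their expected number is at most $n\P(D_n<k_0-1)=o(n)$ directly by \refL{LSball}.
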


\begin{proof}
Again, let $L:=\floor{\gl\log n}$ for a fixed $\gl>0$.
 Let $\ZZV_k$  be the set of vertices of $\ctn$ with depth $k$,
and $\ZZB_k$ the set of balls with depth $k$;
define $\ZZV_{\le k}, \ZZV_{\ge k},\ZZB_{\le k}, \ZZB_{\ge k}$ analogously.

First,
let $\gl>1/\chi$.
Let $U_L$ be the set of all vertices of $\Tb$ with depth $L$.
For any $v\in \Tb$, conditioned on $\hN_v$, the fringe subtree $\ctn^v$
has the same distribution as $\cT_m$ with $m=\hN_v$.
Consequently, \refL{LS1} shows that
\begin{align}\label{ola}
  \E\bigpar{|\ctn^v|\mid\hN_v} \le C\hN_v
\end{align}
and thus
\begin{align}\label{olb}
  \E{|\ctn^v|} \le C\E\hN_v.
\end{align}

Since $\ZZV_{\ge L}$ is the union of the fringe trees $\ctn^v$ for $v\in U_L$, 
and $\ZZB_{\ge L}$ is the set of all balls that reach some vertex in $U_L$,
it follows from \eqref{olb}
that
\begin{align}\label{olc}
\E |\ZZV_{\ge L}| = \E\sum_{v\in U_L}|\ctn^v|
\le C\sum_{v\in U_L}\E \hN_v
= C\E\sum_{v\in U_L} \hN_v
=C\E|\ZZB_{\ge L}|
.\end{align}
However, we have by \refL{LSball},
\begin{align}\label{old}
  \E |\ZZB_{\ge L}| = n\P(D_n\ge L) = o(n).
\end{align}
Combining \eqref{olc} and \eqref{old} yields, recalling \eqref{ls1b}, 
\begin{align}\label{olf}
  \P\bigpar{\xi\nn_i \ge L}
= \E \frac{|\ZZV_{\ge L}|}{|\ctn|}
\le  C\E \frac{|\ZZV_{\ge L}|}{n} 
\le  C \frac{\E|\ZZB_{\ge L}|}{n} 
= o(1).
\end{align}

In the opposite direction,
let $\gl<1/\chi$.
Let $\eps>0$ and let $B$ be a large number.
We split $\ZZVL$ into the two sets
$\ZZVL':=\set{v\in \ZZV_{\le L}:\hN_v> B}$ and 
$\ZZVL'':=\set{v\in \ZZV_{\le L}:\hN_v\le  B}$.
By \eqref{ls1c}, we may choose $B$ so large that 
\begin{align}\label{noa}
\E|\ZZVL'| \le \eps n.
\end{align}

To treat $\ZZVL''$,
we now stop the construction of $\ctn$ at each vertex $v$ with
$\hN_v\le B$. If such a vertex also has depth $\le L$, we colour it green.
Let $\cG$ be the set of all green vertices. 
Then the set $\ZZV_{\le L}''$ is included in the union of the fringe trees
$\ctn^v$ 
for $v\in \cG$. Furthermore, 
conditioned on the set $\cG$ and $(\hN_v)_{v\in\cG}$,
each $\ctn^v$ (for $v\in\cG$)
has the same distribution as $\cT_m$ for $m=\hN_v$.
Thus, by \refL{LS1}, 
\begin{align}\label{olk}
\E\bigpar{|\ZZVL''|\mid\cG|, (\hN_v)_{v\in\cG}}
&\le \sum_{v\in\cG}\E\bigpar{ |\ctn^v|\mid \cG,(\hN_v)_{v\in\cG}}
\le \sum_{v\in\cG}C\hN_v
\notag\\&
\le CB|\cG| 
= C|\cG|
.\end{align}
Consequently, 
\begin{align}\label{oll}
  \E|\ZZVL''| \le C\E|\cG|.
\end{align}
Next, let again $\CCgran$ be such that \eqref{gran} holds, with $\eps$
replaced by 
$1/2$.
Then, still conditioned on $\cG$ and $(\hN_v)_{v\in\cG}$,
\eqref{gran} shows that each fringe tree $\ctn^v$ (for $v\in\cG$)
with probability $\ge1/2$ has height $\le \CCgran$;
if this happens, $\ctn^v$ has in particular at least
one ball of depth $\le \CCgran$ in the fringe tree, and thus depth 
$\le L+\CCgran$ in $\ctn$.
Hence,
\begin{align}
  \E\bigpar{|\ZZB_{\le L+\CCgran}|\mid\cG} \ge \tfrac12 |\cG|.
\end{align}
Together with \eqref{oll}, this yields
\begin{align}\label{olm}
\E|\ZZVL''| \le C\E|\cG|\le C\E|\ZZB_{\le L+\CCgran}|
\end{align}
and then \refL{LSball} implies
\begin{align}\label{olq}
\E|\ZZVL''| \le C\E|\ZZB_{\le L+\CCgran}|
= Cn\P\bigpar{D_n\le L+\CCgran}=o(n).
\end{align}
 
By \eqref{noa} and \eqref{olq}, we have for large $n$
\begin{align}
  \E|\ZZVL|
=
  \E|\ZZVL'|+  \E|\ZZVL''|\le 2\eps n.
\end{align}
Thus, 
$  \E|\ZZVL|=o(n)$, and, similarly to \eqref{olf},
\begin{align}\label{olg}
  \P\bigpar{\xi\nn_i \le L}
= \E \frac{|\ZZV_{\le L}|}{|\ctn|}
\le  C\E \frac{|\ZZV_{\le L}|}{n} 
= o(1).
\end{align}
This completes the proof together with \eqref{olf}.
\end{proof}

\begin{lemma}
  \label{LSB}
With notations as above,
\begin{align}\label{lsplit2x}
  \frac{\dx(\xi\nn_1\bmin\xi\nn_2,o)}{\log n}\pto0.
\end{align}
\end{lemma}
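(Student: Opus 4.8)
The plan is to prove the stronger statement that the sequence of random variables $d(\xi\nn_1\bmin\xi\nn_2,o)$ is bounded in $L^1$ uniformly in $n$ (hence tight); since $\log n\to\infty$, this immediately gives \eqref{lsplit2x}. Since $\E d(\xi\nn_1\bmin\xi\nn_2,o)=\sum_{k\ge1}\P\bigpar{d(\xi\nn_1\bmin\xi\nn_2,o)\ge k}$, it suffices to bound each of these probabilities by $C\gth^k$ for some constants $C$ and $\gth\in(0,1)$. The starting point is the observation that $d(\xi\nn_1\bmin\xi\nn_2,o)\ge k$ holds exactly when $\xi\nn_1$ and $\xi\nn_2$ lie in the same fringe tree $\ctn^v$ rooted at a vertex $v$ of depth $k$, and the fringe trees $\set{\ctn^v:d(v,o)=k}$ are pairwise disjoint; since $\xi\nn_1,\xi\nn_2$ are i.i.d.\ uniform,
\begin{align}\label{lsbident}
  \P\bigpar{d(\xi\nn_1\bmin\xi\nn_2,o)\ge k\mid\ctn}
=\sum_{v:\,d(v,o)=k}\Bigpar{\frac{|\ctn^v|}{|\ctn|}}^2 .
\end{align}
Using the deterministic lower bound \eqref{ls1b}, $|\ctn|\ge cn$, it follows from \eqref{lsbident} that $\P\bigpar{d(\xi\nn_1\bmin\xi\nn_2,o)\ge k}\le (cn)^{-2}\E\bigsqpar{\sum_{v:\,d(v,o)=k}|\ctn^v|^2}$, so everything is reduced to bounding this last expectation by $Cn^2\gth^k$.

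The next step is to pass from vertex counts to ball counts. Conditioned on $\hN_v=m$, the fringe tree $\ctn^v$ has the distribution of $\cT_m$, and the geometric stochastic domination of $|\cT_m|$ established in the proof of \refL{LS1} gives just as well the second--moment bound $\E\bigpar{|\ctn^v|^2\mid\hN_v}\le C\hN_v^2$. Summing over the conditionally independent level-$k$ fringe trees reduces the problem to showing $\E[Q_k]\le n^2\gth^k$, where $Q_k:=\sum_{v:\,d(v,o)=k}\hN_v^2$. Here $Q_k$ satisfies a contractive one-step recursion: conditioning on the configuration of balls at levels $\le k-1$, the splits performed at vertices of depth $k-1$ are independent, the one at a vertex $w$ being distributed as $\cN\nnx{\hN_w}$, so
\begin{align}\label{lsbrec}
\E\bigsqpar{Q_k\mid(\hN_w)_{d(w,o)\le k-1}}
&=\sum_{w:\,d(w,o)=k-1}\E\Bigsqpar{\sum_{i=1}^b\bigpar{N\nnx{\hN_w}_i}^2}
\notag\\
&\le\gth\sum_{w:\,d(w,o)=k-1}\hN_w^2=\gth\,Q_{k-1},
\end{align}
where the inequality uses $\sum_i (N\nnx m_i)^2\le(\max_i N\nnx m_i)\sum_i N\nnx m_i\le m\max_i N\nnx m_i$ together with $\E[\max_i N\nnx m_i]\le\gth m$. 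This last bound is where hypothesis \ref{ST3} enters, in the uniform form \eqref{st3c}: $\P\bigpar{\max_i N\nnx m_i>(1-\gd)m}\le 1-c$ forces $\E[\max_i N\nnx m_i]\le(1-\gd c)m$, so we may take $\gth:=1-\gd c\in(0,1)$. Iterating \eqref{lsbrec} from $Q_0=n^2$ yields $\E[Q_k]\le\gth^kn^2$, hence $\P\bigpar{d(\xi\nn_1\bmin\xi\nn_2,o)\ge k}\le C'\gth^k$, and summing over $k\ge1$ gives $\E\, d(\xi\nn_1\bmin\xi\nn_2,o)=O(1)$ uniformly in $n$, as required. (This refines, for vertices, the corresponding tightness statement for balls in \cite[Lemma 3.4]{AlbertH++}.)

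No single step is hard; the only real content is identifying where the decay comes from. The identity \eqref{lsbident}, the vertices-to-balls second moment, and the conditional independence of the level-$k$ subtrees are all routine bookkeeping, and the contraction factor $\gth<1$ per level is supplied entirely by the no-condensation hypothesis \ref{ST3} --- without \eqref{st3c} one gets only the trivial bound $\E[Q_k]\le n^2$. Thus the main point to be careful about is simply that the per-level estimate $\E[\max_i N\nnx m_i]\le\gth m$ holds with $\gth$ and the implicit coupling uniform in $m$, which is exactly the uniformity recorded in \refR{RST3}.
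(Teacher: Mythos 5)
Your proof is correct, but it takes a genuinely different route from the paper's. The paper fixes $K$, invokes the convergence $\hN_v/n\pto\VV_v$ of the level-$K$ ball proportions to the products of limit split-vector entries (citing \cite[Lemma 2]{Broutin+++}) together with the \whp{} bound \eqref{ls1whp}, and then lets \ntoo{} first: the probability that $d(\xi\nn_1\bmin\xi\nn_2,o)\ge K$ is bounded by $C\E\sum_{|v|=K}\VV_v^2+o(1)=C\bigpar{\sum_i\E V_i^2}^K+o(1)$, with the decay coming from \eqref{st3b} via $\sum_i\E V_i^2<1$. You instead stay entirely at finite $n$: the same starting identity for $\P(d\ge k\mid\ctn)$, the deterministic bound \eqref{ls1b}, a second-moment version $\E\bigpar{|\cT_m|^2}\le Cm^2$ of \refL{LS1} (which indeed follows from the geometric stochastic domination in its proof, though the lemma itself only states first-moment and \whp{} bounds), and then a one-step contraction $\E\bigsqpar{Q_k\mid(\hN_w)_{|w|\le k-1}}\le\gth Q_{k-1}$ for $Q_k=\sum_{|v|=k}\hN_v^2$, where $\gth=1-\gd c<1$ comes from the uniform no-condensation bound \eqref{st3c} of \refR{RST3} via $\E\max_i N\nnx m_i\le(1-\gd c)m$; all the conditional-independence and recursive-distribution facts you use are the same ones the paper uses elsewhere (\eg{} around \eqref{ola}). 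The payoff of your version is a bound $\P\bigpar{d(\xi\nn_1\bmin\xi\nn_2,o)\ge k}\le C\gth^k$ that is geometric in $k$ and uniform in $n$, hence $\E\,d(\xi\nn_1\bmin\xi\nn_2,o)=O(1)$, which is slightly stronger and more quantitative than the tightness the paper records in its closing parenthesis, and it avoids the external convergence lemma for $\hN_v/n$; the paper's argument is softer and shorter given that that convergence is already available. One point worth making explicit if you write this up is exactly the one you flag: the contraction constant must be uniform in $m$, which is supplied by \eqref{st3c} (including $m=1$, where \eqref{st3a} guarantees the ball stays at the root with positive probability).
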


\begin{proof}
There is a standard identification of the vertices of $\Tb$ with finite
strings $i_1\dotsm i_k$ with 
$k\ge0$ and $i_j\in\set{1,\dots,b}$. 
If $v=i_1\dotsm i_k\in \Tb$, let $v_j:=i_1\dotsm i_j$, $j\le k$,
and define
\begin{align}\label{VV}
  \VV_v:=\prod _{j=0}^{k-1} V\nnx{v_j}_{i_{j+1}}.
\end{align}
Then 
\cite[Lemma 2]{Broutin+++},
by \ref{ST2} and induction over $k$, as \ntoo,
\begin{align}\label{no1}
  \hN_v/n \pto \VV_v,
\qquad v\in \Tb.
\end{align}
Furthermore, it follows from \eqref{ls1whp} that for any $\eps>0$
and any fixed $v\in \Tb$, \whp
\begin{align}\label{no2}
  |\ctn^{v}|\le C \hN_v+\op(n).
\end{align}
(The term $\op(n)$ takes care of the possibility that $\hN_v$ is small; we
have not excluded the case $\VV_v=0$.)
By \eqref{no1} and \eqref{no2}, 
\begin{align}
  |\ctn^v|\le (C\VV_v+\op(1)) n
\end{align}
and thus, for any fixed $K$,
\begin{align}\label{brage}
\sum_{v\in U_K}    |\ctn^v|^2\le \sum_{v\in U_K}(C\VV_v+\op(1))^2 n^2
.\end{align}
Since $d(\xi\nn_1\bmin\xi\nn_2,o)\ge K)$
if and only if the two vertices $\xi\nn_1$ and $\xi\nn_2$ are in the same
subtree $\ctn^v$ for some $v\in U_K$, it follows from \eqref{brage} that, 
using also \eqref{ls1b} and $\sum_{v\in U_K}\VV_v=1$,
\begin{align}
\P\bigpar{ d\xpar{\xi\nn_1\bmin\xi\nn_2,o)\ge K\mid\ctn}}&
=\frac{1}{|\ctn|^2}\sum_{v\in U_K}    |\ctn^v|^2
\notag\\&
\le C\sum_{v\in U_K}(C\VV_v+\op(1))^2
\notag\\&
= C\sum_{v\in U_K}\VV_v^2+\op(1).
\end{align}
Since the probability on the \lhs{} is bounded by 1, 
we may assume that so is the term $\op(1)$ on the \rhs, 
and thus we may take the expectation and use dominated
convergence to conclude
\begin{align}\label{no3}
\P\bigpar{ d\xpar{\xi\nn_1\bmin\xi\nn_2,o}\ge K}
&
\le C\E\sum_{v\in U_K}\VV_v^2+o(1).
\end{align}
Furthermore, by the definition \eqref{VV} and independence,
\begin{align}\label{no4}
  \E\sum_{v\in U_K}\VV_v^2
=\sum_{i_1,\dots,i_K}\prod_{j=1}^K\E V_{i_j}^2
= \Bigpar{ \sum_{i=1}^b\E V_i^2}^K.
\end{align}
Since $\sum_i V_i^2\le\sum_i V_i= 1$, and strict inequality holds with positive
probability,
\begin{align}\label{no5}
   \sum_{i=1}^b\E V_i^2 =   \E \sum_{i=1}^bV_i^2<1.
\end{align}
Hence, given any $\eps>0$, we can find $K$ such that \eqref{no3} yields
\begin{align}\label{no7}
\P\bigpar{ d\xpar{\xi\nn_1\bmin\xi\nn_2,o)\ge K}}
\le C \frac{\eps}{2C}+o(1) <\eps
\end{align}
for large $n$.
In particular, \eqref{lsplit2x} follows. (In fact, we have proved that the
sequence $ d\xpar{\xi\nn_1\bmin\xi\nn_2,o}$ of random variables is tight.)
\end{proof}

\begin{proof}[Proof of \refT{Tsplit2}]
\refT{Tsplit2}
follows from \refLs{Lsplit1} and   \ref{LSB}
by \refT{Tlog}.
\end{proof}

\begin{remark}\label{RRT}
The random recursive tree
and preferential attachment trees are not split trees in the sense above,
since 
degrees are unbounded.
Nevertheless, if the definition above is generalized to allow $b=\infty$,
they too can be regarded as split trees, see \cite{SJ320}.
We conjecture that under suitable conditions, \refT{Tsplit2} extends to the
case $b=\infty$, but we have not pursued this.
(Random recursive trees and preferential attachment trees can be handled by
\refT{TCMJ} below instead.)
\end{remark}

\section{Crump--Mode--Jagers branching process trees}
\label{SCMJ}

A \emph{Crump--Mode--Jagers} (CMJ) branching process 
(see \eg \cite{Jagers})
is a continuous time
process,
where each individual gives birth to a (generally random) number of children
at arbitrary random times; 
the times a single individual gets children are thus
described by a point process $\Xi$ on $\ooo$. 
All individuals have independent and
identically distributed such point processes. 
We start with a single individual, born at time 0; we also suppose that the
CMJ process is supercritical and that it never dies out; hence its size
\as{} grows to $\infty$.

The family tree of the CMJ process is a growing random tree $\ctt$,
$t\ge0$, where the vertices are all individuals born up to time $t$.
We stop the tree at the stopping time $\tau(n)$ where the tree
first reaches 
$n$ vertices. Then (provided births \as{} occur at distinct times)
$\ctn:=\ctx{\tau(n)}$ is a random tree with fixed size
$|\ctn|=n$. More generally, $\tau(n)$ may be defined as the first time 
the total weight reaches $n$, where each individual has a weight
given by some ``characteristic'' $\psi$; 
see \cite{SJ306} for details. (For example, for an $m$-ary
search tree, $\psi$ counts the balls, and we stop when there are $n$ balls; 
\cf{} the split trees in \refS{Ssplit}.)

Many examples of such CMJ trees are discussed in the survey 
\cite[Sections 6--8]{SJ306};
these include for example binary search trees and $m$-ary search trees (also
covered by \refS{Ssplit}), and random recursive trees and preferential
attachment trees. We give in \refT{TCMJ} a general result for such trees.
For example, this applies to \refEs{EBST}--\ref{Ebinc}.

The point process $\Xi$ can informally be regarded as the random set
$\set{\xix_i}_{i=1}^N$ of the times of births $\xix_i$ of the children of the
root, where the number of children $N\in\set{0,1,\dots,\infty}$ in general
is random. 
(We use the notation $\xix_i$ to avoid confusion with the random vertices
$\xi\nn_i$.)
Formally, $\Xi$ is defined as the random measure
$\sum_{i=1}^N\gd_{\xix_i}$. Let $\mu:=\E\Xi$ denote the intensity measure of $\Xi$.

We define the Laplace transform of the measure $\mu$ on $\ooo$ by
\begin{equation}\label{Lm}
  \hmu(\gth) = \intoo e^{-\gth t}\mu(\ddx t)
=\E\intoo\sumiN e^{-\gth \xix_i}
,\qquad -\infty<\gth<\infty.
\end{equation}

As in \cite{SJ306}, we make the following assumptions;
see further \cite{SJ306}.
\begin{PXenumerate}{A}
\item \label{BPfirst}
$\mu\set0=\E\Xi\set0<1$. 
(This rules out a rather trivial case with explosions
already at the start.
In all examples in \cite{SJ306}, $\mu\set0=0$.)
\item \label{BPnonlattice}
$\mu$ is not concentrated on any lattice $h\bbZ$, $h>0$.
(This is for convenience only.)  
\item \label{BPsuper}
$\E N>1$. 
(This is known as  the \emph{supercritical} case.)
For simplicity, we further assume that $N\ge1$ a.s.
(In this case, every individual has at least one child, so the process never
dies out and $|\ctx\infty|=\infty$.)
\item \label{BPmalthus}
There exists a real number $\ga>0$ (the \emph{Malthusian parameter}) such that
$\hmu(\ga)=1$, \ie, 
\begin{equation}\label{malthus}
\intoo e^{-\ga t}\mu(\ddx t) =1.  
\end{equation}

\item \label{BPmub}
$\hmu(\gth)<\infty$ for some $\gth<\ga$.

\renewcommand{\labelenumi}{{\upshape{(A\arabic{enumi}$\psi$)}}}
\item \label{BPlast}
(Only needed if the stopping time $\tau(n)$ is defined using a
weight $\psi$. 
Thus void in the case that $\ctn$ always has $n$ vertices.)
The random variable $\sup_t \bigpar{e^{-\gth t}\psi(t)}$ has finite
expectation for 
some $\gth<\ga$.
\setcounter{oldenumi}{\value{enumi}}
\end{PXenumerate}

We assume also the following  technical condition.
(We conjecture that this is not necessary, but we use it in our proof.)
Define the random variable
\begin{align}\label{xia1}
  \Xia:=\sumiN e^{-\ga\xix_i}
\end{align}
  and note that \eqref{malthus} is equivalent to
\begin{align}\label{xia2}
\E\Xia=1.    
\end{align}
We assume a weak moment condition. 
\begin{PXenumerate}{A}
\setcounter{enumi}{\value{oldenumi}}
\item \label{BPLlogL}
We have
$
\E \bigsqpar{\Xia \log\Xia} <\infty.    
$
\end{PXenumerate}

\begin{remark}\label{Rlogl}
Note that \ref{BPLlogL} trivially holds if the outdegrees in $\ctn$ are
bounded, so $N\le C$ \as{} for some $C\le\infty$. It is also easily seen
that
\ref{BPLlogL} holds, as a consequence of
the stronger
$\E \bigsqpar{\Xia^2} <\infty$,
for random recursive trees and the linear preferential attachment trees
in \cite[Section 6]{SJ306}.
\end{remark}

We let for convenience $Z_t:=|\ctt|$, and similarly $Z_t^v:=|\ctt^v|$ for
fringe trees. We also define
\begin{equation}\label{el}
\gb:=\intoo te^{-\ga t}\mu(\ddx t) <\infty
.\end{equation}

\begin{remark}\label{RW}
\citet{Nerman} showed that
under the assumptions \ref{BPfirst}--\ref{BPlast} above, 
there exists a random
variable $W$ such that, as \ttoo,
\begin{equation}\label{olle1}
  e^{-\ga t}Z_t \asto W.
\end{equation}
If furthermore \ref{BPLlogL} holds, then
$W>0$ \as{} and
\begin{equation}  \label{EW}
\E W=(\ga\gb)\qw.
\end{equation}
However, if \ref{BPLlogL} fails, then $W=0$ \as.
See also \cite{Doney}.
\end{remark}

\begin{theorem}
  \label{TCMJ}
Assume
\ref{BPfirst}--\ref{BPlast} and \ref{BPLlogL}.
Then
\begin{align}\label{bpa}
  \frac{1}{\log n}\ctn \pto\OO_{1/(\ga\gb)}.
\end{align}
\end{theorem}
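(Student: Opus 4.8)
By \refT{Tlog}, it suffices to establish the two estimates \eqref{olof} and \eqref{twin} with $a=1/(\ga\gb)$, \ie{}
\begin{align}
  \frac{d(\xi\nn_1,o)}{\log n}\pto\frac{1}{\ga\gb}
\qquad\text{and}\qquad
  \frac{d(\xi\nn_1\bmin\xi\nn_2,o)}{\log n}\pto0,
\end{align}
where $(\xi\nn_i)_i$ are \iid{} uniform vertices in $\ctn=\ctx{\tau(n)}$. The plan is to reduce both statements to known results about the CMJ process $\ctt$ at a deterministic time. Since $\tau(n)\approx\frac{1}{\ga}\log n$ (indeed $e^{-\ga\tau(n)}Z_{\tau(n)}\to W$ by \eqref{olle1} and $Z_{\tau(n)}\approx n$, so $\tau(n)=\frac{1}{\ga}\log n-\frac1\ga\log W+o(1)$, and we may couple $\ctn$ with $\ctt$ at $t=\tau(n)$), it is enough to understand the depth of a uniformly random individual in $\ctt$ for $t$ large, and show that with high probability this depth is concentrated around $t/\gb$. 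Combined with $\tau(n)\sim\frac1\ga\log n$ this gives $d(\xi\nn_1,o)/\log n\pto \frac{1}{\ga}\cdot\frac1\gb$, as required.

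The core of the argument is the first-order asymptotics of the \emph{generation} (graph depth) of a random individual born by time $t$. This is precisely the content of the theory of Nerman and of general CMJ convergence theorems: if $Z_t(k)$ denotes the number of individuals of generation $k$ born by time $t$, one has $e^{-\ga t}Z_t(k)$-type asymptotics, and the ``stable generation'' result (going back to work building on \cite{Nerman}, and used in this precise form in many papers on logarithmic trees — \eg{} the references cited in \refEs{EBST}--\ref{Ebinc}) states that a uniformly chosen individual has generation $(1+o_{\mathrm p}(1))\,t/\gb$, where $\gb$ is the mean of the ``generation increment'' under the Malthusian tilting, \ie{} \eqref{el}: $\gb=\intoo te^{-\ga t}\mu(\dd t)$. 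I would invoke this (or re-derive it via a renewal/branching-random-walk argument: the depth of the individual born at a given time behaves like a renewal process with inter-arrival law $e^{-\ga t}\mu(\dd t)$, whose mean is $\gb$, so the renewal theorem gives depth $\sim t/\gb$, and a second-moment bound controls the spread). Substituting $t=\tau(n)=\frac1\ga\log n+O_{\mathrm p}(1)$ yields \eqref{olof}.

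For \eqref{twin} I would argue as in \refL{LSB} and \refE{EBn}: $d(\xi\nn_1\bmin\xi\nn_2,o)\ge K$ exactly when $\xi\nn_1$ and $\xi\nn_2$ lie in a common fringe subtree $\ctn^v$ with $v$ at depth $K$. Conditioned on $\ctn$, this has probability $\sum_{v:\,d(v,o)=K}(|\ctn^v|/n)^2$. Now $|\ctn^v|=Z^v_{\tau(n)-\sigma_v}$ where $\sigma_v$ is the birth time of $v$, and by \eqref{olle1} applied to the subtree, $|\ctn^v|/n\approx e^{-\ga\sigma_v}W_v/W$ for some \iid{} copies $W_v$ of $W$. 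The sum $\sum_{d(v,o)=K}e^{-\ga\sigma_v}$ has, by the many-to-one lemma, expectation $\bigpar{\hmu(\ga)}^K=1$ for each $K$ (it is in fact a nonnegative martingale in $K$), but the \emph{sum of squares} $\sum_{d(v,o)=K}e^{-2\ga\sigma_v}$ has expectation $\bigpar{\hmu(2\ga)}^K$, and since $\hmu$ is strictly decreasing (as $\mu$ is not a point mass at $0$ by \ref{BPfirst} and $N\ge1$ with $\E N>1$), $\hmu(2\ga)<\hmu(\ga)=1$, so this is exponentially small in $K$. Hence $\P(d(\xi\nn_1\bmin\xi\nn_2,o)\ge K)\le C\,\hmu(2\ga)^K+o(1)$, giving tightness of $d(\xi\nn_1\bmin\xi\nn_2,o)$ and a fortiori \eqref{twin}.

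The main obstacle is making the concentration of the random depth rigorous and uniform, \ie{} proving \eqref{olof} rather than just the first-moment statement; this is where \ref{BPLlogL} (the $L\log L$ condition ensuring $W>0$ a.s.\ via Nerman's theorem, so that $\tau(n)=\frac1\ga\log n+O_{\mathrm p}(1)$ rather than $\tau(n)\gg\log n$) and \ref{BPmub} (a moment condition giving Nerman's renewal-theoretic asymptotics and, via a Chernoff/second-moment bound, the concentration of the generation count around $t/\gb$) are used. I would either cite the quenched random-generation asymptotics from the CMJ literature directly, or spell out a renewal-theory proof: bound $\P\bigpar{\text{random individual has depth}<(1-\eps)t/\gb}$ and $\P\bigpar{\text{depth}>(1+\eps)t/\gb}$ by comparing the generation-stratified counts $Z_t(k)$, summed over $k$ outside $[(1-\eps)t/\gb,(1+\eps)t/\gb]$, against $Z_t$, using $\E Z_t(k)\approx$ (renewal mass of $k$ arrivals by time $t$) $\cdot\, e^{\ga t}$ and a large-deviation estimate for the underlying renewal process with inter-arrival mean $\gb$; the contribution of bad generations is then $o(e^{\ga t})=o(Z_t)$ whp, which is exactly \eqref{olof} after substituting $t=\tau(n)$.
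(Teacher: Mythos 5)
Your overall strategy coincides with the paper's: reduce \eqref{bpa} via \refT{Tlog} to the depth statement \eqref{olof} and the common-ancestor statement \eqref{twin}, and for \eqref{olof} defer to (or re-derive) the known CMJ depth asymptotics; the paper does exactly this, citing \cite[Theorem 13.61]{SJ306-arxiv} (which rests on Nerman and Biggins). The divergence, and the problem, is in \eqref{twin}. The quantity you must control is not $\sum_{|v|=K}e^{-2\ga\sigma_v}$ but the conditional probability $\sum_{|v|=K}\bigpar{|\ctn^v|/n}^2$, which after applying \eqref{olle1} to each fringe tree is asymptotically $\sum_{|v|=K}e^{-2\ga\sigma_v}W_v^2/W^2$. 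Converting your many-to-one computation $\E\sum_{|v|=K}e^{-2\ga\sigma_v}=\hmu(2\ga)^K$ into the asserted bound $\P\bigpar{d(\xi\nn_1\bmin\xi\nn_2,o)\ge K}\le C\,\hmu(2\ga)^K+o(1)$ requires controlling the random factors $W_v^2/W^2$. A direct expectation computation would produce $\E[W^2]\,\hmu(2\ga)^K$ (even after restricting to an event where $W$ is bounded below), but the theorem only assumes the $L\log L$ condition \ref{BPLlogL}, so $\E W^2$ may be infinite; and $W^{-2}$ is unbounded as well. So the key inequality does not follow as written. (A second, more minor, omission: passing from \eqref{olle1} for each fixed fringe tree to the sum over all $|v|=K$ involves an interchange of limit and an infinite sum, which the paper justifies carefully via Fatou and an $\ell^1$-convergence step, see \eqref{bpe}--\eqref{bpkkk}.)

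The gap is repairable inside your framework by truncation: split the sum according to $W_v\le M$ or $W_v>M$; on the event $\set{W\ge\gd}$ the first part is at most $\gd^{-2}M^2\sum_{|v|=K}e^{-2\ga\sigma_v}$, which your $\hmu(2\ga)^K$ bound handles (your observation that $\hmu(2\ga)<1$ is correct, using \ref{BPfirst} and $\hmu(\ga)=1$), while the second part is at most $W^{-1}\sum_{|v|=K}e^{-\ga\sigma_v}W_v\indic{W_v>M}$, whose numerator has expectation $\E\bigsqpar{W\indic{W>M}}\to0$ as $M\to\infty$ because $\E W<\infty$; combined with $W>0$ \as{} (which is where \ref{BPLlogL} enters) this gives tightness of $d(\xi\nn_1\bmin\xi\nn_2,o)$ and hence \eqref{twin}. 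The paper instead avoids second moments of $W$ altogether: it proves the conservation identity $\sum_{|v|=k}e^{-\ga b_v}W_v=W$ \as, identifies the limit of the conditional probability as $\sum_{|v|=k}Y_v^2$, and shows that the decreasing sequence $Q_k=\sum_{|v|=k}e^{-2\ga b_v}W_v^2$ has \as{} limit $0$ by a fixed-point/independence argument using only $\E W<\infty$ and $\E\Xia=1$. With the truncation fix, your route would be a legitimate and arguably more standard alternative; as submitted, the decisive probabilistic step for \eqref{twin} is missing.
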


\begin{proof}
  It is shown in \cite[Theorem 13.61]{SJ306-arxiv},
using results by \citet{Nerman} and \citet{Biggins95,Biggins97},
that \eqref{olof} holds with $a:=1/(\ga\gb)$.
Hence, by \refT{Tlog}, it remains only to verify \eqref{twin}.
We argue similarly as for \refL{LSB}.

We regard $\ctt$ as a subtree of the infinite tree $\Too$,
where the vertices are all finite strings $i_1\dotsm i_k$ of natural
numbers $i_j\in \bbN$, with $0\le k<\infty$; thus the children of $v$ are
$vi$, for $i=1,\dots$, in this order.
Note that the length of the string labelling $v\in\Too$ equals $d(v,o)$; we
denote this length by $|v|$.

For  a vertex $v\in\Too$, let $\bb_v$ be the time that $v$ is born
in our CMJ branching process; if $v$ never appears, 
then $\bb_v:=\infty$.

The fringe tree $\ctt^v$ (defined as $\emptyset$ if $\bb_v>t$ so $v\notin\ctt$)
is from the time $\bb_v$ on a copy of the entire branching process tree, and
thus \eqref{olle1} implies that for every $v$ with $\bb_v<\infty$,
\begin{align}\label{bp2}
  e^{-\ga(t-\bb_v)}Z_t^v\asto W_v,\qquad\ttoo,
\end{align}
where $W_v\eqd W$ is independent of $\bb_v$. Thus
\begin{align}\label{bp3}
  e^{-\ga t}Z_t^v\asto e^{-\ga\bb_v}W_v, \qquad\ttoo,
\end{align}
which holds trivially also for $\bb_v=\infty$ (with
$e^{-\infty}=0$). Consequently, 
\begin{align}\label{bp4}
\frac{Z_t^v}{Z_t}=\frac{e^{-\ga t}Z_t^v}{e^{-\ga t}Z_t}
\asto  \frac{e^{-\ga\bb_v}W_v}W=: Y_v, \qquad\ttoo.
\end{align}

Consider first the children of the root; these are labelled with $i\in\bbN$.
Since $Z_t=1+\sum_i Z_t^i$, we have by \eqref{bp4} and (the elementary)
Fatou's lemma for sums, \as,
\begin{align}\label{bpe}
  \sum_i Y_i
=\sum_i \liminf_\ttoo \frac{Z_t^i}{Z_t}
\le \liminf_\ttoo \sum_i \frac{Z_t^i}{Z_t}
=1.
\end{align}
Equivalently, by \eqref{bp4},
\begin{align}\label{bc1}
\sum_i{e^{-\ga\bb_i}W_i}\le W, \qquad\textas
\end{align}
Furthermore, by  \eqref{xia1}--\eqref{xia2}, 
noting that $\bb_i=\xix_i$,
\begin{align}\label{bpf}
%
\E \sum_i {e^{-\ga\bb_i}W_i }
&=
 \sum_i \E\bigsqpar{e^{-\ga\bb_i}W_i }
= \sum_i \E\bigsqpar{e^{-\ga\bb_i}}\E \sqpar{W_i}
\notag\\&
= \E [W]\sum_i \E{e^{-\ga\bb_i}}
= \E [W]\E\sum_i {e^{-\ga\bb_i}}
=\E [W]\E\Xia
\notag\\&
=\E W.
\end{align}
By \eqref{EW}, 
$\E W<\infty$ and thus
\eqref{bc1} and \eqref{bpf} imply
\begin{align}\label{bc2}
\sum_i{e^{-\ga\bb_i}W_i}= W, \qquad\textas
\end{align}
Equivalently, there is \as{} equality in \eqref{bpe}.

Let $v\in\Too$ and apply \eqref{bc2} to the fringe tree $\ctt^v$, 
again regarded as a copy of the original branching process; this shows that if
$b_v<\infty$, then
\begin{align}\label{bc3}
\sum_i{e^{-\ga(\bb_{vi}-\bb_v)}W_{vi}}= W_v, \qquad\textas
\end{align}
and thus
\begin{align}\label{bc4}
\sum_i{e^{-\ga\bb_{vi}}W_{vi}}= e^{-\ga\bb_v}W_v, \qquad\textas,
\end{align}
where \eqref{bc4} trivially holds also if $b_v=\infty$.

By \eqref{bc4} and induction we conclude that for every $k\ge0$,
\begin{align}\label{bc5}
  \sum_{|v|=k}{e^{-\ga\bb_{v}}W_{v}}= W, \qquad\textas
\end{align}
Equivalently, by the definition \eqref{bp4} again,
\begin{align}\label{bpm}
  \sum_{|v|=k}{Y_{v}}= 1, \qquad\textas
\end{align}

Next, fix an integer $k$.
Two vertices $v$ and $w$ of $\ctt$ have $d(v\bmin w,o)\ge k$ if and only if
they belong to the same subtree $\ctt^v$ for some $v$ with $|v|=k$.
Thus, if $\xi\nnx t_j$ are \iid uniformly random vertices in $\ctt$,
\begin{align}\label{bpr}
  \P\bigpar{d(\xi\nnx t_1\bmin\xi\nnx t_2,o)\ge k\mid\ctt}
= \sum_{|v|=k}\Bigparfrac{Z_t^v}{Z_t}^2
.\end{align}
By \eqref{bpm} and Fatou's lemma as in \eqref{bpe}, \as,
\begin{align}\label{bps}
  1=\sum_{|v|=k}\YY_v
\le \liminf_\ttoo\sum_{|v|=k}\frac{Z_t^v}{Z_t}
\le \limsup_\ttoo\sum_{|v|=k}\frac{Z_t^v}{Z_t}
\le 1,
\end{align}
and thus
\begin{align}\label{bpt}
\sum_{|v|=k}\frac{Z_t^v}{Z_t}\asto1.
\end{align}
This together with \eqref{bp4} and \eqref{bpm} implies by a standard
argument,
\cf{} again \cite[Theorem 5.6.4]{Gut},
\begin{align}\label{bpkk}
\sum_{|v|=k}\lrabs{  \frac{Z_t^v}{Z_t}- \YY_v}\asto0.
\end{align}
Hence,
\begin{align}\label{bpkkk}
\sum_{|v|=k}\lrabs{  \Bigparfrac{Z_t^v}{Z_t}^2- \YY_v^2}
\le
\sum_{|v|=k}\lrabs{  \frac{Z_t^v}{Z_t}- \YY_v}\asto0
\end{align}
and thus \eqref{bpr} implies
\begin{align}\label{bpu}
  \P\bigpar{d(\xi\nnx t_1\bmin\xi\nnx t_2,o)\ge k\mid\ctt}
\asto \sum_{|v|=k}\YY_v^2
.\end{align}
By considering the sequence of times $\tau(n)$, this shows
\begin{align}\label{bpv}
  \P\bigpar{d(\xi\nn_1\bmin\xi\nn_2,o)\ge k\mid\ctn}
\asto \sum_{|v|=k}\YY_v^2
.\end{align}
Taking the expectation yields, by dominated convergence, 
\begin{align}\label{bpw}
  \P\bigpar{d(\xi\nn_1\bmin\xi\nn_2,o)\ge k}
\to \E\sum_{|v|=k}\YY_v^2
.\end{align}

We want to show that the \rhs{} of \eqref{bpw} tends to 0 as \ktoo.
Define, for $k\ge0$,
\begin{align}\label{bc10}
  Q_k:=\sum_{|v|=k}\bigpar{e^{-\ga b_v}W_v}^2
=\sum_{|v|=k}e^{-2\ga b_v}W_v^2
= W^2 \sum_{|v|=k}Y_v^2
.\end{align}
By \eqref{bc4}, \as,
\begin{align}\label{bc11}
  W^2=Q_0\ge Q_1\ge Q_2 \ge\dots
\end{align}
Define 
\begin{align}\label{bc12}
  Q_\infty:=\lim_{\ktoo} Q_k 
.\end{align}
Similarly, for each $i\in\bbN$ with $b_i<\infty$,
consider the fringe tree $\ctt^i$, and define 
\begin{align}\label{bc13k}
  Q_{k;i}&:=\sum_{|v|=k} e^{-2\ga(b_{iv}-b_{i})} W_{iv}^2,
\\
Q_{\infty;i}&:=\lim_\ktoo Q_{k;i}
\eqd Q_\infty
\label{bc13oo}
.\end{align}
For convenience, we define $Q_{\infty:i}$ also when $b_i=\infty$, as some 
copy of $Q_\infty$ independent of everything else.
Then, \eqref{bc13k} and \eqref{bc10} yield, for any $k\ge0$,
\begin{align}\label{bc14}
  Q_{k+1}=\sumi e^{-2\ga b_i} Q_{k;i}.
\end{align}
Letting $k\to\infty$ in \eqref{bc14}, we obtain by dominated convergence, since
$Q_{k;i}\le Q_{0;i}$ and $\sum_i e^{-2\ga b_i} Q_{0;i}=Q_1\le
Q_0=W^2<\infty$ \as,
\begin{align}\label{bc15}
  Q_{\infty}=\sumi e^{-2\ga b_i} Q_{\infty;i}
\qquad\textas
\end{align}

We claim that $Q_\infty=0$ \as.
To see this note first that \eqref{bc15} implies
\begin{align}\label{bc44}
  Q_{\infty}\qq\le\sumi e^{-\ga b_i} Q_{\infty;i}\qq
\qquad\textas,
\end{align}
with strict inequality as soon as there is more than one non-zero term in
the sum.
Moreover,
since $b_i$ and $Q_{\infty;i}$ are independent,
using \eqref{xia1}--\eqref{xia2} again,
\begin{align}\label{bc0}
  \E \sumi e^{-\ga b_i} Q_{\infty;i}\qq
&
=  \sumi\E\bigsqpar{ e^{-\ga b_i} Q_{\infty;i}\qq}
=  \sumi\E\bigsqpar{ e^{-\ga b_i}}\E\bigsqpar{ Q_{\infty;i}\qq}
\notag\\&
= \E\bigsqpar{ Q_{\infty}\qq} \sumi\E\bigsqpar{ e^{-\ga b_i}}
= \E\bigsqpar{ Q_{\infty}\qq} \E\bigsqpar{\Xia}
\notag\\&
= \E{ Q_{\infty}\qq}
.\end{align}
Furthermore, $\E Q_\infty\qq\le \E W<\infty$.
Hence, 
\eqref{bc0} implies
\begin{align}\label{bc00}
  \E\Bigpar{ \sumi e^{-\ga b_i} Q_{\infty;i}\qq
- Q_{\infty}\qq}
&
=
\E{ Q_{\infty}\qq} - \E{ Q_{\infty}\qq}
=0
,\end{align}
and thus there is equality in \eqref{bc44} \as.

Suppose that $\P(Q_\infty>0)>0$. 
Conditioned on the offspring $\Xi$ of the root, the 
fringe trees $\ctt^i$, $i \le N$, are independent copies of $\ctt$. Hence,
the events $N\ge 2$, $Q_{\infty;1}>0$ and $Q_{\infty;2}>0$ are independent 
and thus with positive probability they occur together, and then there is
strict inequality in \eqref{bc44}.
This contradiction shows that $Q_\infty=0$ \as.

Consequently,
\eqref{bc10} shows that, since  $W>0$ \as{},
\begin{align}\label{ad1}
  \sum_{|v|=k} Y_v^2 = W\qww Q_k \asto W\qww Q_\infty = 0,
\qquad \ktoo.
\end{align}
Furthermore, $  \sum_{|v|=k} Y_v^2 \le1$ by \eqref{bpm} or \eqref{bpv}.
Hence, by dominated convergence,
\begin{align}\label{ad2}
\E  \sum_{|v|=k} Y_v^2 \to 0,
\qquad \ktoo.
\end{align}

Finally, \eqref{bpw} and \eqref{ad2} show that
\begin{align}
  \lim_{\ktoo}\lim_{\ntoo}
  \P\bigpar{d(\xi\nn_1\bmin\xi\nn_2,o)\ge k}
=0,
\end{align}
which shows that the sequence of random variables 
$d(\xi\nn_1\bmin\xi\nn_2,o)$ is tight, and in particular that \eqref{twin}
holds.
\end{proof}

\section{Proof of \refT{ET1}}\label{SpfET1}

\refT{ET1} is stated in \cite[Theorem 1]{ET} for uniformly bounded rescaled
finite trees. Furthermore, \cite[Theorem 4]{ET} contains a related statement
(for measured real trees); we show that it implies \refT{ET1}.

\begin{proof}[Proof of \refT{ET1}]
This is the only place in the present paper where we use the machinery 
with ultraproducts
used
in \cite{ET} to prove the results there. 
We refer to \cite{ET} for definitions and basic properties, and will here
only give the additional arguments needed.
We
fix, as in \cite{ET}, an ultrafilter $\go$ on $\bbN$. All ultralimits and
ultraproducts are defined using $\go$.

Let $(T_n)\xoo=(T_n,d_n,\mu_n)\xoo$ be a convergent sequence of measured
real trees. Thus \eqref{d2} holds for some measures $\xnu_r\in \cP(M_r)$.

Taking $r=2$ in \refD{D2}, 
we see by \eqref{taurus} and \eqref{rhor} that, in particular, 
\begin{align}\label{greta}
  d_n(\xi_1\nn,\xi_2\nn) \dto \zeta,
\end{align}
for some random variable $\zeta$.
It follows from \eqref{greta} that the sequence of random variables 
$  d_n(\xi_1\nn,\xi_2\nn)$ is tight, \ie, that for every $\ep>0$, there
exists a constants $\Ce$
such that for every $n$
\begin{align}\label{hans}
\P\bigpar{  d_n(\xi_1\nn,\xi_2\nn)>\Ce} \le\eps.
\end{align}
Fix $\eps>0$.
By \eqref{hans} and Fubini's theorem, there exists $x_n\in T_n$ such that
\begin{align}\label{ego}
\P\bigpar{  d_n(\xi_1\nn,x_n)>\Ce} \le\eps.
\end{align}
Let $A_n:=\set{x\in T_n:d_n(x,x_n)\le\Ce}$. Then \eqref{ego} says
\begin{align}\label{bmo}
\mu_n(A_n)\ge 1-\eps.  
\end{align}

As in \cite{ET}, form the ultraproduct $\QT:=\prod_\go T_n$, and equip it
with the pseudometric $\Qd:= \lim_\go d_n$ (which may take the value $+\infty$)
and the probability measure
$\Qmu:=\prod_\go \mu_n$.
Let $\Qx:=[(x_n)_n]\in\QT$ and $\Q{A}:=\prod_\go A_n\subseteq\QT$. 
For any $\Q{y}\in \Q{A}$,
$\Q{y}=[(y_n)_n]$ for some $y_n\in T_n$ 
with $y_n\in A_n$ and thus $d_n(x_n,y_n)\le \Ce$ for every $n$;
hence
\begin{align}\label{ada}
  \Qd(\Q{x},\Q{y})=\lim_\go d_n(x_n,y_n) \le \Ce.
\end{align}
Furthermore, by \eqref{bmo},
\begin{align}\label{ida}
  \Qmu(\Q A)=\lim_\go\mu_n(A_n)\ge 1-\eps.
\end{align}
Let $\Q{X}:=B(\Qx,\infty):=\set{\Q{y}:\Qd(\Q{y},\Qx)<\infty}$.
Then \eqref{ada} shows that $\Q{A}\subset \Q{X}$, and thus \eqref{ida}
shows
\begin{align}\label{eda}
  \Qmu\xpar{\Q X}\ge \Qmu\xpar{\Q A} \ge 1-\eps.
\end{align}
(It is shown in \cite{ET} that $\Q X$ is $\Qmu$-measurable.)
Here $\Qx=\Qx(\eps)$ and $\Q X=\Q X(\eps)$ may depend on $\eps$. 
However, two infinite balls $B(\Qx_1,\infty)$ and $B(\Qx_2,\infty)$ in $\QT$
either coincide or are disjoint. (Such infinite balls are called
\emph{clusters} in \cite{ET}.)
Hence, considering only $\eps\le\frac12$, it follows from \eqref{eda} that
all $\Q X(\eps)$ coincide, and consequently form a cluster $\Q X$ with,
using \eqref{eda} again,
$\Qmu(\Q X)=1$.

This means that the sequence $(T_n,d_n,\mu_n)_n$ is 
\emph{essentially  bounded}, in the terminology of \cite{ET}.
Consequently, \cite[Theorem 4]{ET} applies, and shows that
  \begin{align}\label{yda}
\lim_\go \tau_r(T_n)=    \lim_\go \tau_r(T_n,d_n,\mu_n) = \tau_r(D),
  \end{align}
for every $r\ge1$ and some long dendron $D$ (constructed from the
ultraproduct $\QT$ in a way that we do not have to consider further).

On the other hand, we have assumed \eqref{d2}, so the sequence 
$\tau_r(T_n)=\tau_r(T_n,d_n,\mu_n)$  converges.
A convergent sequence has its limit as its ultralimit; hence \eqref{yda} and
\eqref{d2} yield $\tau_r(D)=\gl_r$. Consequently, \eqref{d2} says
\begin{align}
  \tau_r(T_n)\to \tau_r(D),
\qquad r\ge1,
\end{align}
and thus $T_n\to D$, which completes the proof.
\end{proof}

\begin{remark}
The proof shows that a tight sequence $(T_n)_n$ 
of measured real trees is essentially
bounded.
The converse does not hold, since we may let $T_n$ be arbitrary along some
subsequences without affecting the ultraproduct  and ultralimits, and thus
the property of being essentially bounded. Nevertheless, a sequence $(T_n)_n$
such that every subsequence is essentially bounded is tight
(as a consequence of \cite[Theorem 4]{ET}).
Similarly, a sequence is tight if and only if it is essentially bounded for
every ultrafilter $\go$.
\end{remark}

\section*{Acknowledgement}
I thank Cecilia Holmgren for help with references.

\newcommand\AAP{\emph{Adv. Appl. Probab.} }
\newcommand\JAP{\emph{J. Appl. Probab.} }
\newcommand\JAMS{\emph{J. \AMS} }
\newcommand\MAMS{\emph{Memoirs \AMS} }
\newcommand\PAMS{\emph{Proc. \AMS} }
\newcommand\TAMS{\emph{Trans. \AMS} }
\newcommand\AnnMS{\emph{Ann. Math. Statist.} }
\newcommand\AnnPr{\emph{Ann. Probab.} }
\newcommand\CPC{\emph{Combin. Probab. Comput.} }
\newcommand\JMAA{\emph{J. Math. Anal. Appl.} }
\newcommand\RSA{\emph{Random Structures Algorithms} }
\newcommand\DMTCS{\jour{Discr. Math. Theor. Comput. Sci.} }

\newcommand\AMS{Amer. Math. Soc.}
\newcommand\Springer{Springer-Verlag}
\newcommand\Wiley{Wiley}

\newcommand\vol{\textbf}
\newcommand\jour{\emph}
\newcommand\book{\emph}
\newcommand\inbook{\emph}
\def\no#1#2,{\unskip#2, no. #1,} 
\newcommand\toappear{\unskip, to appear}

\newcommand\arxiv[1]{\texttt{arXiv}:#1}
\newcommand\arXiv{\arxiv}

\def\nobibitem#1\par{}

\end{document}